\newtheorem{thm}{Theorem}[section]
\newtheorem{lem}[thm]{Lemma}
\newtheorem{cor}[thm]{Corollary}
\newtheorem{rem}[thm]{Remark}
\newtheorem{prp}[thm]{Proposition}
\newtheorem{ex}[thm]{Example}
\newcommand{\inte}{{\mathrm{int}}\,}
\newcommand{\cl}{{\mathrm{cl}}\,}
\newcommand{\conv}{{\mathrm{conv}}\,}
\newcommand{\ee}{\varepsilon}
\newcommand{\R}{\mathbb{R}}
\newcommand{\N}{\mathbb{N}}
\newcommand{\K}{\mathcal{K}}
\newcommand{\Cpm}{C_{\varphi,\mu}}
\newcommand{\Cppi}{C_{\varphi,\pi_K}}
\newcommand{\Cppip}{C_{\varphi,\pi_K^+}}
\newcommand{\Cpg}{C_{\varphi,\gamma_K}}
\newcommand{\Cpgp}{C_{\varphi,\gamma_K^+}}
\newcommand{\Cpmi}{C_{\varphi,\mu_i}}
\newcommand{\CpAm}{C_{\varphi,A\mu}}
\newcommand{\Cpmo}{C_{\varphi,\mu_0}}
\begin{document}
\hfill\today
\bigskip

\title[THE ORLICZ-BRUNN-MINKOWSKI THEORY]{THE ORLICZ-BRUNN-MINKOWSKI THEORY: A GENERAL FRAMEWORK, ADDITIONS, AND INEQUALITIES}

\author[Richard J. Gardner, Daniel Hug, and Wolfgang Weil]
{Richard J. Gardner, Daniel Hug, and Wolfgang Weil}
\address{Department of Mathematics, Western Washington University,
Bellingham, WA 98225-9063,USA} \email{Richard.Gardner@wwu.edu}
\address{Karlsruhe Institute of Technology, Department of Mathematics,
D-76128 Karlsruhe, Germany}
\email{daniel.hug@kit.edu}
\address{Karlsruhe Institute of Technology, Department of Mathematics,
D-76128 Karlsruhe, Germany} \email{wolfgang.weil@kit.edu}
\thanks{First author supported in
part by U.S.~National Science Foundation Grant DMS-1103612 and by the Alexander von Humboldt Foundation}
\subjclass[2010]{Primary: 52A20, 52A30; secondary: 52A39, 52A41} \keywords{compact convex set, Brunn-Minkowski theory, Orlicz-Brunn-Minkowski theory, Minkowski addition, $L_p$ addition, $M$-addition, Orlicz addition, Orlicz norm, Brunn-Minkowski inequality}
%\maketitle
\pagestyle{myheadings}
\markboth{RICHARD J. GARDNER, DANIEL HUG, AND
WOLFGANG WEIL}{THE ORLICZ-BRUNN-MINKOWSKI THEORY}

\begin{abstract}
The Orlicz-Brunn-Minkowski theory, introduced by Lutwak, Yang, and Zhang, is a new extension of the classical Brunn-Minkowski theory.  It represents a generalization of the $L_p$-Brunn-Minkowski theory, analogous to the way that Orlicz spaces generalize $L_p$ spaces.  For appropriate convex functions $\varphi:[0,\infty)^m\to [0,\infty)$, a new way of combining arbitrary sets in $\R^n$ is introduced. This operation, called Orlicz addition and denoted by $+_{\varphi}$, has several desirable properties, but is not associative unless it reduces to $L_p$ addition.  A general framework is introduced for the Orlicz-Brunn-Minkowski theory that includes both the new addition and previously introduced concepts, and makes clear for the first time the relation to Orlicz spaces and norms.  It is also shown that Orlicz addition is intimately related to a natural and fundamental generalization of Minkowski addition called $M$-addition.  The results obtained show, roughly speaking, that the Orlicz-Brunn-Minkowski theory is the most general possible based on an addition that retains all the basic geometrical properties enjoyed by the $L_p$-Brunn-Minkowski theory.

Inequalities of the Brunn-Minkowski type are obtained, both for $M$-addition and Orlicz addition. The new Orlicz-Brunn-Minkowski inequality implies the $L_p$-Brunn-Minkowski inequality.  New Orlicz-Minkowski inequalities are obtained that generalize the $L_p$-Minkowski inequality.  One of these has connections with the conjectured log-Brunn-Minkowski inequality of Lutwak, Yang, and Zhang, and in fact these two inequalities together are shown to split the classical Brunn-Minkowski inequality.
\end{abstract}

\maketitle

\section{Introduction}
Beginning in the late nineteenth century, the classical Brunn-Minkowski theory was developed by Minkowski, Blaschke, Aleksandrov, Fenchel, and others. Combining two concepts, volume and Minkowski addition, it became an extremely powerful tool in convex geometry with significant applications to various other areas of mathematics. Schneider's classic text \cite{Sch93} is an excellent survey and source of references.

During the last few decades, the core Brunn-Minkowski theory has been extended in several important ways. One of the two extensions that concern us is the $L_p$-Brunn-Minkowski theory, which blends volume and a different way of combining sets called $L_p$ addition, introduced by Firey in the 1960's.  Denoted by $+_p$, this is defined for $1\le p\le \infty$ by
\begin{equation}\label{introMink22}
h_{K+_pL}(x)^p=h_K(x)^p+h_L(x)^p,
\end{equation}
for all $x\in\R^n$ and compact convex sets $K$ and $L$ in $\R^n$ containing the origin, where the functions are the support functions of the sets involved.  (See Section~\ref{prelim} for unexplained terminology and notation.)  When $p=\infty$, (\ref{introMink22}) is interpreted as $h_{K+_{\infty}L}(x)=\max\{h_K(x),h_L(x)\}$, as is customary. When $p=1$, (\ref{introMink22}) defines ordinary Minkowski addition and then $K$ and $L$ need not contain the origin.  In the hands of first Lutwak in the 1990's, and then Lutwak, Yang, and Zhang, and many others, the $L_p$-Brunn-Minkowski theory has allowed many of the already potent sharp affine isoperimetric inequalities of the classical theory, as well as related analytic inequalities, to be strengthened.  It has also provided tools for attacks on major unsolved problems such as the slicing problem of Bourgain, and consolidated connections between convex geometry and information theory.  See, for example, \cite{DP}, \cite{HS1}, \cite{HS2}, \cite{LYZ1}, \cite{LYZ2}, \cite{LYZ3}, \cite{LYZ4}, \cite{LYZ6}, and \cite{Wer12}.

The other extension of interest here is the still more recent Orlicz-Brunn-Minkowski theory, initiated by Lutwak, Yang, and Zhang \cite{LYZ7}, \cite{LYZ8}. In these papers the fundamental notions of $L_p$ centroid body and $L_p$ projection body were extended to an Orlicz setting.  To say that this involves replacing the function $t^p$ by an arbitrary convex function $\varphi:[0,\infty)\to[0,\infty)$ with $\varphi (0)=0$ masks the difficulty of the task, one also present in the transition from $L_p$ spaces to Orlicz spaces \cite{KraR61}, \cite{RaoRen91}. Like the $L_p$ extension of the Brunn-Minkowski theory, the newer Orlicz extension requires considerable subtlety.  So far only a few other articles advance the theory, among which we mention \cite{HLYZ} and \cite{HuaH12}, which address the Orlicz version of the Minkowski problem.

One obstacle in the development of the Orlicz-Brunn-Minkowski theory appears to have been the lack of a notion corresponding to $L_p$ addition.  Perhaps one reason for this is that the most obvious definition of an Orlicz addition---obtained by simply replacing $t^p$ by $\varphi(t)$ throughout (\ref{introMink22})---turns out to yield nothing new, as we prove in Theorem~\ref{OrlDef2}. One contribution of the present paper is to correct this deficiency.  For simplicity, we shall consider only sums of two sets in this introduction.  We define the Orlicz sum $K+_\varphi L$ of compact convex sets $K$ and $L$ in $\R^n$ containing the origin, implicitly, by
\begin{equation}\label{Orldef22}
\varphi\left( \frac{h_K(x)}{h_{K+_\varphi L}(x)}, \frac{h_L(x)}{h_{K+_\varphi L}(x)}\right) = 1,
\end{equation}
for $x\in \R^n$, if $h_K(x)+h_L(x)>0$, and by $h_{K+_\varphi L}(x)=0$, if $h_K(x)=h_L(x)=0$.  Here $\varphi\in\Phi_2$, the set of convex functions $\varphi:[0,\infty)^2\to [0,\infty)$ that are increasing in each variable and satisfy $\varphi (0,0)=0$ and $\varphi(1,0)=\varphi(0,1)=1$.  Orlicz addition reduces to $L_p$ addition, $1\le p<\infty$, when $\varphi(x_1,x_2)=x_1^p + x_2^p$, or $L_{\infty}$ addition, when $\varphi(x_1,x_2)=\max\{x_1,x_2\}$.

In Theorem~\ref{Orthm1}, we show that this Orlicz addition $+_{\varphi}$ has several desirable properties.  For example, it is continuous in the Hausdorff metric,  $GL(n)$ covariant, and preserves the $o$-symmetry (origin symmetry) of sets. (See Section~\ref{Maddition} for definitions of properties of additions.) In \cite[Theorem~7.6 and Corollary~7.7]{GHW}, it was demonstrated that the first two properties alone force any addition between $o$-symmetric compact convex sets to be $M$-addition $\oplus_M$ for some 1-unconditional (that is, symmetric with respect to the coordinate axes) compact convex set $M$ in $\R^2$.  This means that $K+_\varphi L$ equals
$$
K\oplus_M L= \{ a x + b y : x\in K, y\in L, (a,b )\in M\},
$$
for all $o$-symmetric compact convex sets $K$ and $L$ in $\R^n$. However, unless it is already $L_p$ addition, $1\le p\le \infty$, the new Orlicz addition is not associative, as we show in Theorem~\ref{thmassoc}.  This is a consequence of new results proved here and of \cite[Theorem~7.9]{GHW}, which states that with three trivial exceptions, any operation between {\em $o$-symmetric} compact convex sets that is continuous in the Hausdorff metric,  $GL(n)$ covariant, and associative must be $L_p$ addition for some $1\le p\le\infty$.  Moreover, in Theorem~\ref{thmcomm}, we prove that Orlicz addition is commutative if and only if it is $L_{\infty}$ addition or $\varphi(x_1,x_2)=\varphi_0(x_1)+\varphi_0(x_2)$, for some $\varphi_0\in \Phi$, the set of convex functions $\varphi:[0,\infty)\to [0,\infty)$ that satisfy $\varphi(0)=0$ and $\varphi(1)=1$.

One of the main discoveries we make is that there is a surprisingly close relationship between Orlicz addition and $M$-addition.  For example, as operations between $o$-symmetric compact convex sets, they are essentially the same when $M$ is 1-unconditional.  More specifically, in this context, Theorem~\ref{Orthm2} implies that if $\varphi\in\Phi_2$, then $+_{\varphi}$ is $M$-addition for $M=J_{\varphi}^{\circ}$, the polar of the 1-unconditional convex body $J_\varphi=\{(\pm x_1,\pm x_2)\in [-1,1]^2: \varphi(|x_1|,|x_2|)\le 1\}$. Conversely, by Corollary~\ref{corinverseTh4.4}, if $M$ is a $1$-unconditional convex body in $\R^2$ that contains $(1,0)$ and $(0,1)$ in its boundary, then $\oplus_{M}$ is Orlicz addition $+_{\varphi}$ for some $\varphi\in \Phi_2$.  (The condition that $M$ contains $(1,0)$ and $(0,1)$ in its boundary can be removed by a slightly different choice for the class $\Phi_2$; see Remark~\ref{remOMequiv}.)  Analogous results are obtained for Orlicz and $M$-addition as operations between compact convex sets containing the origin.

In our view, these results, together with those in \cite{GHW}, shed considerable light on the nature of possible future extensions to the Brunn-Minkowski theory.  The classical theory and the $L_p$-Brunn-Minkowski theory arose from combining their respective additions and volume, with $L_p$ addition for $p>1$ giving up two features of Minkowski addition: the algebraic property of distributivity (see \cite[Theorem~7.1]{GHW}) and the geometric property of translation invariance.  Similarly, the Orlicz-Brunn-Minkowski theory can now also be seen, retroactively, as arising from combining volume and the new Orlicz addition, which in general also loses commutativity and associativity.  What other additions and corresponding extensions of the Brunn-Minkowski theory lie ahead?  Restricting to $o$-symmetric compact convex sets, we now see that any such further extension must be based on an addition that discards at least one of two fundamental assets: continuity in the Hausdorff metric and $GL(n)$ covariance.  Thus, roughly speaking, the Orlicz-Brunn-Minkowski theory is the most comprehensive possible that retains the amenity of these two geometrical properties.

Beyond this insight, the present paper makes two further contributions.  The first is in providing, for the first time, a general framework for the Orlicz-Brunn-Minkowski theory and at the same time clarifying its relation to Orlicz spaces.  In Section~4, we show that the classical notion of an Orlicz norm leads to a very general construction of a compact convex set $\Cpm$ depending on a function $\varphi\in \Phi_m$, the natural generalization of the class $\Phi_2$ to  functions from $[0,\infty)^m$ to $[0,\infty)$, $m\ge 2$, and a Borel measure $\mu$ in $\left({\K}^n_o\right)^m$, where ${\K}^n_o$ denotes the class of compact convex sets in $\R^n$ containing the origin.  Special cases include not only the Orlicz sum described above, but also the Orlicz projection bodies and Orlicz centroid bodies introduced in \cite{LYZ7} and \cite{LYZ8}.

The second of the above-mentioned contributions lies in establishing an array of new inequalities tied to the Orlicz-Brunn-Minkowski theory.  The first step is taken in Section~\ref{Extension}, which provides a definition of Orlicz addition between arbitrary sets in $\R^n$ consistent with that described above. Even for $L_p$ addition, it is far from obvious how such a generalization can be made.  In fact, this was carried out only recently by Lutwak, Yang, and Zhang \cite{LYZ}, who used it to extend Firey's $L_p$-Brunn-Minkowski inequality
\begin{equation}\label{LpBM22}
{V}(K+_pL)^{p/n} \ge {V}(K)^{p/n}+
{V}(L)^{p/n},
\end{equation}
where $p>1$ and $V$ denotes volume, to compact sets $K$ and $L$ in $\R^n$, proving also that if ${V}(K){V}(L)>0$, equality holds if and only if $K$ and $L$ are convex, contain the origin, and are dilatates of each other.  (Setting $p=1$ in (\ref{LpBM22}) yields the classical Brunn-Minkowski inequality, whose equality condition is different; see \cite[Section~4]{Gar02}.) Here we establish, in Section~\ref{Minequality}, both $M$-addition and Orlicz addition versions of the Brunn-Minkowski inequality for compact sets, including equality conditions.  For example, in Corollary~\ref{OBMnewer} we prove that if $\varphi\in \Phi_2$,
then
\begin{equation}\label{introMIz}
1\ge \varphi\left(\left(\frac{{V}(K)}
{{V}(K+_{\varphi}L)}\right)^{1/n},\left(\frac{{V}(L)}
{{V}(K+_{\varphi}L)}\right)^{1/n}\right),
\end{equation}
for all compact sets $K$ and $L$ in $\R^n$ with ${V}(K){V}(L)>0$.  Moreover, we show that when $\varphi$ is strictly convex, equality holds if and only if $K$ and $L$ are convex, contain the origin, and are dilatates of each other.  When $\varphi(x_1,x_2)=x_1^p+x_2^p$, this yields (\ref{LpBM22}) and its equality condition.  In fact (\ref{introMIz}), together with our results in Section~\ref{Extension}, essentially constitute an Orlicz extension of the results in \cite{LYZ}.

It is well known that the classical Brunn-Minkowski inequality for compact convex sets is equivalent to Minkowski's first inequality; see, for example, \cite[Section~5]{Gar02}.  The latter states that for compact convex sets $K$ and $L$ in $\R^n$,
\begin{equation}\label{M122}
V_1(K,L)\ge {V}(K)^{(n-1)/n}{V}(L)^{1/n}.
\end{equation}
The quantity $V_1(K,L)$ on the left in (\ref{M122}) is a special mixed volume equal to
\begin{equation}\label{FV22}
V_1(K,L)=\frac1n\lim_{\ee\to 0+}\frac{V(K+\ee L)-V(K)}{\ee}=\frac1n \int_{S^{n-1}} h_{L}(u)\,dS(K,u),
\end{equation}
where $S(K,\cdot)$ is the surface area measure of $K$.  The middle expression in (\ref{FV22}) is the first variation of the volume of $K$ with respect to $L$ and the right-hand side of (\ref{FV22}) is its integral representation.  The $L_p$-Brunn-Minkowski theory received its greatest single impetus when Lutwak \cite{L1} found the appropriate $L_p$ versions of (\ref{M122}) and (\ref{FV22}) and their ingredients.  By replacing Minkowski addition and scalar multiplication in (\ref{FV22}) by $L_p$ addition and its scalar multiplication ($\ee\cdot_p L=\ee^{1/p}L$), he showed that for $p>1$,
\begin{equation}\label{Lpvariation22}
V_p(K,L)\ge {V}(K)^{(n-p)/n}{V}(L)^{p/n},
\end{equation}
with equality if and only if $K$ and $L$ are dilatates or $L=\{o\}$, where
\begin{equation}\label{FV2233}
V_p(K,L)=\frac{p}{n}\lim_{\ee\to 0+}\frac{V(K+_p\,\ee\cdot_pL)-V(K)}{\ee}=\frac1n \int_{S^{n-1}} h_{L}(u)^ph_K(u)^{1-p}\,dS(K,u).
\end{equation}
Here $K$ is a convex body containing the origin in its interior and $L$ is a compact convex set containing the origin, assumptions we shall retain for the remainder of this introduction.

In particular, in the $L_p$-Brunn-Minkowski theory, $S(K,\cdot)$ is replaced by the $p$-surface area measure $S_p(K,\cdot)$ given by
$$
dS_p(K,\cdot)=h_K^{1-p}\,dS(K,\cdot).
$$
Haberl and Parapatits \cite{HabP13+} provide a characterization of these measures.

In Section~\ref{OMImv} we introduce a new notion of Orlicz linear combination, by means of an appropriate modification of (\ref{Orldef22}) (see (\ref{OrldefGG}) below).  Unlike the $L_p$ case, an Orlicz scalar multiplication cannot generally be considered separately.  The particular instance of interest corresponds to using (\ref{Orldef22}) with $\varphi(x_1,x_2)=\varphi_1(x_1)+\ee\varphi_2(x_2)$ for $\ee>0$ and some $\varphi_1,\varphi_2\in \Phi$, in which case we write $K+_{\varphi,\ee}L$ instead of $K+_{\varphi}L$.  If $\varphi_1(t)=\varphi_2(t)=t^p$, then $K+_{\varphi,\ee}L=K+_p\,\ee\cdot_pL$, as in (\ref{FV2233}).  In Theorem~\ref{OrlMV}, we compute the Orlicz first variation of volume, obtaining the equation
\begin{equation}\label{OrlMVdefff22}
\frac{(\varphi_1)'_l(1)}{n}\lim_{\ee\rightarrow 0+}\frac{V(K+_{\varphi,\ee}L)-V(K)}{\ee}=\frac{1}{n}\int_{S^{n-1}}
\varphi_2\left(\frac{h_L(u)}{h_K(u)}\right)h_K(u)\,dS(K,u).
\end{equation}
Denoting by $V_{\varphi}(K,L)$, for any $\varphi\in \Phi$, the integral on the right-hand side of (\ref{OrlMVdefff22}) with $\varphi_2$ replaced by $\varphi$, we see that either side of the equation (\ref{OrlMVdefff22}) is equal to $V_{\varphi_2}(K,L)$ and therefore this new Orlicz mixed volume plays the same role as $V_p(K,L)$ in the $L_p$-Brunn-Minkowski theory. In Theorem~\ref{NewMIthm}, we establish the following Orlicz-Minkowski inequality:
\begin{equation}\label{NewMI22}
V_{\varphi}(K,L)\ge V(K)\varphi\left(\left(\frac{V(L)}{V(K)}\right)^{1/n}\right).
\end{equation}
Here, if $\varphi$ is strictly convex, equality holds if and only if $K$ and $L$ are dilatates or $L=\{o\}$.
Note that when $\varphi(t)=t^p$, $p\ge 1$, (\ref{NewMI22}) and (\ref{OrlMVdefff22}) become (\ref{Lpvariation22}) and (\ref{FV2233}), respectively.

Other approaches are possible and this is one topic in Section~\ref{Alternative}.  A different Orlicz version of Minkowski's first inequality (\ref{M122}) is presented in Theorem~\ref{FVnew}.  This results from replacing the left-hand side of (\ref{M122}) by the quantity
$$
\inf\left\{\lambda>0:\int_{S^{n-1}}\varphi
\left(\frac{h_L(u)}{\lambda h_K(u)}\right)h_K(u)\,dS(K,u)\le nV(K)\right\},
$$
an idea suggested by the definition of Orlicz projection bodies in \cite{LYZ7}.

Finally, we discover an intriguing connection between (\ref{NewMI22}) and the log-Minkowski inequality
\begin{equation}\label{logMink22}
\int_{S^{n-1}}h_K(v)\log\left(\frac{h_L(v)}{h_K(v)}\right)
\,dS(K,v)\ge V(K)\log\left(\frac{V(L)}{V(K)}\right),
\end{equation}
for $o$-symmetric convex bodies $K$ and $L$, proved by B\"{o}r\"{o}czky, Lutwak, Yang, and Zhang \cite{BLYZ} when $n=2$ and conjectured by them to hold for all $n\ge 2$.  For such bodies, this is stronger than Minkowski's first inequality (\ref{M122}).  Inequality (\ref{logMink22}) is just (\ref{NewMI22}) with $\varphi(t)=\log t$, but of course since $\log t$ is concave, not convex, this choice is invalid.  On the other hand, Lemma~\ref{Dan} shows that it is possible to take $\varphi(t)=-\log(1-t)$ in (\ref{NewMI22}) provided $L\subset\inte K$.  In Theorem~\ref{Wolfgang1}, this is used to obtain the inequality
\begin{equation}\label{cons122}
\log\left(\frac{V(K)^{1/n}-V(L)^{1/n}}{V(K)^{1/n}}\right)\ge \frac{1}{nV(K)}\int_{S^{n-1}}h_{K}(u)\log\left(\frac{h_K(u)-h_L(u)}{h_{K}(u)}\right)\,
dS(K,u),
\end{equation}
with equality if and only if $K$ and $L$ are dilatates or $L=\{o\}$.
We conclude the paper by showing that (\ref{logMink22}) and (\ref{cons122}) can be used together to split the classical Brunn-Minkowski inequality.  This means that $A\ge B\ge C$, where $A\ge C$ is the Brunn-Minkowski inequality, (\ref{logMink22}) implies $A\ge B$, and (\ref{cons122}) implies $B\ge C$.

\section{Definitions and preliminaries}\label{prelim}

As usual, $S^{n-1}$ denotes the unit sphere and $o$ the origin in Euclidean
$n$-space $\R^n$.  We shall assume that $n\ge 2$ throughout.  The unit ball in $\R^n$ will be denoted by $B^n$. The
standard orthonormal basis for $\R^n$ will be $\{e_1,\dots,e_n\}$.  Otherwise, we usually denote the coordinates of $x\in \R^n$ by $x_1,\dots,x_n$.  If $x,y\in\R^n$, then $x\cdot y$ is the inner product of $x$ and $y$, and
$[x,y]$ is the line segment with endpoints $x$ and $y$. If $x\in \R^n\setminus\{o\}$, then $x^{\perp}$ is the $(n-1)$-dimensional subspace orthogonal to $x$ and $l_x$ is the line through $o$ containing $x$.  (Throughout the paper, the term {\em subspace} means a linear subspace.)

If $X$ is a set in $\R^n$,  we denote by $\partial X$, $\cl X$, $\inte X$, and $\conv X$ the {\it boundary}, {\it closure}, {\it interior}, and {\it convex hull} of $X$, respectively.  If $S$ is a subspace of $\R^n$, then $X|S$ is the (orthogonal) projection of $X$ on $S$ and $x|S$ is the projection of a vector $x\in\R^n$ on $S$.

If $t\in\R$, then $tX=\{tx:x\in X\}$. When $t>0$, $tX$ is called a {\em dilatate} of $X$.

A {\it body} in $\R^n$ is a compact set equal to the closure of its interior.

For a compact set $K\subset\R^n$, we write $V(K)$ for the ($n$-dimensional) Lebesgue measure of $K$ and call this the {\em volume} of $K$. We follow Schneider \cite{Sch93} by writing $\kappa_n$ for the volume $V(B^n)$ of
the unit ball in $\R^n$, so that $\kappa_n=\pi^{n/2}/\,\Gamma(1+n/2)$.

A subset of $\R^n$ is {\it $o$-symmetric} if it is centrally symmetric, with center at the
origin.  We shall call a set in $\R^n$ {\em $1$-unconditional} if it is symmetric with respect to each coordinate hyperplane; this is traditional in convex geometry for compact convex sets.

Let ${\mathcal K}^n$ be the class of nonempty compact convex
subsets of  $\R^n$, let ${\mathcal K}^n_s$ denote the class of $o$-symmetric members of ${\mathcal K}^n$, let ${\mathcal{K}}_o^n$ be the class of members of ${\mathcal K}^n$ containing the origin, and let ${\mathcal{K}}_{oo}^n$ be those sets in ${\mathcal K}^n$ containing the origin in their interiors. A set $K\in {\mathcal K}^n$ is called a {\em
convex body} if its interior is nonempty.

If $K\subset\R^n$ is a nonempty closed (not necessarily bounded) convex set, then
$$
h_K(x)=\sup\{x\cdot y: y\in K\},
$$
for $x\in\R^n$, defines the {\it support function} $h_K$ of $K$. A nonempty closed convex set is uniquely determined by its support function.   Support functions are {\em homogeneous of degree 1}, that is,
$$h_K(rx)=rh_K(x),$$
for all $x\in \R^n$ and $r\ge 0$, and are therefore often regarded as functions on $S^{n-1}$.  They are also {\em subadditive}, i.e.,
$$h_K(x+y)\le h_K(x)+h_K(y),$$
for all $x,y\in \R^n$.  Any real-valued function on $\R^n$ that is {\em sublinear}, that is, both homogeneous of degree 1 and subadditive, is the support function of a unique compact convex set.  If $x\in\R^n\setminus\{o\}$, then
\begin{equation}\label{suppset}
F(K,x)=\{y\in \R^n: x\cdot y=h_K(x)\}\cap K
\end{equation}
is the {\em support set} of $K$ with outer normal vector $x$.  Proofs of these facts can be found in \cite{Sch93}.

The {\em surface area measure} of a compact convex set $K$ in $\R^n$ is a Borel measure in $S^{n-1}$, denoted by $S(K,\cdot)$. The centroid of $S(K,\cdot)$ is the origin.  The special mixed volume $V_1(K,L)$ of $K, L\in {\mathcal K}^n$ given by
\begin{equation}\label{V1form}
V_1(K,L)=\frac1n \int_{S^{n-1}} h_{L}(u)\,dS(K,u)=\frac1n \lim_{\ee\to 0+}\frac
{V(K+\ee L)-V(K)}{\ee}
\end{equation}
satisfies $V_1(K,K)=V(K)$. {\em Minkowski's first inequality} \cite[Theorem~B.2.1]{Gar06}, \cite[Theorem~6.2.1]{Sch93} states that
\begin{equation}\label{MinkIneq}
V_1(K,L)^{n}\ge V(K)^{n-1}V(L),
\end{equation}
with equality if and only if $K$ and $L$ lie in parallel hyperplanes or are homothetic.

The equation
\begin{equation}\label{normconem}
d\overline{V}_n(K,v)=\frac{h_K(v)}{nV(K)}\,dS(K,v)
\end{equation}
defines a Borel measure in $S^{n-1}$, the {\em normalized cone measure}, for $K\in {\mathcal{K}}_{oo}^n$.  By (\ref{V1form}) with $K=L$, $\overline{V}_n(K,\cdot)$ is a probability measure in $S^{n-1}$.

Let $K\subset\R^n$ be a nonempty closed convex set. If $S$ is a subspace of $\R^n$, then it is easy to show that
\begin{equation}\label{hproj}
h_{K|S}(x)=h_K(x|S),
\end{equation}
for each $x\in \R^n$.  The formula (see \cite[(0.27), p.~18]{Gar06})
\begin{equation}\label{suppchange}
h_{AK}(x)=h_K(A^t x),
\end{equation}
for $x\in \R^n$ and a linear transformation $A:\R^n\to\R^n$, gives the change in a support function under $A$, where $A^t$ denotes
the transpose of $A$.  (Equation (\ref{suppchange}) is proved in \cite[p.~18]{Gar06} for compact sets and $A\in GL(n)$, but the proof is the same if $K$ is unbounded or $A$ is singular.)

The {\em polar set} of an arbitrary set $K$ in $\R^n$ is
$$
K^{\circ}=\{x\in \R^n:\,x\cdot y\le 1 {\mathrm{~for~all~}} y\in K\}.$$
See, for example, \cite[p.~99]{Web}.

Recall that $l_x$ is the line through the origin containing $x\in\R^n\setminus\{o\}$. A set $L$ in $\R^n$ with $o\in L$ is {\it star-shaped at $o$} if $L\cap l_u$ is a (possibly degenerate) closed line segment for each $u\in
S^{n-1}$. If $o\in L$ and $L$ is star-shaped at $o$, we define its {\it radial
function} $\rho_L$ for $x\in \R^n\setminus\{o\}$ by
\begin{equation}\label{radialfunction}
\rho_{L}(x)=\max\{\lambda:\lambda x\in L\}.
\end{equation}
See \cite[Section~0.7]{Gar06}, where these terms are defined more generally, however.

The vector or Minkowski sum of sets $X$ and $Y$ in $\R^n$ is defined by
$$
X+Y=\{x+y: x\in X, y\in Y\}.
$$
When $K,L\in {\mathcal K}^n$, $K+L$ can be equivalently defined as the compact convex set such that
$$
h_{K+L}(u)=h_K(u)+h_L(u),
$$
for all $u\in S^{n-1}$.

Let $1< p\le \infty$. Firey \cite{Fir61}, \cite{Fir62} introduced the notion of what is now called the {\em $L_p$ sum} of $K,L\in {\mathcal K}^n_o$.  (The operation has also been called Firey addition, as in \cite[Section~24.6]{BZ}.)  This is the compact convex set $K+_pL$ defined by
\begin{equation}\label{Lpaddition}
h_{K+_pL}(u)^p=h_K(u)^p+h_L(u)^p,
\end{equation}
for $u\in S^{n-1}$ and $p<\infty$, and by
$$
h_{K+_{\infty}L}(u)=\max\{h_K(u),h_L(u)\},
$$
for all $u\in S^{n-1}$. Note that $K+_{\infty}L=\conv(K\cup L)$.

Lutwak, Yang, and Zhang \cite{LYZ} extended the previous definition for $1<p<\infty$, as follows.  Let $K$ and $L$ be arbitrary subsets of $\R^n$ and define
\begin{equation}\label{LYZLp}
K+_pL=\left\{(1-t)^{1/p'}x+t^{1/p'}y: x\in K, y\in L, 0\le t\le 1\right\},
\end{equation}
where $p'$ is the H\"{o}lder conjugate of $p$, i.e., $1/p+1/p'=1$.  In \cite{LYZ} it is shown that when $K,L\in {\mathcal K}^n_o$, this definition agrees with the previous one.  However, the right-hand side of (\ref{LYZLp}) is not always convex for $K, L\in {\mathcal K}^n$.  To see this, take $K=\{x\}$ and $L=\{y\}$, where $x$ and $y$ do not lie on the same line through the origin.  Then $K+_pL$ is a nonlinear curve that approaches $[x,x+y]\cup [y,x+y]$ as $p\rightarrow 1$ and $[x,y]$ as $p\rightarrow\infty$.

Another reasonable definition of the $L_p$ sum of $K,L\in {\mathcal K}^n$ for $1\le p\le\infty$ is given in \cite[Example~6.7]{GHW}.

The {\em left derivative} and {\em right derivative} of a real-valued function $f$ are denoted by $f'_l$ and $f'_r$, respectively.

Suppose that $\mu$ is a probability measure on a space $X$ and $g:X\to I\subset \R$ is a $\mu$-integrable function, where $I$ is a possibly infinite interval.  {\em Jensen's inequality} states that if $\varphi:I\to \R$ is a convex function, then
\begin{equation}\label{JenIneq}
\int_X \varphi\left(g(x)\right)\,d\mu(x)\ge \varphi\left(
\int_X g(x)\,d\mu(x)\right).
\end{equation}
If $\varphi$ is strictly convex, equality holds if and only if $g(x)$ is constant for $\mu$-almost all $x\in X$.  See, for example, \cite[Theorem~3.10, p.~165 and p.~243]{Hof94}.

Throughout the paper, $\Phi_m$, $m\in\N$, denotes the set of convex functions $\varphi: [0,\infty)^m\to [0,\infty)$ that are increasing in each variable and satisfy $\varphi (o)=0$ and $\varphi(e_j)=1>0$, $j=1,\dots,m$.  The normalization here is a matter of convenience and other choices are possible.  For example, the requirement $\varphi(e_j)=c>0$, $j=1,\dots,m$, for some constant $c$, can be adopted without affecting any of the results.  More generally still, it would be possible to work with the class $\overline{\Phi}_m$, defined as above but with the assumption that $\varphi(e_j)=1>0$, $j=1,\dots,m$, replaced by the requirement that $\varphi(te_j)$, $t\ge 0$, is not identically zero, for $j=1,\dots,m$, in which case the role of $e_j$ is replaced by the point $t_je_j$ satisfying $\varphi(t_je_j)=1$, for $j=1,\dots,m$.  Again, the results are essentially unaffected, though some adjustments would be required.  In this regard, see Remarks~\ref{remOMequiv} and~\ref{remLC}.

When $m=1$, we shall write $\Phi$ instead of $\Phi_1$.

If $\varphi\in \Phi$, we put
\begin{equation}\label{tau}
\tau=\tau(\varphi)=\max\{t\ge 0:\varphi(t)=0\}<1.
\end{equation}

\section{Properties of operations and $M$-addition}\label{Maddition}

Let $\mathcal{C}\subset\mathcal{D}$ be classes of sets in $\R^n$ and let $*:{\mathcal{C}}^m\rightarrow {\mathcal{D}}$ be an $m$-ary operation, with values denoted by $*(K_1,\dots,K_m)$ for $K_1,\dots,K_m\in \mathcal{C}$.  Here, and throughout the paper, we always assume that $m\ge 2$.  When $m=2$, we shall write $K*L$, for $K,L\in \mathcal{C}$, instead of $*(K,L)$.

In the following list, it is assumed that $\mathcal{C}$ and $\mathcal{D}$ are suitable classes for the property under consideration.  The properties are supposed to hold for all appropriate $K, L, N, K_j, L_j, K_{ij}\in{\mathcal{C}}$. See \cite{GHW} for further properties and information.

\medskip

1. ({\em Commutativity} ($m=2$ only))\quad $K*L=L*K$.

2. ({\em Associativity} ($m=2$ only))\quad $K*(L*N)=(K*L)*N$.

3. ({\em Homogeneity of degree $1$}) \quad $r\left(*(K_1,\dots,K_m)\right)=*(rK_1,\dots,rK_m)$, for all $r\ge 0$.

4.  ({\em Identity}) \quad $*(\{o\},\dots,\{o\},K_j,\{o\},\dots,\{o\})=K_j$, for $j=1,\dots,m$.

5. ({\em Continuity}) \quad $K_{ij}\rightarrow K_{0j},\, j=1,\dots,m \Rightarrow *(K_{i1},\dots,K_{im})\rightarrow *(K_{01},\dots,K_{0m})$ as $i\rightarrow\infty$ in the Hausdorff metric.

6. ({\em $GL(n)$ covariance}) \quad $A\left(*(K_1,\dots,K_m)\right)=*(AK_1,\dots,AK_m)$, for all $A\in GL(n)$.

7. ({\em Projection covariance}) \quad $\left(*(K_1,\dots,K_m)\right)|S=*(K_1|S,\dots,K_m|S)$ for every subspace $S$ of $\R^n$.

8. ({\em Monotonicity}) \quad $K_j\subset L_j$, $j=1,\dots,m$ $\Rightarrow *(K_1,\dots,K_m)\subset *(L_1,\dots,L_m)$.

\medskip

A straightforward modification of the proof of \cite[Lemma~4.1]{GHW} yields the following useful result.

\begin{lem}\label{CGimpliesPm}
Let ${\mathcal{C}}\subset{\mathcal{K}}^n$ be closed under the action of $GL(n)$ and the taking of Hausdorff limits. If $*:{\mathcal{C}}^m\rightarrow {\mathcal{K}}^n$ is continuous and $GL(n)$ covariant, then it is also projection covariant.
\end{lem}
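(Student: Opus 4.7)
The plan is to realize the orthogonal projection onto a subspace as a limit of elements of $GL(n)$ and then swap the projection past $*$ by using continuity and $GL(n)$ covariance. Fix a subspace $S\subset\R^n$ and denote by $P_S$ the orthogonal projection of $\R^n$ onto $S$, so that $P_SK=K|S$ for any $K\subset\R^n$. For each $i\in\N$, define
$$
A_i=P_S+\frac{1}{i}(I-P_S),
$$
where $I$ is the identity on $\R^n$. This map acts as the identity on $S$ and as multiplication by $1/i$ on $S^{\perp}$, hence $A_i\in GL(n)$, and $\|A_i-P_S\|_{\mathrm{op}}=1/i\to 0$.

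For any compact set $K\subset\R^n$ contained in a ball of radius $R$, the bound $\|A_ix-P_Sx\|\le R/i$ for all $x\in K$ shows that $A_iK\to P_SK=K|S$ in the Hausdorff metric as $i\to\infty$. Applying this to each $K_j$ as well as to the compact convex set $*(K_1,\dots,K_m)$ gives
$$
A_iK_j\to K_j|S\quad(j=1,\dots,m),\qquad A_i\bigl(*(K_1,\dots,K_m)\bigr)\to\bigl(*(K_1,\dots,K_m)\bigr)|S.
$$
Since ${\mathcal{C}}$ is closed under the action of $GL(n)$, we have $A_iK_j\in{\mathcal{C}}$ for every $i$ and $j$, and since ${\mathcal{C}}$ is closed under Hausdorff limits, $K_j|S\in{\mathcal{C}}$ as well; thus $*$ can be legitimately evaluated both on the approximating sequences and on their limits.

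By $GL(n)$ covariance,
$$
A_i\bigl(*(K_1,\dots,K_m)\bigr)=*(A_iK_1,\dots,A_iK_m)
$$
for each $i\in\N$. Passing to the limit, the left-hand side tends to $\bigl(*(K_1,\dots,K_m)\bigr)|S$, while by continuity of $*$ the right-hand side tends to $*(K_1|S,\dots,K_m|S)$. The two limits must agree, which is exactly projection covariance. The argument is a direct $m$-ary adaptation of the binary case handled in \cite[Lemma~4.1]{GHW}, with the Hausdorff limit performed simultaneously in all $m$ slots, and I do not expect any serious obstacle: the two closure hypotheses on $\mathcal{C}$ are precisely what is needed to ensure that continuity of $*$ can be applied to the limiting configuration.
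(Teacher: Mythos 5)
Your proof is correct and is essentially the paper's argument: the paper simply defers to a straightforward $m$-ary modification of \cite[Lemma~4.1]{GHW}, whose proof is exactly this approximation of the orthogonal projection $P_S$ by the invertible maps $P_S+\frac{1}{i}(I-P_S)$, followed by an exchange of limits justified by continuity and $GL(n)$ covariance. Your explicit verification that the closure hypotheses on $\mathcal{C}$ make every evaluation of $*$ legitimate is exactly the point of those hypotheses.
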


Let $M$ be an arbitrary subset of $\R^m$. The {\em $M$-sum} $\oplus_M (K_1,K_2,\dots, K_m)$ of arbitrary sets $K_1,K_2,\dots,K_m$ in $\R^n$ is defined in \cite[Section~6]{GHW} by
\begin{equation}\label{Mndef}
\oplus_M (K_1,K_2,\dots, K_m)= \left\{\sum_{j=1}^ma_jx^{(j)}: x^{(j)}\in K_j, (a_1,a_2,\dots,a_m)\in M\right\}.
\end{equation}
(When $m>2$, this was called an $M$-combination in \cite{GHW}.) An equivalent definition is
\begin{equation}\label{Mnaltdef}
\oplus_M (K_1,K_2,\dots, K_m)= \cup\left\{a_1K_1+a_2K_2+\cdots +a_mK_m : (a_1,a_2,\dots,a_m)\in M\right\}.
\end{equation}
Several properties of $M$-addition follow easily from these equivalent definitions.  The $m$-ary operation $\oplus_M$ is $GL(n)$ covariant, hence homogeneous of degree 1, and monotonic.  If $M$ and $K_j$, $j=1,\dots,m$, are compact, then $\oplus_M (K_1,K_2,\dots, K_m)$ is also compact.

When $m=2$, the $M$-sum of sets $K$ and $L$ is denoted by $K\oplus_M L$.  Note that in this case, if $M=\{(1,1)\}$, then $\oplus_M$ is ordinary vector or Minkowski addition, and if
$$
M=\left\{(a,b)\in [0,1]^2:a^{p'}+b^{p'}=1\right\}=\left\{\left((1-t)^{1/p'},t^{1/p'}\right): 0\le t\le 1\right\},
$$
where $p>1$ and $1/p+1/p'=1$, then $\oplus_M$ is $L_p$ addition as defined in \cite{LYZ}.
The limiting case $p=1$, $p'=\infty$ gives $M=[e_1,e_1+e_2]\cup [e_2,e_1+e_2]$, which corresponds to the operation
$$K\oplus_M L= \left(\conv\{K,o\}+L\right)\cup\left(K + \conv\{L,o\}\right).$$
The case $p=\infty$, $p'=1$ corresponds to $M=[e_1,e_2]$ and
$$K\oplus_M L=\{(1-t)x+ty:x\in K,y\in L, 0\le t\le 1\}=\conv(K\cup L).$$

It appears that $M$-addition was first introduced, for centrally symmetric compact convex sets $K$ and $L$ and a $1$-unconditional convex body $M$ in $\R^2$, by Protasov \cite{Pro97}, motivated by work on the joint spectral radius in the theory of normed algebras.  Protasov proved that if $M$ is a $1$-unconditional compact convex subset in $\R^2$, then $\oplus_M:\left({\mathcal{K}}^n_s\right)^2\rightarrow {\mathcal{K}}^n_s$.  (This proof is omitted in the English translation.)  In
\cite[Corollary~6.4]{GHW} this result is generalized to $M$-sums where $m\ge 2$.  Also, in \cite[Theorem~6.1(i) and Corollary~6.4]{GHW}, it is shown that if $M$ is a compact convex subset of $[0,\infty)^m$, then $\oplus_M:\left({\mathcal{K}}^n\right)^m\rightarrow {\mathcal{K}}^n$, and in this case we also have $\oplus_M:\left({\mathcal{K}}^n_s\right)^m\rightarrow {\mathcal{K}}^n_s$.  In either of these settings, $\oplus_M$ is continuous and hence, by Lemma~\ref{CGimpliesPm}, projection covariant.  Moreover, by \cite[Theorem~6.5(ii)]{GHW}, we have
\begin{equation}\label{nn}
h_{\oplus_M(K_1,\ldots,K_m)}(x)=h_{M}\left(h_{K_1}(x),\ldots,h_{K_m}(x)\right),
\end{equation}
for all $K_1,\ldots,K_m\in\K^n_s$ and all $x\in \R^n$.

By \cite[Theorem~7.6 and Corollary~7.7]{GHW}, an operation $*:\left({\mathcal{K}}^n_s\right)^2\rightarrow {\mathcal{K}}^n$ is continuous and $GL(n)$-covariant if and only if it is projection covariant, and such operations are precisely those defined for all $K,L\in {\mathcal{K}}^n_s$ by $K*L=K\oplus_M L$, where $M$ is a $1$-unconditional compact convex subset of $\R^2$. Furthermore, \cite[Theorem~7.9]{GHW} states that $*:\left({\mathcal{K}}^n_s\right)^2\rightarrow {\mathcal{K}}^n$ is projection covariant and associative if and only if $*=\oplus_M$, where $M=\{o\}$, or $M=[-e_1,e_1]$, or $M=[-e_2,e_2]$, or $M$ is the unit ball in $l^2_p$ for some $1\le p\le \infty$.  The latter case means that the addition is $L_p$ addition for some $1\le p\le \infty$.

It is also true, by \cite[Theorems~9.7 and~9.9]{GHW}, that an operation $*:\left({\mathcal{K}}^n\right)^2\rightarrow {\mathcal{K}}^n$ (or $*:\left({\mathcal{K}}^n_o\right)^2\rightarrow {\mathcal{K}}^n$) is continuous and $GL(n)$-covariant if and only if it is projection covariant, and the latter holds if and only if there is a nonempty closed convex set $M$ in $\R^4$ such that
$$
h_{K*L}(x)=h_{M}\left(h_K(-x),h_K(x),h_L(-x), h_L(x)\right),
$$
for all $K,L\in {\mathcal{K}}^n$ (or all $K,L\in {\mathcal{K}}^n_o$, respectively) and $x\in\R^n$.

We shall need generalizations of \cite[Theorems~7.6 and~9.7]{GHW} to $m$-ary operations, presented as Theorems~\ref{Thmnew7.6} and~\ref{thm275b} below.  In order to keep the exposition reasonably short, frequent but brief references are made to some arguments in \cite{GHW}, given there in full detail. We start with the following lemma.

\begin{lem}\label{generalLe7.4}
\noindent ${\mathrm{(i)}}$  The $m$-ary operation $*:(\K^n_s)^m\to\K^n$ is projection covariant if and only if there is a homogeneous of degree 1 function
$f:[0,\infty)^m\to[0,\infty)$ such that
\begin{equation}\label{geneq7.4}
h_{*(K_1,\dots,K_m)}(x)=f(h_{K_1}(x),\ldots,h_{K_m}(x)),
\end{equation}
for all $K_1,\ldots,K_m\in\K^n_s$ and all $x\in \R^n$.

\smallskip

\noindent ${\mathrm{(ii)}}$ The function $f:[0,\infty)^m\to[0,\infty)$ in \eqref{geneq7.4} is increasing in each variable.

\smallskip

\noindent ${\mathrm{(iii)}}$ If $*:(\K^n_s)^m\to\K^n$ is projection covariant, then $*:(\K^n_s)^m\to\K^n_s$.
\end{lem}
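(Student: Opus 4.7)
The ``if'' direction of (i) is routine: given the formula, for any subspace $S$ of $\R^n$ and any $x\in S$, one uses \eqref{hproj} and the definition of $f$ to compute $h_{(*(K_1,\dots,K_m))|S}(x)=f(h_{K_1}(x),\dots,h_{K_m}(x))=f(h_{K_1|S}(x),\dots,h_{K_m|S}(x))=h_{*(K_1|S,\dots,K_m|S)}(x)$, and homogeneity of $f$ together with $f(o)=0$ ensures both sides vanish on $S^{\perp}$, so the two sets coincide. Part (iii) follows at once from (i), since $h_{K_j}$ is even for every $K_j\in\K^n_s$, which forces $h_{*(K_1,\dots,K_m)}$ to be even and hence $*(K_1,\dots,K_m)$ to be $o$-symmetric. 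So the bulk of the work is the ``only if'' direction of (i), together with the monotonicity in (ii).

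My plan for the ``only if'' direction of (i) is to construct $f$ from canonical $o$-symmetric segments. First, projection covariance applied to $(\{o\},\dots,\{o\})$ forces $*(\{o\},\dots,\{o\})\subset S$ for every subspace $S$, so $*(\{o\},\dots,\{o\})=\{o\}$. Fix $v\in S^{n-1}$ and $\vec{t}\in[0,\infty)^m$ and write $T_j=[-t_jv,t_jv]$; projecting $*(T_1,\dots,T_m)$ onto $v^{\perp}$ forces it into $l_v$, so it equals $[\alpha_v(\vec{t})v,\beta_v(\vec{t})v]$ for some $\alpha_v(\vec{t})\le\beta_v(\vec{t})$. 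For any second unit vector $u$ with $c:=u\cdot v>0$, projecting this identity onto $l_u$ gives $\alpha_u(c\vec{t})=c\alpha_v(\vec{t})$ and $\beta_u(c\vec{t})=c\beta_v(\vec{t})$; swapping the roles of $u$ and $v$ produces the companion relation, and combining the two forces positive homogeneity of $\alpha_v,\beta_v$ of degree $1$ in $\vec{t}$, after which the original relation delivers the direction-independence $\alpha_u=\alpha_v=:\alpha$ and $\beta_u=\beta_v=:\beta$. Chaining through a single intermediate unit vector $w$ with $w\cdot u>0$ and $w\cdot v>0$ extends this to any pair of non-antipodal unit vectors $u,v$ (the antipodal case will fall out of the symmetry step below).

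To establish the symmetry $\alpha=-\beta$, I would re-run the projection argument for a pair $u,v$ with $c:=u\cdot v<0$: the segment $[-t_jv,t_jv]$ projects to $[-|c|t_ju,|c|t_ju]$, but the endpoints of $*(T_1,\dots,T_m)$ now appear on $l_u$ in reversed order, giving $\alpha_u(|c|\vec{t})=-|c|\beta_v(\vec{t})$ and hence, by homogeneity, $\alpha_u=-\beta_v$. Picking any such $u\ne -v$, direction-independence from the previous step forces $\alpha_v=-\beta_v$, and so $*(T_1,\dots,T_m)=[-\beta(\vec{t})v,\beta(\vec{t})v]$. Setting $f:=\beta$ yields a function $f:[0,\infty)^m\to[0,\infty)$ positively homogeneous of degree $1$. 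For the general formula, given $K_j\in\K^n_s$ and $x\ne o$, set $u=x/|x|$ and apply projection covariance onto $l_u$: since $K_j|l_u=[-h_{K_j}(u)u,h_{K_j}(u)u]$, the identity above produces $h_{*(K_1,\dots,K_m)}(u)=f(h_{K_1}(u),\dots,h_{K_m}(u))$, and positive homogeneity of $f$ and of support functions then gives $h_{*(K_1,\dots,K_m)}(x)=f(h_{K_1}(x),\dots,h_{K_m}(x))$.

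For (ii), observe that $F(x):=f(h_{K_1}(x),\dots,h_{K_m}(x))$ must be a support function, hence subadditive, for every choice of $K_j\in\K^n_s$. Take $K_1=[-t_1e_1,t_1e_1]+[-\delta e_2,\delta e_2]$, an $o$-symmetric rectangle, and $K_j=[-t_je_1,t_je_1]$ for $j\ge 2$; one computes $F(e_1\pm e_2)=f(t_1+\delta,t_2,\dots,t_m)$ and $F(2e_1)=2f(t_1,\dots,t_m)$, so the subadditivity estimate $F(2e_1)\le F(e_1+e_2)+F(e_1-e_2)$ delivers $f(t_1,\dots,t_m)\le f(t_1+\delta,t_2,\dots,t_m)$ for every $\delta>0$, giving monotonicity in the first variable; the remaining coordinates are handled analogously. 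The principal obstacle of the whole proof is the symmetry step $\alpha=-\beta$: projection covariance only supplies orthogonal projections onto subspaces and encodes no reflection or $-I$ covariance, so a priori the canonical output $*(T_1,\dots,T_m)$ could be asymmetric in $l_v$; the remedy above leans crucially on both direction-independence and the order-reversal that occurs when projecting in a direction with $u\cdot v<0$.
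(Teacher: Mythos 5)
Your argument is correct and follows essentially the same route as the paper, which obtains the segment representation $*([-t_1u,t_1u],\dots,[-t_mu,t_mu])=[-g_u(t_1,\dots,t_m)u,f_u(t_1,\dots,t_m)u]$ and establishes direction-independence, nonnegativity, and homogeneity of $f_u,g_u$ by citing the analogous steps in \cite{GHW}, while your rectangle-based subadditivity trick for (ii) is the same device the paper uses (via \cite[Lemma~6.8]{GHW} and again in the proof of Theorem~\ref{Thmnew7.6}). The only inessential extra is your symmetry step $\alpha=-\beta$: since $h_{[\alpha v,\beta v]}(v)=\beta$ when $\alpha\le\beta$, the identity \eqref{geneq7.4} already follows from direction-independence and homogeneity of the upper endpoint alone, and the $o$-symmetry in (iii) is then read off from the evenness of the right-hand side of \eqref{geneq7.4}.
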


\begin{proof}
\noindent ${\mathrm{(i)}}$  Suppose that $*:(\K^n_s)^m\to\K^n$ is projection covariant. Let $u\in S^{n-1}$ and recall that $l_u$ denotes the line through the origin parallel to $u$.  For $K_1,\ldots,K_m\in\K^n_s$, we have
$$
\left(*(K_1,\dots,K_m)\right)|l_u=(K_1|l_u)*\dots*(K_m|l_u).
$$
Arguing as in the proof of \cite[Lemma~7.4]{GHW}, we obtain functions $f_u,g_u:[0,\infty)^m\to\R$ such that $-g_u\le f_u$ and
$$
*\left([-t_1 u,t_1u],\dots,[-t_m u,t_m u]\right)=[-g_u(t_1,\ldots,t_m)u,f_u(t_1,\ldots,t_m)u],
$$
for all $t_1,\ldots,t_m\ge 0$. As in the proof of \cite[Lemma~7.4]{GHW}, one shows that $f_u$ and $g_u$ are independent of $u$, nonnegative, and homogeneous of degree 1 and that therefore \eqref{geneq7.4} holds.

As in \cite[Lemma~7.4]{GHW}, the converse follows easily from (\ref{hproj}) and (\ref{geneq7.4}).

\noindent ${\mathrm{(ii)}}$ To show that $f$ is increasing in each variable, we can follow the proof of the ``only if" part of \cite[Lemma~6.8]{GHW}.  (This uses only \eqref{geneq7.4} and the fact that $f$ is homogeneous of degree 1.)

\noindent ${\mathrm{(iii)}}$ This is an easy consequence of (\ref{geneq7.4}).
\end{proof}

\begin{thm}\label{Thmnew7.6}
An $m$-ary operation $*:(\K^n_s)^m\to\K^n$ is projection covariant if and only if it can be defined
by
\begin{equation}\label{geneq7.6}
h_{*(K_1,\ldots,K_m)}(x)=h_M(h_{K_1}(x),\ldots,h_{K_m}(x)),
\end{equation}
for all $K_1,\ldots,K_m\in\K^n_s$ and all $x\in \R^n$, or equivalently by
\begin{equation}\label{geneq7.6b}
*(K_1,\ldots,K_m)=\oplus_M(K_1,\ldots,K_m),
\end{equation}
where $M$ is a 1-unconditional compact convex set in $\R^m$ uniquely determined by \eqref{geneq7.6}.
\end{thm}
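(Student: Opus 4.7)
The plan is to prove the equivalence in two directions. The implication from the representation~\eqref{geneq7.6b} (equivalently \eqref{geneq7.6}) to projection covariance follows almost immediately from the tools already collected in Section~\ref{Maddition}, so the substantive content lies in the converse, where the set $M$ must be manufactured from the given operation.

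For the ``only if'' direction, I would first apply Lemma~\ref{generalLe7.4} to obtain a function $f:[0,\infty)^m\to[0,\infty)$ that is positively homogeneous of degree $1$, increasing in each variable, and satisfies $h_{*(K_1,\ldots,K_m)}(x)=f(h_{K_1}(x),\ldots,h_{K_m}(x))$ for all $K_1,\ldots,K_m\in\K^n_s$ and all $x\in\R^n$. To interpret $f$ as a support function, I would extend it to $g:\R^m\to[0,\infty)$ by
\[
g(t_1,\ldots,t_m)=f(|t_1|,\ldots,|t_m|).
\]
By construction $g$ is nonnegative, positively homogeneous of degree $1$, and invariant under sign changes of coordinates, so once I verify that $g$ is subadditive it will equal $h_M$ for a unique compact convex $M\subset\R^m$, and the sign invariance will force $M$ to be $1$-unconditional. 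Equation~\eqref{geneq7.6} is then just the definition of $g$, since $h_{K_j}(x)\ge 0$ whenever $K_j\in\K^n_s$.

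The key step, and the main obstacle, is proving the subadditivity of $g$. Using the componentwise triangle inequality together with the monotonicity of $f$ established in Lemma~\ref{generalLe7.4}, the matter reduces to showing
\[
f(a_1+b_1,\ldots,a_m+b_m)\le f(a_1,\ldots,a_m)+f(b_1,\ldots,b_m)
\]
for every $a,b\in[0,\infty)^m$. To get this I would produce suitable test bodies exploiting $n\ge 2$: for each $j$, set $K_j=[-(a_je_1+b_je_2),\,a_je_1+b_je_2]$, which is a centered segment and so lies in $\K^n_s$. A direct computation of the support function yields $h_{K_j}(e_1)=a_j$, $h_{K_j}(e_2)=b_j$, and $h_{K_j}(e_1+e_2)=a_j+b_j$, the last equality requiring $a_j,b_j\ge 0$. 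Substituting these values into the identity from Lemma~\ref{generalLe7.4} and then invoking the subadditivity of the support function $h_{*(K_1,\ldots,K_m)}$ at $x=e_1$ and $y=e_2$ delivers exactly the desired inequality.

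For the ``if'' direction, equation~\eqref{nn} identifies \eqref{geneq7.6} with \eqref{geneq7.6b} on $(\K^n_s)^m$. The results of \cite{GHW} quoted in Section~\ref{Maddition} ensure that whenever $M$ is a $1$-unconditional compact convex subset of $\R^m$, the operation $\oplus_M$ maps $(\K^n_s)^m$ into $\K^n_s$ and is continuous and $GL(n)$ covariant; Lemma~\ref{CGimpliesPm} then upgrades this to projection covariance. Uniqueness of $M$ is immediate, since \eqref{geneq7.6} forces $h_M$ to agree with $g$ on the positive orthant, the $1$-unconditional symmetry pins $h_M$ down on all of $\R^m$, and support functions determine compact convex sets uniquely.
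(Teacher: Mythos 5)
Your proposal is correct and follows essentially the same route as the paper's proof: the same application of Lemma~\ref{generalLe7.4}, the same test segments $K_j=[-(x_je_1+y_je_2),\,x_je_1+y_je_2]$ evaluated at $e_1$, $e_2$, and $e_1+e_2$ to extract subadditivity of $f$ from subadditivity of the support function, and the same extension $F(x)=f(|x_1|,\ldots,|x_m|)$ to realize $M$ as the compact convex set with $h_M=F$. The only cosmetic difference is that the paper handles uniqueness by citing the corresponding lemma in \cite{GHW}, whereas you argue it directly; both are fine.
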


\begin{proof}
By Lemma~\ref{generalLe7.4}(i), an operation defined by (\ref{geneq7.6}) is projection covariant.

If $*:(\K^n_s)^m\to\K^n$ is projection covariant, then by Lemma~\ref{generalLe7.4}, $*:(\K^n_s)^m\to\K^n_s$ and there is a homogeneous of degree 1 function $f:[0,\infty)^m\to[0,\infty)$, increasing in each variable, such that \eqref{geneq7.4}
holds for all $K_1,\ldots,K_m\in\K^n_s$ and all $x\in \R^n$.  Let
$x=(x_1,\dots,x_m)\in [0,\infty)^m$ and $y=(y_1,\dots,y_m)\in [0,\infty)^m$, and define $K_j=[-x_je_1-y_je_2,x_je_1+y_je_2]$ for $j=1,\ldots,m$. Then $K_j\in\K^n_s$, $h_{K_j}(e_1+e_2)=x_j+y_j$, $h_{K_j}(e_1)=x_j$, and $h_{K_j}(e_2)=y_j$. Applying \eqref{geneq7.4} and the subadditivity of support functions, we obtain
$$
f(x+y)=h_{*(K_1,\ldots,K_m)}(e_1+e_2)\le h_{*(K_1,\ldots,K_m)}(e_1)+h_{ *(K_1,\ldots,K_m)}(e_2)=f(x)+f(y).
$$
Thus $f$ is sublinear. Define a function $F:\R^m\to[0,\infty)$ by $F(x_1,\ldots,x_m)=f(|x_1|,\ldots,|x_m|)$. Then $F$ is homogeneous of
degree 1. Using the fact that $f$ is increasing in each argument and subadditive, we obtain
\begin{align*}
F(x+y)&=f(|x_1+y_1|,\ldots, |x_m+y_m|)\le f(|x_1|+|y_1|,\ldots, |x_m|+|y_m|)\\
&=f((|x_1| ,\ldots, |x_m| )+(|y_1| ,\ldots, |y_m| ))\le f(|x_1| ,\ldots, |x_m| )+f(|y_1| ,\ldots, |y_m| )\\
&=F(x)+F(y),
\end{align*}
for $x,y\in\R^m$.
Consequently, there is a compact convex set $M$ in $\R^m$ such that $F=h_M$. The symmetry of $F$ implies that $M$ is 1-unconditional. Moreover, by \eqref{geneq7.4} again and (\ref{nn}), we have
\begin{eqnarray*}
h_{*(K_1,\ldots,K_m)}(x)&=&f\left(h_{K_1}(x),\ldots,h_{K_m}(x)\right)
=F\left(h_{K_1}(x),\ldots,h_{K_m}(x)\right)\\
&=&h_{M}\left(h_{K_1}(x),\ldots,h_{K_m}(x)\right)
=h_{\oplus_M}\left(h_{K_1}(x),\ldots,h_{K_m}(x)\right),
\end{eqnarray*}
for all $K_1,\ldots,K_m\in\K^n_s$ and all $x\in \R^n$.  This proves (\ref{geneq7.6}) and (\ref{geneq7.6b}), and the uniqueness of $M$ follows as in the proof of \cite[Lemma~7.6]{GHW}.
\end{proof}

\begin{cor}\label{contaff}
An operation $*:\left({\mathcal{K}}^n_s\right)^m\rightarrow {\mathcal{K}}^n$  is projection covariant if and only if it is continuous and $GL(n)$ covariant (and hence homogeneous of degree 1).
\end{cor}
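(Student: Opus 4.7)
The plan is to derive this corollary by directly assembling Lemma~\ref{CGimpliesPm}, Theorem~\ref{Thmnew7.6}, and the basic properties of $M$-addition recalled in the paragraphs preceding Lemma~\ref{generalLe7.4}. No new ideas should be required; the work is essentially bookkeeping.

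First I would handle the direction ``continuous and $GL(n)$ covariant $\Rightarrow$ projection covariant.'' The class $\K^n_s$ is closed under the action of $GL(n)$, since a linear map sends $-K$ to $-AK$, and closed under Hausdorff limits, since Hausdorff convergence commutes with the map $K\mapsto -K$. Consequently, Lemma~\ref{CGimpliesPm} applies verbatim with $\mathcal{C}=\K^n_s$ and yields projection covariance.

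For the converse direction, I would invoke Theorem~\ref{Thmnew7.6} to produce a $1$-unconditional compact convex set $M\subset\R^m$ such that $*(K_1,\ldots,K_m)=\oplus_M(K_1,\ldots,K_m)$ for all $K_1,\ldots,K_m\in\K^n_s$. From the generalization of Protasov's result cited in the discussion before Lemma~\ref{generalLe7.4} (namely \cite[Corollary~6.4]{GHW}), for any such $M$ the $m$-ary operation $\oplus_M$ maps $(\K^n_s)^m$ into $\K^n_s$ and is continuous in the Hausdorff metric. Moreover, $GL(n)$ covariance of $\oplus_M$ is immediate from the defining formula \eqref{Mndef}, since for every $A\in GL(n)$,
$$
A\sum_{j=1}^m a_j x^{(j)} = \sum_{j=1}^m a_j Ax^{(j)},\qquad (a_1,\ldots,a_m)\in M,\; x^{(j)}\in K_j.
$$
Thus $*$ is both continuous and $GL(n)$ covariant. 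The parenthetical ``hence homogeneous of degree~1'' then follows by applying $GL(n)$ covariance with $A=rI$ for $r>0$, the case $r=0$ being handled by continuity together with the convention $0\cdot K=\{o\}$.

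I do not expect any substantive obstacle: the heavy lifting has already been done in Theorem~\ref{Thmnew7.6} and in the cited $M$-addition results from \cite{GHW}. The only point that demands any attention at all is checking stability of $\K^n_s$ under $GL(n)$ and Hausdorff limits so that Lemma~\ref{CGimpliesPm} is actually applicable, and this is routine.
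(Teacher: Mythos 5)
Your proposal is correct and follows essentially the same route as the paper: Lemma~\ref{CGimpliesPm} for the forward direction, and Theorem~\ref{Thmnew7.6} combined with the continuity and $GL(n)$ covariance of $\oplus_M$ (from the cited $M$-addition results in \cite{GHW}) for the converse. Your extra checks (closure of $\K^n_s$ under $GL(n)$ and Hausdorff limits, and the homogeneity remark via $A=rI$) are correct but left implicit in the paper.
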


\begin{proof}
If $*$ is continuous and $GL(n)$ covariant, then it is projection covariant by Lemma~\ref{CGimpliesPm} and homogeneous of degree 1.  Since $\oplus_M:\left({\mathcal{K}}^n_s\right)^m\rightarrow {\mathcal{K}}_s^n$ is continuous and $GL(n)$ covariant, the converse follows from Theorem~\ref{Thmnew7.6}.
\end{proof}

The following two results are obtained by a straightforward modification of the proofs of \cite[Theorem~9.7 and Corollary~9.9]{GHW}.

\begin{thm}\label{thm275b}
The $m$-ary operation $*:\left({\mathcal{K}}^n\right)^m\rightarrow {\mathcal{K}}^n$ (or $*:\left({\mathcal{K}}^n_o\right)^m\rightarrow {\mathcal{K}}^n$) is projection covariant if and only if there is a nonempty closed convex set $M$ in $\R^{2m}$ such that
\begin{equation}\label{nnnb}
h_{*(K_1,\ldots,K_m)}(x)=h_M\left(h_{K_1}(-x),h_{K_1}(x), \ldots,h_{K_m}(-x), h_{K_m}(x)\right),
\end{equation}
for all $K_1,\ldots,K_m\in {\mathcal{K}}^n$ (or $K_1,\ldots,K_m\in{\mathcal{K}}^n_o$, respectively) and $x\in\R^n$.
\end{thm}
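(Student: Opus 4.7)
The plan is to mirror the proof of Theorem~\ref{Thmnew7.6}, but replace each single coordinate $h_{K_j}(x)$ by the pair $(h_{K_j}(-x), h_{K_j}(x))$, since members of $\K^n$ (and $\K^n_o$) need no longer be $o$-symmetric, so their support functions at $x$ and $-x$ must be tracked independently. This doubles the ambient dimension of the target set $M$ from $\R^m$ to $\R^{2m}$.

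For the easy (``if'') direction I would verify that the operation defined by (\ref{nnnb}) is projection covariant. Given a subspace $S$ of $\R^n$ and $x\in S$, both $x|S=x$ and $(-x)|S=-x$, so by (\ref{hproj}) one has $h_{K_j|S}(\pm x)=h_{K_j}(\pm x)$ for each $j$. Plugging into (\ref{nnnb}) then yields $h_{(*(K_1,\ldots,K_m))|S}(x)=h_{*(K_1|S,\ldots,K_m|S)}(x)$ for all $x\in S$, and hence $(*(K_1,\ldots,K_m))|S=*(K_1|S,\ldots,K_m|S)$.

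For the converse, assume $*$ is projection covariant. First I would establish an analogue of Lemma~\ref{generalLe7.4} adapted to $\K^n$ (or $\K^n_o$): fixing $u\in S^{n-1}$, write $*([-s_1u,t_1u],\ldots,[-s_mu,t_mu])=[-g_u(s_1,t_1,\ldots,s_m,t_m)\,u,\,f_u(s_1,t_1,\ldots,s_m,t_m)\,u]$ for some nonnegative functions $f_u, g_u$ on $[0,\infty)^{2m}$. Projecting onto another line $l_v$ and imitating the argument in the proof of \cite[Lemma~7.4]{GHW}, one obtains that $f_u$ and $g_u$ are independent of $u$ and homogeneous of degree $1$; call them $f$ and $g$. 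Relabeling $u\leftrightarrow -u$ relates $g$ to $f$ via $g(s_1,t_1,\ldots,s_m,t_m)=f(t_1,s_1,\ldots,t_m,s_m)$. Since any $K_j\in\K^n$ satisfies $K_j|l_u=[-h_{K_j}(-u)u,\,h_{K_j}(u)u]$, projection covariance then gives $h_{*(K_1,\ldots,K_m)}(u)=f(h_{K_1}(-u),h_{K_1}(u),\ldots,h_{K_m}(-u),h_{K_m}(u))$ for each $u\in S^{n-1}$, and homogeneity extends this to all $x\in\R^n$.

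The core technical step is to show that $f$ is sublinear on $[0,\infty)^{2m}$. I would imitate the argument for Theorem~\ref{Thmnew7.6}: given two tuples of inputs, construct rectangles $K_j$ in the $e_1,e_2$-plane realizing those tuples as $(h_{K_j}(-e_1),h_{K_j}(e_1),h_{K_j}(-e_2),h_{K_j}(e_2))$, and then apply subadditivity of $h_{*(K_1,\ldots,K_m)}$ at $e_1+e_2$ to deduce subadditivity of $f$. In the $\K^n_o$ setting one ensures the rectangles contain the origin. Once $f$ is sublinear on $[0,\infty)^{2m}$, extend it to a sublinear function $F:\R^{2m}\to(-\infty,\infty]$; by the standard correspondence between sublinear functions and closed convex sets, $F=h_M$ for a unique nonempty closed convex $M\subset\R^{2m}$, which gives (\ref{nnnb}). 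The main obstacle lies precisely in the sublinearity step and the subsequent extension to $\R^{2m}$: the bookkeeping is heavier than in the symmetric case because of the doubled set of evaluation points, and one must carefully verify that the rectangles chosen to realize prescribed values of $h_{K_j}(\pm e_1)$ and $h_{K_j}(\pm e_2)$ lie in the appropriate class. This issue is essentially addressed in \cite[Theorem~9.7 and Corollary~9.9]{GHW}, whose arguments go through with only the straightforward bookkeeping modifications needed for $m$-ary operations.
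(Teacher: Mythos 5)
Your proposal is correct and follows essentially the same route as the paper, which gives no independent argument but simply states that the result is obtained by a straightforward $m$-ary modification of \cite[Theorem~9.7 and Corollary~9.9]{GHW} --- exactly the adaptation (tracking $h_{K_j}(\pm x)$ separately, reducing to segments $K_j|l_u$, proving sublinearity of the resulting function $f$, and extending it to $h_M$ on $\R^{2m}$) that you outline. The one detail to watch is that for general $K_j\in\K^n$ the value $h_{K_j}(-u)$ may be negative, so the natural domain of $f$ is not $[0,\infty)^{2m}$ but $\{(s,t):s+t\ge 0\}^m$; this is the same bookkeeping already handled in the cited binary case.
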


\begin{cor}\label{contaff7}
An operation $*:\left({\mathcal{K}}^n\right)^m\rightarrow {\mathcal{K}}^n$ (or $*:\left({\mathcal{K}}^n_o\right)^m\rightarrow {\mathcal{K}}^n$) is projection covariant if and only if it is continuous and $GL(n)$ covariant (and hence homogeneous of degree 1).
\end{cor}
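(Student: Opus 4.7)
The plan is to adapt the proof of Corollary~\ref{contaff}, substituting Theorem~\ref{thm275b} for Theorem~\ref{Thmnew7.6}. For the forward implication, suppose $*$ is continuous and $GL(n)$ covariant. Both $\K^n$ and $\K^n_o$ are closed under the action of $GL(n)$ (for the second class one uses that $A\{o\}=\{o\}$ for $A\in GL(n)$) and under Hausdorff limits (containment of $o$ passes to the limit), so the obvious $m$-ary version of Lemma~\ref{CGimpliesPm} delivers projection covariance. Homogeneity of degree $1$ then falls out of $GL(n)$ covariance specialized to $A=rI$ with $r>0$.

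For the converse, assume $*$ is projection covariant. Theorem~\ref{thm275b} supplies a nonempty closed convex set $M\subset\R^{2m}$ with
$$
h_{*(K_1,\ldots,K_m)}(x)=h_M\bigl(h_{K_1}(-x),h_{K_1}(x),\ldots,h_{K_m}(-x),h_{K_m}(x)\bigr)
$$
for all admissible arguments. To verify $GL(n)$ covariance, I would fix $A\in GL(n)$ and apply (\ref{suppchange}) to each $h_{K_j}$, computing
$$
h_{A*(K_1,\ldots,K_m)}(x)=h_{*(K_1,\ldots,K_m)}(A^tx)=h_M\bigl(h_{AK_1}(-x),\ldots,h_{AK_m}(x)\bigr)=h_{*(AK_1,\ldots,AK_m)}(x);
$$
the unique determination of compact convex sets by support functions then forces $A*(K_1,\ldots,K_m)=*(AK_1,\ldots,AK_m)$. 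For continuity, recall that $K_{i,j}\to K_{0,j}$ in the Hausdorff metric is equivalent to $h_{K_{i,j}}\to h_{K_{0,j}}$ uniformly on $S^{n-1}$, so the $2m$-tuple of support-function arguments in the displayed formula converges uniformly in $x\in S^{n-1}$; composition with $h_M$ then yields uniform convergence of $h_{*(K_{i,1},\ldots,K_{i,m})}$ on $S^{n-1}$, that is, Hausdorff convergence.

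The one mild technical point is that $M$ in Theorem~\ref{thm275b} need only be closed and convex, so $h_M$ takes values in $[0,\infty]$ a priori. Since the left-hand side of the formula is finite everywhere, however, $h_M$ must be finite on the compact image of the continuous map $x\mapsto(h_{K_{0,1}}(-x),\ldots,h_{K_{0,m}}(x))$ on $S^{n-1}$, and a sublinear function is continuous on the interior of its effective domain, which by positive homogeneity contains this compact image; this makes the uniform-convergence argument rigorous. I do not anticipate any deeper obstacle, since Theorem~\ref{thm275b} has already carried out the substantive structural work and only these two short verifications remain.
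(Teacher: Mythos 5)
Your overall strategy is the right one and is what the paper intends (the paper itself only records that Corollary~\ref{contaff7} follows by modifying arguments from \cite{GHW}): the forward implication via the $m$-ary version of Lemma~\ref{CGimpliesPm}, and the converse by reading off $GL(n)$ covariance and continuity from the representation \eqref{nnnb} supplied by Theorem~\ref{thm275b}. The $GL(n)$ covariance computation is correct as written.

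The gap is in the continuity argument, specifically the claim that the compact image of $x\mapsto\left(h_{K_{0,1}}(-x),h_{K_{0,1}}(x),\ldots,h_{K_{0,m}}(x)\right)$ lies in the \emph{interior} of the effective domain of $h_M$. Positive homogeneity only makes that effective domain a cone; it does not move the image off the cone's boundary. The tuples realizable by sets in $\K^n$ fill out the polyhedral cone $C=\{(a_1,b_1,\ldots,a_m,b_m):a_j+b_j\ge 0\}$ (for $\K^n_o$ it is $[0,\infty)^{2m}$), Theorem~\ref{thm275b} guarantees finiteness of $h_M$ only on such tuples, and the image lies entirely in $\partial C$ whenever some $K_{0,j}$ is lower-dimensional (a singleton gives $h_{K_{0,j}}(x)+h_{K_{0,j}}(-x)=0$ for every $x$); in the $\K^n_o$ case the same happens whenever some $K_{0,j}$ does not contain the origin in its interior. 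Since the corollary must cover limits of exactly this kind, your argument does not establish continuity there, and a support function need not be continuous relative to its effective domain in general. The repair is standard but has to be invoked: $h_M$ is lower semicontinuous as a supremum of linear functions, and a convex function finite on a polyhedral convex set is upper semicontinuous relative to that set \cite[Theorem~10.2]{Roc70}; hence $h_M$ is continuous relative to $C$, and uniformly continuous on compact subsets of $C$, which is all your argument requires. (Alternatively, pointwise convergence of the support functions already yields Hausdorff convergence by \cite[Theorem~1.8.12]{Sch93}, as used in Lemma~\ref{constlem}, so uniformity is not even needed.)
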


\section{Orlicz norms and Orlicz-Minkowski integrals}\label{Onorms}

Recall the definition of the class $\Phi_m$ from Section~\ref{prelim}. If $\varphi\in \Phi_m$, $Z$ is a set, and $\mu$ is a nonnegative measure in $Z$, we denote by $L_{\varphi}(Z,\mu)$ the set of all $\mu$-measurable functions $f:Z\to \R^m$ of the form $f=(f_1,\dots,f_m)$, where $f_j:Z\to \R$, such that
\begin{equation}\label{Orliczfinite}
\int_{Z}\varphi\left(\frac{|f|(z)}{\lambda}\right)\, d\mu(z)<\infty,
\end{equation}
for some $\lambda>0$,
with
$$|f|(z)=\left(|f_1(z)|,\dots,|f_m(z)|\right).$$
For $f\in L_{\varphi}(Z,\mu)$ we define
\begin{equation}\label{Orlnorm}
\| f\|_{\varphi}=\inf\left\{\lambda>0:\int_{Z} \varphi\left(\frac{|f|(z)}{\lambda}\right)\, d\mu(z)\le 1\right\}.
\end{equation}
Then $L_{\varphi}(Z,\mu)$ is a
vector space and $\|\cdot\|_{\varphi}$
is a norm, called an {\em Orlicz} (or {\em Luxemburg}) {\em norm} (see \cite[p.~49]{RaoRen91}). (Formally, $L_{\varphi}(Z,\mu)$ is considered here as a set of equivalence classes.)  The infimum in \eqref{Orlnorm}
is not taken over the empty set, since \eqref{Orliczfinite}, the dominated convergence theorem, and $\varphi(o)=0$ imply that
$\int_Z\varphi\left(|f|(z)/\lambda\right)\, d\mu(z)\le 1$ if $\lambda$ is sufficiently large. We also remark that $\varphi\in\Phi_m$ can be
extended to a map $\widetilde\varphi:\R^m\to[0,\infty)$ by defining $\widetilde\varphi(x)=\varphi(|x_1|,\ldots,|x_m|)$ for $x\in\R^m$. Then $\widetilde\varphi$ is convex on $\R^m$ (compare the arguments in the proof of Proposition \ref{triangleprp}) and $\varphi(|f|(z))=\widetilde\varphi(f(z))$, for $z\in Z$.

For the convenience of the reader, we provide a proof of the triangle inequality, since even when $m=1$, assumptions on $\varphi$ vary widely throughout the literature and some proofs (for example, that in \cite[p.~79]{KraR61}) are not directly applicable for our class $\Phi_m$.

\begin{prp}\label{triangleprp}
Let $\varphi\in {\Phi_m}$ and $f,g\in L_{\varphi}(Z,\mu)$. Then $f+g\in L_{\varphi}(Z,\mu)$ and
$$\|f+g\|_{\varphi}\le \|f\|_{\varphi}+\|g\|_{\varphi}.$$
\end{prp}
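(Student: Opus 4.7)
The proof naturally follows the template used for the scalar Luxemburg norm, but each classical ingredient must be verified for the present class $\Phi_m$ of convex functions that are increasing in each variable and vanish at the origin.

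The first preliminary step is to show that for every $\lambda>\|f\|_\varphi$ one has $\int_Z\varphi(|f|(z)/\lambda)\,d\mu(z)\le 1$. Indeed, by definition of the infimum there is some $\lambda_0\le\lambda$ with $\int_Z\varphi(|f|/\lambda_0)\,d\mu\le 1$, and since $\varphi$ is increasing in each coordinate and $|f|/\lambda\le |f|/\lambda_0$ componentwise, integrating $\varphi(|f|/\lambda)\le \varphi(|f|/\lambda_0)$ gives the claim.

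The main step is the following weighted convexity bound. Fix $\varepsilon>0$ and set $a=\|f\|_\varphi+\varepsilon$, $b=\|g\|_\varphi+\varepsilon$, and $c=a+b$. Since $|f_j+g_j|\le |f_j|+|g_j|$ coordinatewise, we have
$$
\frac{|f+g|(z)}{c}\le \frac{a}{c}\cdot \frac{|f|(z)}{a}+\frac{b}{c}\cdot \frac{|g|(z)}{b}
$$
componentwise, with $a/c+b/c=1$. Applying monotonicity of $\varphi$ in each variable and then its convexity yields
$$
\varphi\!\left(\frac{|f+g|(z)}{c}\right)\le \frac{a}{c}\,\varphi\!\left(\frac{|f|(z)}{a}\right)+\frac{b}{c}\,\varphi\!\left(\frac{|g|(z)}{b}\right).
$$
Integrating over $Z$ and using the preliminary step for the two summands on the right, we obtain $\int_Z \varphi(|f+g|/c)\,d\mu\le a/c+b/c=1$. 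Hence $f+g\in L_\varphi(Z,\mu)$ and $\|f+g\|_\varphi\le c=\|f\|_\varphi+\|g\|_\varphi+2\varepsilon$. Letting $\varepsilon\to 0$ gives the triangle inequality.

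A small technical point to address at the start is the degenerate case $\|f\|_\varphi=0$: then for every $\lambda>0$ one has $\int_Z\varphi(|f|/\lambda)\,d\mu\le 1$, and replacing $|f|/\lambda$ by $k|f|/\lambda$ for arbitrary $k>0$ together with the monotone convergence theorem forces $\varphi(|f|)=0$ $\mu$-a.e., which by $\varphi(e_j)=1$ and homogeneity-type monotonicity in each variable forces $f=0$ $\mu$-a.e.; the inequality is then trivial. I expect the only real obstacle to be bookkeeping—being careful that the componentwise inequalities interact correctly with monotonicity in each variable (which is why the hypothesis that $\varphi$ is increasing in each argument, and not merely convex, is used crucially in the first displayed inequality above).
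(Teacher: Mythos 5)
Your proof is correct and follows essentially the same route as the paper: fix $\ee>0$, use admissible parameters within $\ee$ of the two norms, apply the coordinatewise triangle inequality together with the monotonicity and convexity of $\varphi$, and integrate to see that $\|f\|_\varphi+\|g\|_\varphi+2\ee$ is admissible for $f+g$. The only cosmetic difference is that you take $a=\|f\|_\varphi+\ee$ exactly and justify its admissibility by a separate monotonicity step (the paper instead picks $\lambda_1<\|f\|_\varphi+\ee$ directly from the definition of the infimum); your remarks on the degenerate case $\|f\|_\varphi=0$ are harmless but unnecessary, since $a=\ee>0$ there and the main argument applies unchanged.
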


\begin{proof}
Let $\ee>0$. Then in view of (\ref{Orlnorm}), there are $\lambda_1,\lambda_2>0$ such that
\begin{equation}\label{lam12}
\int_{Z}\varphi\left(\frac{|f|(z)}{\lambda_1}
\right)\, d\mu(z)\le 1~~\quad~~{\text{and}}~~\quad~~
\int_{Z}\varphi\left(\frac{|g|(z)}{\lambda_2}\right)\, d\mu(z)\le 1,
\end{equation}
$\lambda_1<\|f\|_{\varphi}+\ee$, and $\lambda_2<\|g\|_{\varphi}+\ee$.  Using the fact that $\varphi$ is convex on $[0,\infty)^m$ and
increasing in each variable, $|f_j(z)+g_j(z)|\le |f_j(z)|+|g_j(z)|$ for $z\in Z$ and $j=1,\ldots,m$, and (\ref{lam12}), we obtain
\begin{eqnarray*}
\lefteqn{\int_{Z}\varphi\left(\frac{|f+g|(z)}{\lambda_1+\lambda_2}\right)\, d\mu(z)}\\
&\le & \int_{Z}\varphi\left(\frac{\lambda_1}{\lambda_1+\lambda_2}
\frac{|f|(z)}{\lambda_1}+\frac{\lambda_2}{\lambda_1+\lambda_2}
\frac{|g|(z)}{\lambda_2}\right)\,d\mu(z)\\
&\le &\frac{\lambda_1}{\lambda_1+\lambda_2}
\int_{Z}\varphi\left(\frac{|f|(z)}{\lambda_1}
\right)\, d\mu(z)+
\frac{\lambda_2}{\lambda_1+\lambda_2}
\int_{Z}\varphi\left(\frac{|g|(z)}{\lambda_2}\right)\, d\mu(z)\le 1.
\end{eqnarray*}
This shows that $f+g\in L_{\varphi}(Z,\mu)$ and
$$\|f+g\|_{\varphi}\le \lambda_1+\lambda_2\le  \|f\|_{\varphi}+\|g\|_{\varphi}+2\ee,$$
from which the result follows.
\end{proof}

For our application, we take $Z=({\K}^n_o)^m$. For each $x\in\R^n$, we define a continuous function $h_x:\left({\K}^n_o\right)^m\to [0,\infty)^m$ by letting
$$h_x(K_1,\dots,K_m)=\left(h_{K_1}(x),\dots,h_{K_m}(x)\right),
$$
for all $(K_1,\dots,K_m)\in \left({\K}^n_o\right)^m$.  In order that $h_x\in L_{\varphi}\left(\left({\K}^n_o\right)^m,\mu\right)$,
i.e., that (\ref{Orliczfinite}) holds when $f=h_x$, we require that the measure $\mu$ in $\left({\K}^n_o\right)^m$ is a Borel measure such
that, for each $x\in\R^n$,
\begin{equation}\label{finitecondn}
\int_{\left({\K}^n_o\right)^m}\varphi\left(\frac{h_{K_1}(x)}{\lambda},\dots,\frac{h_{K_m}(x)}{\lambda}\right)\, d\mu(K_1,\dots,K_m)<\infty,
\end{equation}
for some $\lambda>0$. (It is sufficient to require this for all $x\in S^{n-1}$.)
We refer to an integral of this type as an {\em Orlicz-Minkowski integral}.

Next, we define
\begin{eqnarray}
\label{newdeff}
h_{\Cpm}(x)&=&\|h_x\|_{\varphi}\nonumber\\
&=&\inf\left\{\lambda>0: \int_{\left({\K}^n_o\right)^m}\varphi\left(\frac{h_{K_1}(x)}{\lambda}
,\dots,\frac{h_{K_m}(x)}{\lambda}\right)\, d\mu(K_1,\dots,K_m)\le 1\right\},
\end{eqnarray}
for all $x\in \R^n$.

\begin{lem}\label{OrlMinkInt}
The function $h_{\Cpm}$ defined by \eqref{newdeff} is the support function of a compact convex set $\Cpm$ in $\R^n$.
\end{lem}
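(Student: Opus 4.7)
The plan is to verify that $h_{\Cpm}$ is a finite-valued, nonnegative, sublinear function on $\R^n$, and then invoke the standard fact recalled in Section~\ref{prelim} that any such function is the support function of a unique compact convex set.

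First, for each fixed $x \in \R^n$, the map $h_x:\left(\K^n_o\right)^m \to [0,\infty)^m$ is continuous (as stated in the paragraph preceding the lemma), and by assumption \eqref{finitecondn} it lies in $L_\varphi\!\left(\left(\K^n_o\right)^m, \mu\right)$. The remark following \eqref{Orlnorm} then shows that the set of admissible $\lambda$ in \eqref{newdeff} is nonempty, so $h_{\Cpm}(x) = \|h_x\|_{\varphi}$ is a well-defined, finite, nonnegative number. Positive homogeneity follows by noting that, for $r > 0$, homogeneity of degree $1$ of each $h_{K_j}$ gives $h_{rx} = r\, h_x$ pointwise on $\left(\K^n_o\right)^m$; substituting $\lambda \mapsto \lambda/r$ in \eqref{newdeff} then yields $h_{\Cpm}(rx) = r\, h_{\Cpm}(x)$. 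The case $r = 0$ holds because $h_o$ vanishes identically and $\varphi(o) = 0$, so every $\lambda > 0$ is admissible and the infimum is $0$.

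For subadditivity, fix $x, y \in \R^n$. Subadditivity of each $h_{K_j}$ gives the coordinate-wise pointwise inequality $h_{x+y}(K_1,\ldots,K_m) \le h_x(K_1,\ldots,K_m) + h_y(K_1,\ldots,K_m)$ on $\left(\K^n_o\right)^m$. Since $\varphi$ is increasing in each variable, this forces
$$\varphi\!\left(\frac{h_{x+y}(K_1,\ldots,K_m)}{\lambda}\right) \le \varphi\!\left(\frac{h_x(K_1,\ldots,K_m) + h_y(K_1,\ldots,K_m)}{\lambda}\right)$$
for every $\lambda > 0$, so every $\lambda$ admissible for $h_x + h_y$ in \eqref{Orlnorm} is admissible for $h_{x+y}$, giving $\|h_{x+y}\|_{\varphi} \le \|h_x + h_y\|_{\varphi}$. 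Applying Proposition~\ref{triangleprp} with $f = h_x$ and $g = h_y$ then produces $\|h_x + h_y\|_{\varphi} \le \|h_x\|_{\varphi} + \|h_y\|_{\varphi}$, and chaining these inequalities yields $h_{\Cpm}(x+y) \le h_{\Cpm}(x) + h_{\Cpm}(y)$. The standard representation of sublinear functions on $\R^n$ then delivers a unique compact convex set $\Cpm$ whose support function is $h_{\Cpm}$.

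There is no genuine obstacle here; the only subtle point is the combination of the monotonicity of $\varphi$ (needed to pass the pointwise bound $h_{x+y} \le h_x + h_y$ inside the integrand) with Proposition~\ref{triangleprp}, which itself encapsulates the convexity of $\varphi$.
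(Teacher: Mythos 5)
Your proof is correct and follows essentially the same route as the paper: verify positive homogeneity directly from \eqref{newdeff}, and obtain subadditivity by combining the pointwise bound $h_{x+y}\le h_x+h_y$, the monotonicity of $\varphi$, and the triangle inequality of Proposition~\ref{triangleprp}. The added remarks on finiteness and the case $r=0$ are fine but not a substantive difference.
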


\begin{proof}
From the right-hand side of (\ref{newdeff}) and the fact that support functions are homogeneous of degree 1, it is easy to see that $h_{\Cpm}(rx)=
rh_{\Cpm}(x)$, for $r\ge 0$ and $x\in \R^n$.

Using (\ref{newdeff}), the subadditivity of support functions, the fact that $\varphi$ is increasing in each variable, and Proposition~\ref{triangleprp}, we obtain
\begin{align*}
\lefteqn{h_{\Cpm}(x+y)}\\
&=\inf\left\{\lambda>0:\int_{\left({\K}^n_o\right)^m}\varphi
\left(\frac{h_{K_1}(x+y)}{\lambda}
,\dots,\frac{h_{K_m}(x+y)}{\lambda}\right)\, d\mu(K_1,\dots,K_m)\le 1\right\}\\
&\le\inf\left\{\lambda>0:\int_{\left({\K}^n_o\right)^m}\varphi
\left(\frac{h_{K_1}(x)+h_{K_1}(y)}{\lambda}
,\dots,\frac{h_{K_m}(x)+h_{K_m}(y)}{\lambda}\right)\, d\mu(K_1,\dots,K_m)\le 1\right\}\\
&=\|h_x+h_y\|_{\varphi}\le \|h_x\|_{\varphi} + \|h_y\|_{\varphi}=h_{\Cpm}(x)+h_{\Cpm}(y),
\end{align*}
for all $x,y\in \R^n$.  This completes the proof.
\end{proof}

In the special case when $m=1$, we conclude that
\begin{equation}
\label{spnewdeff}
h_{\Cpm}(x)
=\inf\left\{\lambda>0: \int_{{\K}^n_o}
{\varphi}\left(\frac{h_K(x)}{\lambda}\right)\, d\mu(K)\le 1\right\},
\end{equation}
for all $x\in \R^n$, defines the support function of a compact convex set $\Cpm$ in $\R^n$.

Before continuing, we present some important special cases of this construction when $m=1$.

\begin{ex}\label{measurex}
{\em
${\mathrm{(i)}}$ Let $K\in\K^n_{oo}$. In \cite{LYZ7}, the {\em Orlicz projection body} $\Pi_{\varphi}K$ of $K$ is defined by
\begin{equation}\label{Orlprojsupp}
h_{\Pi_{\varphi}K}(u)=\inf\left\{\lambda>0:\int_{S^{n-1}}\varphi
\left(\frac{|u\cdot v|}{\lambda h_K(v)}\right)\,d\overline{V}_n(K,v)\le 1\right\},
\end{equation}
for $u\in S^{n-1}$, where $\overline{V}_n(K,\cdot)$ is the normalized cone measure for $K$ defined by (\ref{normconem}).
%(Note that defining $h_{\Pi_{\varphi}K}(u)$ to be just the integral in (\ref{Orlprojsupp}) does not yield a support function, since the %homogeneity fails, in general.)
Define a measure $\pi_K$ in $\K^n_{o}$, concentrated on the set of $o$-symmetric line segments in $\R^n$, by
$$
\pi_K({\mathcal{L}})=\int_{S^{n-1}}1_{\left\{\left[-\frac{v}{h_K(v)},
\frac{v}{h_K(v)}\right]\in\, {\mathcal{L}}\right\}}\,d\overline{V}_n(K,v),
$$
for  Borel sets ${\mathcal{L}}\subset \K^n_{o}$. Then for all $x\in \R^n$, we have
$$
\int_{\K^n_o}
\varphi\left(\frac{h_L(x)}{\lambda}\right)\, d\pi_K(L)=\int_{S^{n-1}}
\varphi\left(\frac{|v\cdot x|}{\lambda h_K(v)}\right)\, d\overline{V}_n(K,v).
$$
Comparing (\ref{spnewdeff}) and (\ref{Orlprojsupp}), we see that
$$
\Cppi=\Pi_\varphi K.
$$
More generally, we can obtain a natural definition of Orlicz zonoids as a special case of (\ref{spnewdeff}).  Also, the {\em asymmetric Orlicz projection body} $\Pi_\varphi^+ K=\Cppip$ of $K$ (the Orlicz analog of the asymmetric $L_p$ projection body introduced by Ludwig \cite{Lud}) can be obtained by defining a measure $\pi_K^+$, concentrated on the set of line segments with one endpoint at the origin in $\R^n$, by
$$
\pi_K^+({\mathcal{L}})=\int_{S^{n-1}}1_{\left\{\left[o,
\frac{v}{h_K(v)}\right]\in\, {\mathcal{L}}\right\}}\,d\overline{V}_n(K,v),
$$
again for Borel sets ${\mathcal{L}}\subset \K^n_{o}$.

(ii) Let $K\in \K^n_{oo}$.  The {\em Orlicz centroid body} $\Gamma_{\varphi}K$ of $K$ is defined in \cite{LYZ8} (actually for any star body $K$) by
$$
h_{\Gamma_{\varphi}K}(u)=\inf\left\{\lambda>0:\frac{1}{V(K)}
\int_{K}\varphi\left(\frac{|u\cdot x|}{\lambda}\right)\,dx\le 1\right\},
$$
for $u\in S^{n-1}$.  Define a measure $\gamma_K$ in $\K^n_{o}$, concentrated on the set of $o$-symmetric line segments contained in $K$, by
$$
\gamma_K({\mathcal{L}})=\frac{1}{V(K)}\int_{K}
1_{\{[-x,x]\in\,{\mathcal{L}}\}}\,dx,
$$
for Borel sets ${\mathcal{L}}\subset \K^n_{o}$.  Then
$$
\Cpg=\Gamma_\varphi K.
$$
An asymmetric Orlicz centroid body $\Gamma_\varphi^+ K=\Cpgp$ can also be obtained by defining the measure $\gamma_K^+$ in the obvious way.}
\end{ex}

We now record a few properties of the construction that will be useful in the next section. For a linear map $A:\R^n\to\R^n$ and $K_1,\ldots,K_m\in\K^n_o$ we define a continuous (and hence Borel measurable) map $A:(\K^n_o)^m\to(\K^n_o)^m$ by
$$
A(K_1,\ldots,K_m)=(AK_1,\ldots,A K_m).
$$
If $\mu$ is a Borel measure in $(\K^n_o)^m$, then $A\mu$ denotes the image measure of $\mu$ under $A$, i.e.,
$$
(A\mu)(\mathcal{E})=\mu(A^{-1}\mathcal{E})=\int_{(\K^n_o)^m} 1_{\mathcal{E}}(AK_1,\ldots,AK_m)\, d\mu(K_1,\ldots,K_m),
$$
where $\mathcal{E}\subset (\K^n_o)^m$ is a Borel set.

\begin{lem}\label{constlem}
Let $\varphi\in \Phi_m$.

\smallskip

\noindent ${\mathrm{(i)}}$ Let $\mu$ and $\mu_i$, $i\in \N$, be finite measures in $\left({\K}^n_o\right)^m$ such that the supports of $\mu$ and $\mu_i$, $i\in\N$, are contained in a common compact subset of $\left({\K}^n_o\right)^m$, and $\mu_i\to \mu$ weakly as $i\to\infty$.  Then $\Cpmi\to \Cpm$ as $i\to\infty$ in the Hausdorff metric.

\smallskip

\noindent ${\mathrm{(ii)}}$  Let $\mu$ be a measure in $\left({\K}^n_o\right)^m$ such that \eqref{finitecondn} holds.
If $A:\R^n\to\R^n$ is a linear map, then
$$
A\left(\Cpm\right)=\CpAm.
$$
\end{lem}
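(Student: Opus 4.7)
The plan is to work directly with the defining formula (\ref{newdeff}) for $h_{\Cpm}$ in both cases. Part (ii) is a short algebraic calculation, while part (i) reduces to pointwise convergence of support functions together with a uniform bound on the sets $\Cpmi$.

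For part (ii), the main step is a change of variables in the integral. By the definition of the image measure $A\mu$,
$$
\int_{(\K^n_o)^m} \varphi\!\left(\frac{h_{K_1}(x)}{\lambda},\dots,\frac{h_{K_m}(x)}{\lambda}\right) d(A\mu)(K_1,\dots,K_m) = \int_{(\K^n_o)^m} \varphi\!\left(\frac{h_{AK_1}(x)}{\lambda},\dots,\frac{h_{AK_m}(x)}{\lambda}\right) d\mu(K_1,\dots,K_m),
$$
and (\ref{suppchange}) replaces each $h_{AK_j}(x)$ by $h_{K_j}(A^t x)$. Taking the infimum over those $\lambda>0$ making the integral at most $1$ on each side identifies the left-hand side with $h_{\CpAm}(x)$ and the right-hand side with $h_{\Cpm}(A^t x)=h_{A(\Cpm)}(x)$, the latter equality by a second application of (\ref{suppchange}). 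Uniqueness of support functions then yields $A(\Cpm)=\CpAm$.

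For part (i), let $\mathcal{C}\subset(\K^n_o)^m$ be a compact set containing the supports of $\mu$ and all $\mu_i$, and set
$$
F(\lambda,x)=\int_{\mathcal{C}}\varphi\!\left(\frac{h_{K_1}(x)}{\lambda},\dots,\frac{h_{K_m}(x)}{\lambda}\right)d\mu(K_1,\dots,K_m),
$$
with $F_i$ defined analogously. Since $\varphi$ is convex and finite on $[0,\infty)^m$, it is continuous there (as may be seen from the extension $\widetilde\varphi$ mentioned just before Proposition~\ref{triangleprp}), so the integrand is continuous and uniformly bounded on $\mathcal{C}$; weak convergence $\mu_i\to\mu$ then gives $F_i(\lambda,x)\to F(\lambda,x)$ for every fixed $\lambda>0$ and $x\in\R^n$. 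Each of $F(\cdot,x)$ and $F_i(\cdot,x)$ is continuous and nonincreasing in $\lambda$ and tends to $0$ as $\lambda\to\infty$. Setting $\lambda^*=h_{\Cpm}(x)$, a standard pinching argument then yields $h_{\Cpmi}(x)\to\lambda^*$: for $\lambda<\lambda^*$ one has $F(\lambda,x)>1$ (by definition of the infimum and continuity of $F$), so $F_i(\lambda,x)>1$ eventually, giving $h_{\Cpmi}(x)\ge\lambda$; for $\lambda>\lambda^*$ one has $F(\lambda,x)<1$ (see the next paragraph), so $F_i(\lambda,x)<1$ eventually, giving $h_{\Cpmi}(x)\le\lambda$. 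Since the compactness of $\mathcal{C}$ bounds all $h_{K_j}(x)$ uniformly for $x\in S^{n-1}$ and weak convergence of finite measures forces the total masses $\mu_i((\K^n_o)^m)$ to be bounded, the $\Cpmi$ lie in a common ball of $\R^n$, and pointwise convergence of their $1$-homogeneous support functions upgrades to Hausdorff convergence.

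The one point requiring real care is the strict inequality $F(\lambda,x)<1$ for $\lambda>\lambda^*$, since a priori $F(\cdot,x)$ could be constant and equal to $1$ on an interval, which would spoil the $\limsup$ step. I would rule this out using the convexity structure of $\varphi$. Suppose $F(\cdot,x)$ is constant on $[\lambda_1,\lambda_2]$ with $\lambda_1<\lambda_2$. Then $\varphi(h/\lambda_1)=\varphi(h/\lambda_2)$ for $\mu$-almost every $(K_1,\dots,K_m)$, where $h=(h_{K_1}(x),\dots,h_{K_m}(x))$. The restriction of $\varphi$ to the ray through $h$ is a convex, nondecreasing, nonnegative function on $[0,\infty)$ that vanishes at $0$; constancy at the two values $1/\lambda_2<1/\lambda_1$ forces the secant slope from $0$ to $h/\lambda_1$ to be at most $0$, hence $\varphi(h/\lambda_1)=0$. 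Integrating gives $F(\lambda_1,x)=0$, so any plateau of $F(\cdot,x)$ sits at the value $0$, not $1$. Combined with continuity and the defining infimum, this produces the required strict inequality and completes the pinching. Part (ii) presents no genuine obstacle.
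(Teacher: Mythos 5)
Your proof is correct and follows essentially the same route as the paper: for (i), convergence of the Orlicz--Minkowski integrals for each fixed $\lambda$ and $x$ via weak convergence and boundedness of the continuous integrand on the common compact support, then pointwise convergence of support functions upgraded to Hausdorff convergence; for (ii), the identical change-of-variables computation with \eqref{suppchange}. The paper simply asserts that pointwise convergence of the integrals yields $h_{\Cpmi}(x)\to h_{\Cpm}(x)$, so your pinching argument---in particular the convexity observation that any plateau of $\lambda\mapsto\int\varphi(h/\lambda)\,d\mu$ must sit at the value $0$ rather than $1$---supplies a genuine detail the paper leaves implicit, and it is carried out correctly.
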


\begin{proof}
(i) Let $\lambda>0$ and $x\in\R^n$. Then the map $(K_1,\ldots,K_m)\mapsto\varphi
\left(h_{K_1}(x)/\lambda,\ldots,h_{K_m}(x)/\lambda\right)$
is continuous on $\left(\mathcal{K}^n_o\right)^m$ and therefore bounded on a compact set containing the supports of $\mu$ and $\mu_i$, $i\in\N$. Since $\mu$ and $\mu_i$, $i\in\N$, are finite, these measures satisfy \eqref{finitecondn} for all $\lambda>0$ and $x\in\R^n$. Hence, for each $\lambda>0$ and $x\in\R^n$, we have
\begin{eqnarray*}
\lefteqn{\int_{\left({\K}^n_o\right)^m}\varphi\left(\frac{h_{K_1}(x)}{\lambda},
\dots,\frac{h_{K_m}(x)}{\lambda}\right)\, d\mu_i(K_1,\dots,K_m)}\\
&\to & \int_{\left({\K}^n_o\right)^m}\varphi\left(\frac{h_{K_1}(x)}{\lambda},\dots,
\frac{h_{K_m}(x)}{\lambda}\right)\, d\mu(K_1,\dots,K_m),
\end{eqnarray*}
as $i\to\infty$. It then follows from (\ref{newdeff}) that $h_{\Cpmi}(x)\to h_{\Cpm}(x)$ as $i\to\infty$.  Since pointwise convergence of support functions implies convergence in the Hausdorff metric (see \cite[Theorem~1.8.12]{Sch93}), we are done.

(ii) If $x\in \R^n$, then by (\ref{suppchange}) and (\ref{newdeff}),
\begin{align*}
h_{A\left(\Cpm\right)}(x)
&=h_{\Cpm}(A^tx)\\
&=\inf\left\{\lambda>0: \int_{\left({\K}^n_o\right)^m}\varphi\left(\frac{h_{K_1}(A^tx)}{\lambda}
,\dots,\frac{h_{K_m}(A^tx)}{\lambda}\right)\, d\mu(K_1,\dots,K_m)\le 1\right\}\\
&=\inf\left\{\lambda>0: \int_{\left({\K}^n_o\right)^m}\varphi\left(\frac{h_{AK_1}(x)}{\lambda}
,\dots,\frac{h_{AK_m}(x)}{\lambda}\right)\, d\mu(K_1,\dots,K_m)\le 1\right\}\\
&= \inf\left\{\lambda>0: \int_{\left({\K}^n_o\right)^m}\varphi\left(\frac{h_{K_1}(x)}{\lambda}
,\dots,\frac{h_{K_m}(x)}{\lambda}\right)\, d(A\mu)(K_1,\dots,K_m)\le 1\right\}\\
&=h_{\CpAm}(x).
\end{align*}
\end{proof}

\section{Orlicz addition}\label{Orlicz}

Recall the definition of $\Phi_m$ from Section~\ref{prelim}.  Let $m\ge 2$, let $\varphi\in \Phi_m$, and for $j=1,\dots,m$, let $K_j\in {{\K}}^n_o$. Define a measure $\mu$ in $\left({{\K}}^n_o\right)^m$ by
$$
\mu=\delta_{K_1}\times\cdots\times
\delta_{K_m}.
$$
The corresponding {\em Orlicz sum} of $K_1,\dots,K_m$ is defined to be $\Cpm$, where $\Cpm$ is as in (\ref{newdeff}), and is denoted by $+_{\varphi}(K_1,\dots,K_m)$.  This means that
\begin{equation}\label{OrlComb}
h_{+_{\varphi}(K_1,\dots,K_m)}(x)=\inf\left\{\lambda>0: \varphi\left(\frac{h_{K_1}(x)}{\lambda}
,\dots,\frac{h_{K_m}(x)}{\lambda}\right)\le 1\right\},
\end{equation}
for all $x\in \R^n$.

Equivalently, the Orlicz sum $+_{\varphi}(K_1,\dots,K_m)$ can be defined implicitly (and uniquely) by
\begin{equation}\label{Orldef}
\varphi\left( \frac{h_{K_1}(x)}{h_{+_{\varphi}(K_1,\dots,K_m)}(x)},\dots, \frac{h_{K_m}(x)}{h_{+_{\varphi}(K_1,\dots,K_m)}(x)}\right) = 1,
\end{equation}
if $h_{K_1}(x)+\cdots+h_{K_m}(x)>0$ and by $h_{+_{\varphi}(K_1,\dots,K_m)}(x)=0$ if $h_{K_1}(x)=\cdots=h_{K_m}(x)=0$, for all $x\in \R^n$. Note that $h_{+_{\varphi}(K_1,\dots,K_m)}(x)=0$ implies that $h_{K_1}(x)+\cdots+h_{K_m}(x)=0$ and hence that $h_{K_1}(x)=\cdots=h_{K_m}(x)=0$. Also, if $h_{K_j}(x)=0$ for all $j\neq j_0$, then \eqref{Orldef} and our assumptions on $\varphi$ yield $h_{+_{\varphi}(K_1,\dots,K_m)}(x)=h_{K_{j_0}}(x)$.

An important special case is obtained when
\begin{equation}\label{spphi}
\varphi(x_1,\dots,x_m)=\sum_{j=1}^m\varphi_j(x_j),
\end{equation}
for some fixed $\varphi_j\in \Phi=\Phi_1$, $j=1,\dots,m$, such that $\varphi_1(1)=\cdots=\varphi_m(1)=1$.  We then write
$+_{\varphi}(K_1,\dots,K_m)=K_1+_{\varphi}\cdots+_{\varphi}K_m$.  This means that $K_1+_{\varphi}\cdots+_{\varphi}K_m$ is defined either by
$$
h_{K_1+_{\varphi}\cdots+_{\varphi}K_m}(x)=
\inf\left\{\lambda>0:\sum_{j=1}^m\varphi_j\left(\frac{h_{K_j}(x)}
{\lambda}\right)\le 1\right\},
$$
for all $x\in \R^n$, or by the corresponding special case of (\ref{Orldef}).

\begin{rem}\label{Remark4.2}
{\em Suppose that $\varphi(x_1,x_2)=\varphi_1(x_1)+\varphi_2(x_2)$, where $\varphi_1,\varphi_2\in \Phi$ are such that $\tau=\tau(\varphi_2)>0$ (see (\ref{tau})).  Suppose also that $K,L\in{\mathcal K}^n_{o}$ satisfy $h_L(x)\le \tau h_K(x)$, for some $x\neq o$.  Then $h_{K+_{\varphi} L}(x)=h_K(x)$, since
$$
\varphi_1\left(\frac{h_K(x)}{h_K(x)}\right)+\varphi_2\left(\frac{h_L(x)}{h_K(x)}
\right)=\varphi_1(1)+0=1,
$$
if $h_K(x)>0$ (compare (\ref{Orldef})).  In particular, if $K\in\K^n_{oo}$ and if $L\in\K^n_o$ is contained in a sufficiently small ball, then $K+_{\varphi} L=K$.}
\end{rem}

The following theorem establishes some properties (defined in Section~\ref{Maddition}) of Orlicz addition.

\begin{thm}\label{Orthm1}
If $\varphi\in \Phi_m$, then  Orlicz addition $+_{\varphi}:\left({\mathcal K}^n_{o}\right)^m\rightarrow {\mathcal K}^n_{o}$ is monotonic, continuous,  $GL(n)$ covariant, projection covariant, has the identity property, and $+_{\varphi}:\left({\mathcal{K}}^n_s\right)^m\rightarrow {\mathcal{K}}^n_s$.
\end{thm}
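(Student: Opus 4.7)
The plan is to use the identification $+_\varphi(K_1,\dots,K_m)=\Cpm$ with $\mu=\delta_{K_1}\times\cdots\times\delta_{K_m}$, which reduces the definition \eqref{newdeff} to \eqref{OrlComb}; most assertions then follow from results already established in Sections~\ref{Maddition} and~\ref{Onorms}.

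First I would confirm that the image lies in $\K^n_o$. Lemma~\ref{OrlMinkInt} gives that $+_\varphi(K_1,\dots,K_m)$ is a compact convex set in $\R^n$. Since $o\in K_j$ forces $h_{K_j}(x)\ge 0$ and $\lambda>0$ in the infimum in \eqref{OrlComb}, we have $h_{+_\varphi(K_1,\dots,K_m)}(x)\ge 0$ for every $x\in\R^n$, hence $o\in +_\varphi(K_1,\dots,K_m)$. Monotonicity is immediate from \eqref{OrlComb} together with the fact that $\varphi$ is increasing in each variable: if $K_j\subset L_j$ for every $j$, then every admissible $\lambda$ for the $L_j$'s is also admissible for the $K_j$'s, whence $h_{+_\varphi(K_1,\dots,K_m)}\le h_{+_\varphi(L_1,\dots,L_m)}$. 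Preservation of $o$-symmetry is equally direct, as the evenness $h_{K_j}(-x)=h_{K_j}(x)$ is inherited by $h_{+_\varphi(K_1,\dots,K_m)}$ through \eqref{OrlComb}. For the identity property, setting $K_i=\{o\}$ for $i\ne j_0$ reduces the infimum in \eqref{OrlComb} to $\inf\{\lambda>0:\varphi((h_{K_{j_0}}(x)/\lambda)\,e_{j_0})\le 1\}$; convexity of $\varphi$ combined with $\varphi(o)=0$ and $\varphi(e_{j_0})=1$ gives $\varphi(te_{j_0})\ge t$ for $t\ge 1$, so this infimum equals $h_{K_{j_0}}(x)$.

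Next I would deduce $GL(n)$ covariance from Lemma~\ref{constlem}(ii): the only point to verify is that the image measure of $\delta_{K_1}\times\cdots\times\delta_{K_m}$ under $(L_1,\dots,L_m)\mapsto(AL_1,\dots,AL_m)$ is $\delta_{AK_1}\times\cdots\times\delta_{AK_m}$, and the lemma then yields $A(+_\varphi(K_1,\dots,K_m))=+_\varphi(AK_1,\dots,AK_m)$. For continuity I would invoke Lemma~\ref{constlem}(i): if $K_j^{(i)}\to K_j$ in the Hausdorff metric as $i\to\infty$ for each $j$, then the product Diracs $\delta_{K_1^{(i)}}\times\cdots\times\delta_{K_m^{(i)}}$ converge weakly and have supports in a common compact subset of $(\K^n_o)^m$. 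Finally, projection covariance follows from Lemma~\ref{CGimpliesPm}, since $\K^n_o$ is closed under the action of $GL(n)$ and under Hausdorff limits.

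The proof is essentially bookkeeping; the one slightly subtle point is the identity property, where one must use convexity of $\varphi\in\Phi_m$ together with the normalizations $\varphi(o)=0$ and $\varphi(e_{j_0})=1$ to conclude $\varphi(te_{j_0})>1$ for every $t>1$, so that the infimum in \eqref{OrlComb} is unambiguously $h_{K_{j_0}}(x)$.
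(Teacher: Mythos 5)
Your proposal is correct and follows essentially the same route as the paper: it reduces everything to the $\Cpm$ construction with $\mu=\delta_{K_1}\times\cdots\times\delta_{K_m}$ and invokes Lemma~\ref{constlem} for continuity and $GL(n)$ covariance, with projection covariance then coming from Lemma~\ref{CGimpliesPm}. The only (harmless) variations are that you prove monotonicity directly from the inclusion of admissible $\lambda$-sets in \eqref{OrlComb} rather than from the implicit equation \eqref{Orldef}, and you get preservation of $o$-symmetry from evenness of support functions rather than from $GL(n)$ covariance applied to $Ax=-x$; your treatment of the identity property via $\varphi(te_{j_0})\ge t$ for $t\ge 1$ is a correct expansion of what the paper leaves as ``obvious.''
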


\begin{proof}
We first claim that $+_{\varphi}$ is monotonic.  To see this, let $K_j\subset L_j$, where $K_j,L_j\in {\mathcal K}^n_{o}$, $j=1,\dots,m$. Let $x\in\R^n$. If $h_{K_1}(x)=\cdots=h_{K_m}(x)=0$, then $h_{+_{\varphi}(K_1,\dots,K_m)}(x)=0\le h_{+_{\varphi}(L_1,K_2,\dots,K_m)}(x)$. If $h_{K_1}(x)+\cdots+h_{K_m}(x)>0$, then $h_{L_1}(x)+h_{K_2}(x)+\cdots+h_{K_m}(x)>0$ and using (\ref{Orldef}), $K_1\subset L_1$, and the fact that $\varphi$ is increasing in the first variable, we obtain
\begin{eqnarray*}
\lefteqn{ \varphi\left( \frac{h_{L_1}(x)}{h_{+_{\varphi}(L_1,K_2,\dots,K_m)}(x)},
\frac{h_{K_2}(x)}{h_{+_{\varphi}(L_1,K_2,\dots,K_m)}(x)},
\dots,\frac{h_{K_m}(x)}{h_{+_{\varphi}(L_1,K_2,\dots,K_m)}(x)}\right)
}\\
&=&1=\varphi\left( \frac{h_{K_1}(x)}{h_{+_{\varphi}(K_1,K_2,\dots,K_m)}(x)},
\frac{h_{K_2}(x)}{h_{+_{\varphi}(K_1,K_2,\dots,K_m)}(x)},
\dots,\frac{h_{K_m}(x)}{h_{+_{\varphi}(K_1,K_2,\dots,K_m)}(x)}\right)\\
&\le&  \varphi\left( \frac{h_{L_1}(x)}{h_{+_{\varphi}(K_1,K_2,\dots,K_m)}(x)},
\frac{h_{K_2}(x)}{h_{+_{\varphi}(K_1,K_2,\dots,K_m)}(x)},
\dots,\frac{h_{K_m}(x)}{h_{+_{\varphi}(K_1,K_2,\dots,K_m)}(x)}\right),
\end{eqnarray*}
which again implies that $h_{+_{\varphi}(K_1,\dots,K_m)}(x)\le h_{+_{\varphi}(L_1,K_2,\dots,K_m)}(x)$. By repeating this argument for each of the other $m-1$ variables, we obtain $h_{+_{\varphi}(K_1,\dots,K_m)}(x)\le h_{+_{\varphi}(L_1,\dots,L_m)}(x)$.  This proves the claim.

Next, we claim that $+_{\varphi}$ is continuous.  Indeed, let $K_{ij}\in {\mathcal K}^n_{o}$, $i\in \N\cup\{0\}$, $j=1,\dots,m$, be such that $K_{ij}\to K_{0j}$ as $i\to \infty$, and let $x\in\R^n$.  If
$\mu_{i}=\delta_{K_{i1}}\times\cdots\times
\delta_{K_{im}}$,
for $i\in \N\cup\{0\}$, and $f:\left({\mathcal K}^n_{o}\right)^m\to\R$ is continuous, then
\begin{eqnarray*}
\int_{\left({\mathcal K}^n_{o}\right)^m}f(L_1,\dots,L_m)\,d\mu_{i}(L_1,\dots,L_m)&=&
f(K_{i1},\dots,K_{im})\to
f(K_{01},\dots,K_{0m})\\
&=&\int_{\left({\mathcal K}^n_{o}\right)^m}f(L_1,\dots,L_m)\,d\mu_{0}(L_1,\dots,L_m),
\end{eqnarray*}
as $i\to \infty$.  It follows that $\mu_i\to \mu_0$ weakly.  By Lemma~\ref{constlem}(i), $+_{\varphi}(K_{i1},\dots,K_{im})=\Cpmi\to \Cpmo=+_{\varphi}(K_{01},\dots,K_{0m})$ as $i\to\infty$ in the Hausdorff metric, as required.

Let $K_j\in {\mathcal K}^n_{o}$, $j=1,\dots,m$, and let $A:\R^n\to\R^n$ be linear. If $\mu=\delta_{K_{1}}\times\cdots\times
\delta_{K_{m}}$, then by Lemma~\ref{constlem}(ii), we have
$$A(+_{\varphi}(K_{1},\dots,K_{m}))=A\left(\Cpm\right)
=\CpAm=+_{\varphi}(AK_{1},\dots,AK_{m}),$$
since  $A\mu= A\left(\delta_{K_{1}}\times\cdots\times
\delta_{K_{m}}\right)=\delta_{A K_{1}}\times\cdots\times
\delta_{A K_{m}}$.  In particular, it follows that $+_\varphi$ is $GL(n)$ covariant and projection covariant. Of course, the latter is also a consequence of the continuity and $GL(n)$ covariance of $+_\varphi$, by Lemma~\ref{CGimpliesPm}.

The identity property is obvious from (\ref{Orldef}) and the remarks thereafter.

The $GL(n)$ covariance of $+_{\varphi}$, applied with the transformation $Ax=-x$, $x\in \R^n$, shows that $+_{\varphi}:\left({\mathcal{K}}^n_s\right)^m\rightarrow {\mathcal{K}}^n_s$.
\end{proof}

The next theorem shows that Orlicz sums can also be considered as $M$-sums.

\begin{thm}\label{Orthm2}
If $\varphi\in \Phi_m$, a 1-unconditional convex body $J_{\varphi}$ in $[-1,1]^m$, containing $e_1,\ldots,e_m$ in its boundary, is defined by
\begin{equation}\label{Jphi}
J_{\varphi}\cap [0,\infty)^m=\{(x_1,\ldots,x_m)\in [0,1]^m: \varphi(x_1,\ldots,x_m)\le 1\}.
\end{equation}
Then $+_{\varphi}:\left({\mathcal{K}}^n_s\right)^m\rightarrow {\mathcal{K}}^n_s$ (or $+_{\varphi}:\left({\mathcal{K}}^n_o\right)^m\rightarrow {\mathcal{K}}^n_o$) is $M$-addition $\oplus_M$ with $M=J_{\varphi}^{\circ}$ (or $M=J_{\varphi}^{\circ}\cap [0,\infty)^m$, respectively). Moreover, the formula
\begin{equation}\label{nnnn}
h_{+_\varphi (K_1,\ldots,K_m)}(x)=h_{J_{\varphi}^{\circ}}\left(h_{K_1}(x),\ldots,h_{K_m}(x)\right),
\end{equation}
holds for all $K_1,\ldots,K_m\in {\mathcal{K}}^n_o$ and $x\in \R^n$.
\end{thm}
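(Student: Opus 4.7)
The plan is to first verify that $J_\varphi$ is a well-defined $1$-unconditional convex body whose Minkowski gauge functional on $[0,\infty)^m$ matches the right-hand side of the defining relation (\ref{OrlComb}); this will directly yield the support-function identity (\ref{nnnn}). The $M$-addition identification will then follow from (\ref{nn}) in the $\K^n_s$ setting, and from a direct unpacking of the definition (\ref{Mndef}) in the $\K^n_o$ setting.

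First I would check that $J_\varphi$ is well defined. Extending $\varphi$ to $\widetilde\varphi:\R^m\to[0,\infty)$ by $\widetilde\varphi(x)=\varphi(|x_1|,\ldots,|x_m|)$, the remark preceding Proposition~\ref{triangleprp} shows that $\widetilde\varphi$ is convex on $\R^m$, so the sublevel set $\{x\in\R^m:\widetilde\varphi(x)\le 1\}$ is closed, convex, and $1$-unconditional. Convexity of $\varphi$ combined with $\varphi(o)=0$ and $\varphi(e_j)=1$ yields $\varphi(te_j)\ge t$ for $t\ge 1$; together with the monotonicity of $\varphi$ in each variable, this forces $|y_j|\le 1$ whenever $\widetilde\varphi(y)\le 1$, so the sublevel set already lies in $[-1,1]^m$ and coincides with $J_\varphi$ as defined by (\ref{Jphi}). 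Continuity of $\varphi$ at $o$ (as a finite convex function on $[0,\infty)^m$) places $o$ in $\inte J_\varphi$, so $J_\varphi$ is a convex body, and $\varphi(e_j)=1$ puts $e_j\in\partial J_\varphi$.

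Next I would compute the support function of $J_\varphi^\circ$ on $[0,\infty)^m$. Since $o\in\inte J_\varphi$, the polar satisfies $h_{J_\varphi^\circ}(y)=\inf\{\lambda>0:y\in\lambda J_\varphi\}$ for all $y\in\R^m$. For $y\in[0,\infty)^m$ and $\lambda>0$, the condition $y\in\lambda J_\varphi$ reduces to $\varphi(y_1/\lambda,\ldots,y_m/\lambda)\le 1$, the constraint $y/\lambda\in[-1,1]^m$ being automatic by the bound above. Substituting $y_j=h_{K_j}(x)\ge 0$ (valid because $o\in K_j$) and comparing with (\ref{OrlComb}) proves (\ref{nnnn}). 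Applying (\ref{nn}) with the $1$-unconditional compact convex body $M=J_\varphi^\circ$ then gives $+_\varphi=\oplus_{J_\varphi^\circ}$ on $(\K^n_s)^m$. For the $(\K^n_o)^m$ case, taking $M'=J_\varphi^\circ\cap[0,\infty)^m\subset[0,\infty)^m$, the non-negativity of $h_{K_j}$ and a direct unpacking of (\ref{Mndef}) give $h_{\oplus_{M'}(K_1,\ldots,K_m)}(x)=h_{M'}(h_{K_1}(x),\ldots,h_{K_m}(x))$; the $1$-unconditionality of $J_\varphi^\circ$ ensures $h_{M'}$ agrees with $h_{J_\varphi^\circ}$ on $[0,\infty)^m$, and (\ref{nnnn}) closes the argument.

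I expect the most delicate point to be verifying that the $[-1,1]^m$ restriction in (\ref{Jphi}) is redundant, i.e., that $\varphi(y/\lambda)\le 1$ already forces $y_j/\lambda\le 1$. Without this, the gauge of $J_\varphi$ would fail to match the Orlicz-sum infimum and (\ref{nnnn}) would break down. The key ingredient is the elementary convexity bound $\varphi(te_j)\ge t$ for $t\ge 1$, which uses precisely the normalization $\varphi(e_j)=1$ built into $\Phi_m$; the remaining steps are routine assemblies of standard facts about polars, gauges, and the support-function formula (\ref{nn}) for $M$-addition.
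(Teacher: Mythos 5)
Your proposal is correct, and it takes a genuinely more direct route than the paper. The paper first invokes its characterization machinery: projection covariance of $+_{\varphi}$ (Theorem~\ref{Orthm1}) together with Theorem~\ref{Thmnew7.6} guarantees that $+_{\varphi}$ on $\left(\mathcal{K}^n_s\right)^m$ is $\oplus_M$ for \emph{some} $1$-unconditional compact convex $M$, which is then identified as $J_{\varphi}^{\circ}$ by testing on the segments $[-w_je_1,w_je_1]$; the $\mathcal{K}^n_o$ case requires a further reduction via Theorem~\ref{thm275b}, auxiliary bodies $K_j'$ with $h_{K_j'}(-x)=0$, and a projection of a set $M\subset\R^{2m}$. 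You instead compute $h_{J_{\varphi}^{\circ}}$ on $[0,\infty)^m$ as the Minkowski gauge of $J_{\varphi}$ and match it term-for-term with the infimum in \eqref{OrlComb}, which yields \eqref{nnnn} at once for all $K_j\in\mathcal{K}^n_o$ and treats both cases uniformly; your key observation that $\varphi(y/\lambda)\le 1$ already forces $y/\lambda\in[0,1]^m$ (via $\varphi(te_j)\ge t$ for $t\ge 1$) is exactly what makes the two infima coincide, and it is correct. Your route is shorter and self-contained, at the cost of not reproving the structural fact that $+_{\varphi}$ \emph{must} be an $M$-addition (which the paper gets for free, along with uniqueness of $M$, from Theorem~\ref{Thmnew7.6}). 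Two small points you should make explicit: the parenthetical ``as a finite convex function on $[0,\infty)^m$'' is not quite the right justification for continuity at the boundary point $o$ --- what you actually need is that $\widetilde\varphi$ is finite and convex on all of $\R^m$, hence continuous at $o$; and to pass from the support-function identity to the set identity $+_{\varphi}=\oplus_{M}$ you must also know that $\oplus_{J_{\varphi}^{\circ}}(K_1,\ldots,K_m)$ and $\oplus_{J_{\varphi}^{\circ}\cap[0,\infty)^m}(K_1,\ldots,K_m)$ are themselves convex (not merely that their convex hulls have the right support function); this is supplied by \cite[Theorem~6.1 and Corollary~6.4]{GHW} as quoted in Section~\ref{Maddition}, so it is available but should be cited.
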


\begin{proof}
We claim that the set $C$ defined by the right-hand side of (\ref{Jphi}) is a convex body contained in $[0,1]^m$ and containing $e_1,\ldots,e_m$ in its boundary.  To see this, note that $C$ is convex since it is a sublevel set of a convex function. Also, $\varphi(e_j)=1$ for $j=1,\ldots,m$, since $\varphi\in \Phi_m$, so $e_1,\ldots,e_m\in C$. Then the fact that $\varphi$ is increasing in each variable ensures that $C\subset [0,1]^m$ and that $(\alpha_1x_1,\ldots,\alpha_mx_m)\in C$ whenever
$(x_1,\ldots,x_m)\in C$ and $\alpha_1,\ldots,\alpha_m\in[0,1]$.  This proves the claim.  Moreover, it follows that
$$J_{\varphi}=\{(\alpha_1x_1,\ldots,\alpha_mx_m): (x_1,\ldots,x_m)\in C, |\alpha_j|\le 1, j=1,\ldots,m\}$$
is a 1-unconditional convex body such that $J_{\varphi}\cap [0,\infty)^m=C$.  Then $J_{\varphi}^{\circ}$ is also a 1-unconditional convex body.

Next, we prove that $+_{\varphi}:\left({\mathcal{K}}^n_s\right)^m\rightarrow {\mathcal{K}}^n_s$ is $M$-addition with $M=J_{\varphi}^\circ$. Since $+_{\varphi}$ is projection covariant by Theorem~\ref{Orthm1}, it is $M$-addition for some 1-unconditional compact convex subset $M$ of $\R^m$, by Theorem~\ref{Thmnew7.6}. We have to show that $M=J_{\varphi}^{\circ}$.  To see this, note first that by (\ref{nn}) we have
\begin{equation}\label{ngood}
h_{+_{\varphi} (K_1,\ldots,K_m)}(x)=h_M\left(h_{K_1}(x),\ldots,h_{K_m}(x)\right),
\end{equation}
for all $K_1,\ldots,K_m\in {\mathcal{K}}^n_s$ and $x\in \R^n$. Suppose that $w=(w_1,\dots,w_m)\in(0,\infty)^m$ and let $K_j=[-w_je_1,w_je_1]\in\K^n_s$, for $j=1,\ldots,m$,  and $x=e_1$.  Then by (\ref{Orldef}) and \eqref{ngood}, we have
\begin{equation}\label{eq2}
\varphi\left( \frac{w}{h_M(w)}\right) = 1.
\end{equation}
Thus $h_M(w)$ is the number such that $w/h_M(w)$ belongs to the boundary of ${J_{\varphi}}$. But by the definition (\ref{radialfunction}) of the radial function, this means that
$$h_M(w)=\frac{1}{\rho_{J_{\varphi}}(w)}=h_{J_{\varphi}^{\circ}}(w).$$
(The latter equation holds by \cite[(0.36), p.~20]{Gar06}, for example, because $J_{\varphi}$ is a convex body with $o\in \inte J_{\varphi}$.) Therefore $h_M=h_{J_{\varphi}^{\circ}}$ on $(0,\infty)^m$, and by continuity also on $[0,\infty)^m$.  Since $J_{\varphi}^{\circ}$ and $M$ are 1-unconditional, this proves that $M=J_{\varphi}^{\circ}$.

Now suppose that $+_{\varphi}:\left({\mathcal{K}}^n_o\right)^m\rightarrow {\mathcal{K}}^n_o$.  The projection covariance of $+_{\varphi}$ follows as before from Theorem~\ref{Orthm1}, so by (\ref{nnnb}), we have
$$
h_{+_\varphi(K_1,\ldots,K_m)}(x)=h_{M}\left(h_{K_1}(-x),h_{K_1}(x),\ldots,h_{K_m}(-x),h_{K_m}(x)\right),
$$
for some nonempty closed convex set $M$ in $\R^{2m}$ and all $K_1,\ldots,K_m\in {\mathcal{K}}^n_o$ and $x\in\R^n$. Let $x\in \R^n$ and $K_1,\ldots,K_m\in {\mathcal{K}}^n_o$.  Choose $K'_1,\ldots,K_m'\in {\mathcal{K}}^n_o$ such that $h_{K'_1}(x)=h_{K_1}(x),\ldots, h_{K_m'}(x)=h_{K_m}(x)$, and $h_{K'_1}(-x)=\cdots=h_{K_m'}(-x)=0$.  By (\ref{Orldef}), we have
$$
h_{+_\varphi(K_1,\ldots,K_m)}(x) = h_{+_\varphi(K_1',\ldots,K_m')}(x)
$$
and hence
\begin{align*}
h_M(h_{K_1}(-x),h_{K_1}(x),\ldots,h_{K_m}(-x),h_{K_m}(x))&= h_M(h_{K'_1}(-x),h_{K'_1}(x),\ldots,h_{K_m'}(-x),h_{K_m'}(x))\\
&= h_M(0,h_{K_1}(x),\ldots,0,h_{K_m}(x)).
\end{align*}
Replacing $M$ by its projection on the $\{x_2,x_4,\ldots,x_{2m}\}$-plane in $\R^{2m}$, using (\ref{hproj}), and identifying the latter with $\R^m$, we see that (\ref{ngood}) holds for $K_1,\ldots,K_m\in \K^n_{o}$.  Following the proof above for the $o$-symmetric case, we obtain
$h_M =h_{J_{\varphi}^{\circ}} $ on $(0,\infty)^m$.  Since support functions are continuous and $h_{K_1}(x),\ldots,h_{K_m}(x)\ge 0$ for all $K_1,\ldots,K_m\in {\mathcal{K}}^n_o$ and $x\in\R^n$, (\ref{nnnn}) follows.

Observe that (\ref{nnnn}) is unaffected if $J_{\varphi}^{\circ}$ is replaced by $J_{\varphi}^{\circ}\cap [0,\infty)^m$, since $J_{\varphi}^{\circ}$ is 1-unconditional.  By \cite[Theorem~6.1(ii)]{GHW}, when
$M=J_{\varphi}^{\circ}\cap [0,\infty)^m$, $\oplus_M$ maps $\left({\mathcal{K}}^n_o\right)^2$ to ${\mathcal{K}}^n_o$, and by \cite[Theorem~6.5(i)]{GHW}, we have
$$
h_{\oplus_{J_{\varphi}^{\circ}\cap [0,\infty)^m}(K_1,\ldots,K_m)}(x)=
h_{J_{\varphi}^{\circ}\cap [0,\infty)^m}\left(h_{K_1}(x),\ldots,h_{K_m}(x)\right),$$
for all $K_1,\ldots,K_m\in {\mathcal{K}}^n_o$ and $x\in\R^n$.
It follows that $+_{\varphi}:\left({\mathcal{K}}^n_o\right)^m\rightarrow {\mathcal{K}}^n_o$ is $M$-addition with $M=J_{\varphi}^{\circ}\cap [0,\infty)^m$.
\end{proof}

Clearly, we have $\text{conv}\{\pm e_1,\ldots,\pm e_m\}\subset J_{\varphi}\subset
[-1,1]^m$.  Theorem~\ref{Orthm2} raises the question as to which convex bodies have a boundary representation of the form (\ref{Jphi}) and which special forms it might take.  The next two results address this problem.

\begin{thm}\label{inverseTh4.4general}
Let $K$ be a $1$-unconditional convex body in $\R^m$ that contains $e_1,\ldots,e_m$ in its boundary. Then there
is a homogeneous of degree 1 function $\varphi\in\Phi_m$ such that
\begin{equation}\label{Th4.4neweq}
\partial K\cap [0,1]^m=\left\{x\in [0,1]^m:
\varphi(x)= 1\right\}.
\end{equation}
\end{thm}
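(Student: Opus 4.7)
The natural candidate for $\varphi$ is the gauge of $K$, or equivalently, the support function of its polar restricted to the positive orthant. Since $K$ is $1$-unconditional and contains $e_1,\ldots,e_m$, it also contains $-e_1,\ldots,-e_m$, and hence by convexity it contains the cross-polytope $\conv\{\pm e_1,\ldots,\pm e_m\}$, which has $o$ in its interior. Therefore $o\in\inte K$, so $K^{\circ}$ is a convex body with $o\in\inte K^{\circ}$. I would set
$$
\varphi(x)=h_{K^{\circ}}(x)\quad\text{for all }x\in[0,\infty)^m.
$$

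The verification that $\varphi\in\Phi_m$ and is homogeneous of degree $1$ then splits into a few checks, most of which are immediate. Sublinearity of support functions gives both convexity and homogeneity of degree $1$; nonnegativity and $\varphi(o)=0$ are clear; and $\varphi(e_j)=h_{K^{\circ}}(e_j)=1/\rho_K(e_j)=1$ follows from $e_j\in\partial K$ via the standard formula $h_{K^{\circ}}=1/\rho_K$ on $\R^m\setminus\{o\}$ (see \cite[(0.36), p.~20]{Gar06}, as already used in the proof of Theorem~\ref{Orthm2}). The one point requiring care is that $\varphi$ be increasing in each variable. Since $K^{\circ}$ inherits $1$-unconditionality from $K$, for every $y\in K^{\circ}$ the point $(|y_1|,\ldots,|y_m|)$ also lies in $K^{\circ}$. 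Thus, for $w\in[0,\infty)^m$, the supremum $h_{K^{\circ}}(w)=\sup\{w\cdot y:y\in K^{\circ}\}$ is already attained on $K^{\circ}\cap [0,\infty)^m$, and if $0\le w_j\le w_j'$ for each $j$, then $w\cdot y\le w'\cdot y$ for every $y\in K^{\circ}\cap[0,\infty)^m$, giving $\varphi(w)\le\varphi(w')$.

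For the boundary representation \eqref{Th4.4neweq}, I would appeal to the standard fact that a convex body $K$ with $o\in\inte K$ satisfies $\partial K=\{x\in\R^m:h_{K^{\circ}}(x)=1\}$ (equivalently $\rho_K(x)=1$). Intersecting with $[0,1]^m$ and using the definition of $\varphi$ yields precisely the claimed equality. The only mildly nontrivial step in the whole argument is the monotonicity of $\varphi$, and even there the work is reduced to exploiting the $1$-unconditionality of $K^{\circ}$, so no serious obstacle is anticipated.
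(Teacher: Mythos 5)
Your proposal is correct and takes essentially the same approach as the paper: both define $\varphi$ as the restriction of the gauge $h_{K^{\circ}}$ to $[0,\infty)^m$ and deduce convexity, homogeneity, normalization, and the boundary representation from standard facts about support and radial functions. The only (immaterial) difference is the monotonicity step, where the paper uses the $1$-unconditionality of $K$ directly (observing that $x\in\sum_{j=1}^m[o,y_je_j]\subset\lambda K$ when $x\le y$ componentwise and $h_{K^{\circ}}(y)=\lambda$), whereas you pass to the $1$-unconditionality of $K^{\circ}$ and restrict the defining supremum to $K^{\circ}\cap[0,\infty)^m$.
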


\begin{proof}
Let $K$ be as in the statement of the theorem. Then $o\in \inte K$ and
$$
h_{K^\circ}(x)=\frac{1}{\rho_K(x)}=\min\{\lambda\ge 0:x\in\lambda K\},
$$
for $x\in\R^n$, is the gauge function of $K$. Let $\varphi$ denote the restriction of $h_{K^\circ}$ to $[0,\infty)^m$. Since $h_{K^\circ}$ is convex and homogeneous of degree 1, $\varphi$ has the same properties. Moreover, since $K\subset[-1,1]^m$, $K=\{x\in\R^m:h_{K^\circ}(x)\le 1\}$, and $\partial K=\{x\in\R^m:h_{K^\circ}(x)= 1\}$, \eqref{Th4.4neweq} holds and $\varphi(e_j)=1$ for $j=1,\ldots,m$. Finally, we show that $\varphi$ is increasing in each variable and hence $\varphi\in \Phi_m$. Let $x=(x_1,\dots,x_m)\in [0,\infty)^m$ and $y=(y_1,\dots,y_m)\in [0,\infty)^m$ be such that $x_j\le y_j$ for $j=1,\ldots,m$. Let $\lambda\ge 0$ be such that $y\in\lambda \partial K$, i.e., $h_{K^\circ}(y)=\lambda$. Since $K$ is 1-unconditional, we have $x\in \sum_{j=1}^m[o,y_je_j]\subset \lambda K$, and therefore $\varphi(x)=h_{K^\circ}(x)\le \lambda=h_{K^\circ}(y)=\varphi(y)$.
\end{proof}

When $m=2$, the next result supplies special forms for $\varphi$, which show that with a single exception, \eqref{spphi} represents the general situation.

\begin{thm}\label{inverseTh4.4}
Let $K$ be a $1$-unconditional convex body in $\R^2$ that contains $e_1$ and $e_2$ in its boundary. If $K=[-1,1]^2$, then $\partial K\cap [0,1]^2$ is given by the equation
\begin{equation}\label{phimax}
\varphi(x_1,x_2)=\max\{x_1,x_2\}=1,
\end{equation}
for $x_1,x_2\ge 0$.  Otherwise, there are $\varphi_1, \varphi_2\in\Phi$ such that $\partial K\cap [0,1]^2$ is given by the equation
\begin{equation}\label{Jphi2}
\varphi(x_1,x_2)=\varphi_1(x_1)+\varphi_2(x_2)=1,
\end{equation}
for $x_1,x_2\ge 0$.  If $K\neq [-1,1]^2$ is also symmetric with respect to $x_1=x_2$, then there is a $\varphi_0\in \Phi$ such that \eqref{Jphi2} holds with $\varphi_1=\varphi_2=\varphi_0$.
\end{thm}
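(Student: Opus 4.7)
The plan is to address the three claims in turn, relying on a common description of $\partial K \cap [0,1]^2$ in terms of a graph function. First, I would verify that $K \subset [-1,1]^2$: if $h_K(e_1) > 1$, then by $1$-unconditional symmetry and $o \in \inte K$ one finds a two-dimensional neighborhood of $e_1$ inside $K$ (for instance the interior of the rhombus $\conv\{\pm t e_1, \pm r e_2\}$ for suitable $t > 1$ and small $r > 0$), contradicting $e_1 \in \partial K$. With $K \subset [-1,1]^2$, the function
\[
g(s) = \max\{t \ge 0 : (s,t) \in K\}
\]
is well defined on $[0,1]$; by $1$-unconditional symmetry and convexity of $K$ it is concave and non-increasing, with $g(0)=1$, $g(1)=t_0 \in [0,1]$, and the hypothesis $K \neq [-1,1]^2$ is equivalent to $t_0 < 1$. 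Letting $s_1 = \max\{s \in [0,1] : g(s) = 1\} \in [0,1)$, concavity forces $g$ to be strictly decreasing on $[s_1,1]$ (no interior horizontal plateaus, by an easy slope comparison). The curve $\partial K \cap [0,1]^2$ then decomposes into the horizontal segment $[0,s_1] \times \{1\}$, the arc $\{(s,g(s)) : s \in [s_1,1]\}$, and the vertical segment $\{1\} \times [0,t_0]$.

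For the first claim, $K = [-1,1]^2$ gives $s_1 = 1$, $t_0 = 1$, and the boundary in $[0,1]^2$ collapses to the L-shape $\{\max(x_1,x_2) = 1\}$, represented by $\max \in \Phi_2$. For the second, the natural candidate is
\[
\varphi_1(s) = \frac{1-g(s)}{1-t_0}, \quad s \in [0,1], \qquad \varphi_2(t) = \frac{(t-t_0)_+}{1-t_0}, \quad t \in [0,\infty),
\]
with $\varphi_1$ extended to $[1,\infty)$ by a convex rule. Convexity and monotonicity on $[0,1]$ are inherited from the convexity of $-g$ and the piecewise-linear form of $\varphi_2$, and the identity $\varphi_1(x_1) + \varphi_2(x_2) = 1$ on each of the three boundary pieces is a short calculation (note that $\varphi_1 \equiv 0$ on $[0,s_1]$ since $g \equiv 1$ there, and $\varphi_2 \equiv 0$ on $[0,t_0]$). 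The main obstacle is extending $\varphi_1$ convexly to $[1,\infty)$ with finite values when $g$ has an unbounded left derivative at $s = 1$ (as occurs already for the Euclidean disk). I would resolve this by replacing the above $\varphi_2$ with a more general convex function, for instance a power $\varphi_2(t) = ((t-t_0)_+/(1-t_0))^p$ with exponent $p \ge 1$ tuned to the local behavior of $g$ near $s=1$, so that the induced $\varphi_1(s) = 1 - \varphi_2(g(s))$ has finite left derivative at $s=1$ and admits a linear extension; one then reverifies convexity, normalization, and that the level set matches the curve.

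For the third claim, the additional symmetry of $K$ in the line $x_1 = x_2$ translates into the involution identity $g \circ g = \mathrm{id}$ on $[0,1]$, which forces $s_1 = t_0 = 0$ and yields a unique fixed point $c \in (0,1)$ with $g(c) = c$. I would define
\[
\varphi_0(s) = \begin{cases} \dfrac{1}{2} + \dfrac{c - g(s)}{2(1-c)}, & s \in [0,c], \\ \dfrac{1}{2} + \dfrac{s - c}{2(1-c)}, & s \ge c, \end{cases}
\]
so $\varphi_0$ is linear on $[c,\infty)$ and continuous at $c$ with value $1/2$. The identity $\varphi_0(s) + \varphi_0(g(s)) = 1$ on $[0,1]$ follows by direct substitution for $s \in [0,c]$ (using $g(s) \in [c,1]$ there), and extends to $s \in [c,1]$ via the involution. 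Convexity on $[0,c]$ comes from concavity of $g$, and on $[c,\infty)$ it is linear; moreover $\varphi_0(0) = 0$ and $\varphi_0(1) = 1$. The delicate step is convexity at the junction $s = c$: the left derivative equals $-g'_l(c)/(2(1-c))$ and the right derivative equals $1/(2(1-c))$, so the required inequality reduces to $g'_l(c) \ge -1$. Here is the key use of symmetry: from $g \circ g = \mathrm{id}$ I would compute $g'_l(c) \cdot g'_r(c) = 1$ (by comparing the appropriate one-sided difference quotients at $c$), and combined with concavity (which gives $g'_l(c) \ge g'_r(c)$, both $\le 0$) this forces $g'_l(c) \in [-1, 0)$. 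Finally, strict monotonicity of $\varphi_0$ implies that for each $x_1 \in [0,1]$ the equation $\varphi_0(x_1) + \varphi_0(x_2) = 1$ has a unique solution $x_2 \in [0,1]$, which by construction equals $g(x_1)$, so the level set coincides with the curve.
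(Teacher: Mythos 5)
Your setup (the graph function $g$, and the decomposition of $\partial K\cap[0,1]^2$ into a horizontal segment, a strictly decreasing arc, and a vertical segment) matches the paper's, but there is a genuine gap in the second, general claim, at exactly the point you flag as ``the main obstacle''. The proposed repair --- take $\varphi_2(t)=((t-t_0)_+/(1-t_0))^p$ with $p$ tuned to the behaviour of $g$ near $s=1$, and set $\varphi_1=1-\varphi_2\circ g$ --- does not work. If $g$ is affine and nonconstant on some subinterval, then there $(1-u^p)''=-p(p-1)u^{p-2}(u')^2<0$ (with $u$ the normalized $g$), so $\varphi_1$ is strictly concave on that subinterval for every $p>1$; on the other hand, taming an infinite left derivative of $g$ at $s=1$ already forces $p\ge 2$ when $g(s)\sim C\sqrt{1-s}$. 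Both features occur simultaneously for, e.g., the $1$-unconditional body whose first-quadrant boundary is the segment of slope $-1/2$ from $(0,1)$ to $(1/2,3/4)$ followed by the concave arc $x_2=\sqrt{2}\sqrt{1-x_1}-\tfrac12(1-x_1)$ down to $(1,0)$; no exponent $p$ works for it ($p=1$ leaves $\varphi_1$ with infinite left derivative at $1$, hence no finite convex extension to $[1,\infty)$). The fallback ``a more general convex $\varphi_2$'' merely restates the problem, since you would still have to prove that some convex increasing $\varphi_2$ makes $1-\varphi_2\circ g$ convex with finite derivative at $1$ --- which is essentially the theorem. The paper's resolution is a three-case analysis: if $g'_r(s_1)<0$, put the curved part into $\varphi_2$ via $g^{-1}$ and take $\varphi_1$ piecewise linear (this handles the example above); if $g'_l(1)>-\infty$, do the opposite (your ansatz); and in the remaining case $g'_r(s_1)=0$, $g'_l(1)=-\infty$, split the arc at the unique point $a$ where the slope of $g$ crosses $-1$, building $\varphi_1$ from $g$ on $[0,a]$ and affinely beyond, and $\varphi_2$ from $g^{-1}$ on $[0,g(a)]$ and affinely beyond, with normalizing constant $2-a-g(a)$. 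That splitting idea is the essential ingredient missing from your proposal.

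Your symmetric case is correct and in fact coincides with the paper's construction (your $\varphi_0$ equals the paper's case-three $\varphi_1$ with $a=b=c$), and your derivation of $g'_l(c)\ge-1$ from the involution identity together with concavity is the same argument. One slip: symmetry with respect to $x_1=x_2$ does \emph{not} force $s_1=t_0=0$ --- consider $\conv\{\pm e_1,\pm e_2,(\pm\tau,\pm1),(\pm1,\pm\tau)\}$ with $\tau>0$; the identity $g\circ g=\mathrm{id}$ holds only on $[s_1,1]$. This is harmless here, since $\varphi_0\equiv 0$ on $[0,s_1]$ and the verification still goes through, but the final uniqueness argument should then be phrased to accommodate the plateau of $\varphi_0$ on $[0,s_1]$ rather than invoking strict monotonicity on all of $[0,1]$.
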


\begin{proof}
The case when $K=[-1,1]^2$ is clear, so we may assume that $K\neq [-1,1]^2$.  Then there are unique maximal $\tau_1,\tau_2\in [0,1)$ such that $({\tau_1},1)\in \partial K$ and $(1,{\tau_2})\in \partial K$.  (This notation will turn out to be consistent with (\ref{tau}).) There is a unique concave function $f:[0,1]\to[0,1]$ such that
$$
K\cap [0,1]^2=\{(t,\lambda f(t)):\lambda,t\in [0,1]\}.
$$
In particular, $f(t)=1$ for $t\in [0,{\tau_1}]$, $f(1)={\tau_2}$, and $f$ is strictly decreasing on $[{\tau_1},1]$, so $f:[{\tau_1},1]\to [{\tau_2},1]$ is a bijection with $f({\tau_1})=1$ and $f(1)={\tau_2}$. We extend $f^{-1}$ to a map $f^{-1}:[0,1]\to[0,1]$ by defining $f^{-1}(t)=1$ for $t\in [0,{\tau_2}]$.

We distinguish three cases. Firstly, if $f'_r({\tau_1})<0$, then $(f^{-1})'_l(1)>-\infty$ and we define
$$
\varphi_1(s)=\frac{\max\{s-{\tau_1},0\}}{1-{\tau_1}},$$
for all $s\ge 0$, and
$$
\varphi_2(s)=\begin{cases}
\frac{1-f^{-1}(s)}{1-{\tau_1}},& {\text{if $0\le s\le 1$,}}\\[1ex]
\frac{-(f^{-1})'_l(1)}{1-{\tau_1}}(s-1)+1,& {\text{if}}~~s\ge 1.
\end{cases}
$$
Then $\varphi_1(0)=\varphi_2(0)=0$ and $\varphi_1(1)=\varphi_2(1)=1$. Since $f^{-1}$ is concave, it is easy to check that $\varphi_2$ is convex. If $s\in [0,{\tau_1}]$, then
$\varphi_1(s)+\varphi_2(f(s))=0+\varphi_2(1)=1$. If $s\in ({\tau_1},1)$, then $f(s)\in({\tau_2},1)$, so $f^{-1}(f(s))=s$ and
$$
\varphi_1(s)+\varphi_2(f(s))=\frac{s-{\tau_1}}{1-{\tau_1}}+\frac{1-f^{-1}(f(s))}{1-{\tau_1}}
=\frac{s-{\tau_1}+1-s}{1-{\tau_1}}=1.
$$
If $s=1$, then $f(1)={\tau_2}$, so $\varphi_2({\tau_2})=0$ and hence again
$\varphi_1(1)+\varphi_2(f(1))=1+0=1$.  This settles the first case.

Secondly, if $f'_l(1)>-\infty$, then we define
\begin{align*}
\varphi_1(s)&=\begin{cases} \frac{1-f(s)}{1-\tau_2},&{\text{if $0\le s\le 1$,}}\\[1ex]
\frac{-f'_l(1)}{1-\tau_2}(s-1)+1,&{\text{if}}~~s\ge 1,
\end{cases}\\
\end{align*}
and
$$\varphi_2(s)=\frac{\max\{s-\tau_2,0\}}{1-\tau_2},$$
for all $s\ge 0$.  This case is completely symmetric to the first one and can therefore be settled by the same argument.

Thirdly, we assume that $f_r'({\tau_1})=0$ and $f_l'(1)=-\infty$. Then there is a unique $a\in ({\tau_1},1)$ such that $f_l'(a)\ge -1$ and $f_r'(a)\le -1$. Let $b=f(a)\in ({\tau_2},1)$, so that $f^{-1}(b)=a$, and define
\begin{align*}
\varphi_1(s)&=\begin{cases} \frac{1-f(s)}{2-a-b},&{\text{if $0\le s\le a$,}}\\[1ex]
\frac{s+1-a-b}{2-a-b},&{\text{if}}~~s\ge a,\\[1ex]
\end{cases}\\
\varphi_2(s)&=\begin{cases} \frac{1-f^{-1}(s)}{2-a-b},&{\text{if $0\le s\le b$,}}\\[1ex]
\frac{s+1-a-b}{2-a-b},&{\text{if}}~~s\ge b.\\[1ex]
\end{cases}
\end{align*}
It is easy to check that $\varphi_1$ and $\varphi_2$ are well defined, $\varphi_1(0)=\varphi_2(0)=0$, and $\varphi_1(1)=\varphi_2(1)=1$. Moreover, $\varphi_1$ is convex, since it is convex on $[0,a]$ and on $[a,\infty)$ and since
$f_l'(a)\ge -1$  implies that
$$
(\varphi_1)'_l(a)=-\frac{f_l'(a)}{2-a-b}\le \frac{1}{2-a-b}=(\varphi_1)'_r(a).
$$
Furthermore, $f_r'(a)\le -1$ implies that
$$
(f^{-1})'_l(b)=\frac{1}{f_r'(a)}\ge -1,
$$
so a similar argument shows that $\varphi_2$ is convex as well.

If $s\in [0,{\tau_1}]$, then $f(s)=1$ and $\varphi_1(s)+\varphi_2(f(s))=0+\varphi_2(1)=1$.
If $s\in [{\tau_1},a]$, then $f(s)\in [b,1]$ and
$$
\varphi_1(s)+\varphi_2(f(s))=\frac{1-f(s)}{2-a-b}+\frac{f(s)+1-a-b}{2-a-b}=1.
$$
If $s\in [a,1)$, then $f(s)\in ({\tau_2},b]$ and
$$
\varphi_1(s)+\varphi_2(f(s))=\frac{s+1-a-b}{2-a-b}+\frac{1-f^{-1}(f(s))}{2-a-b}
=\frac{s+1-a-b+1-s}{2-a-b}=1,
$$
since $f^{-1}(f(s))=s$ for $s\in [a,1)$. Finally, if $s=1$, then $f(1)={\tau_2}$ and $\varphi_2({\tau_2})=0$, and thus again $\varphi_1(1)+\varphi_2(f(1))=1$.
This concludes the proof in the general situation.

If $K$ is symmetric with respect to $x_1=x_2$, then $f$ is its own inverse, $a=b$, and ${\tau_1}={\tau_2}$. Moreover, only the third case in the above argument has to be considered, since $f_l'(a)\ge -1$ and $f_r'(a)\le -1$ at the unique $a\in (0,1)$ such that $f(a)=a$. Thus we get $\varphi_1=\varphi_2$.
\end{proof}

Note that with $f$ as in the previous proof, we have $x_2=f(x_1)$, $0\le x_1\le 1$, and can therefore represent $\partial K\cap [0,1]^2$ by the equation $g_1(x_1)+g_2(x_2)=1$, where $g_1(x_1)=1-f(x_1)$, $0\le x_1\le 1$, and $g_2(x_2)=x_2$ are convex with $g_1(0)=g_2(0)=0$.  However, in general $g_1$ cannot be extended to a function in $\Phi$ since its derivative at $x_1=1$ may be infinite.

If $K=[-1,1]^2$, the representation (\ref{Jphi2}) fails when $(x_1,x_2)=(1,1)$.

The functions $\varphi_j$, $j=1,2$, in Theorem~\ref{inverseTh4.4} are not unique, in general. For example, if $K$ is the unit disk, then the proof of Theorem~\ref{inverseTh4.4} with $f(t)=\sqrt{1-t^2}$, $0\le t\le 1$, provides the representation \eqref{Jphi2} with $\varphi_1=\varphi_2=\varphi_0$ given by
$$
\varphi_0(s)=\begin{cases}
\left(1 - \sqrt{1 -s^2}\right)/(2 - \sqrt{2}),& {\text{ if $0\le s\le 1/\sqrt{2}$,}}\\
(s + 1 - \sqrt{2})/(2 - \sqrt{2}),& {\text{ if $s>1/\sqrt{2}$}},
\end{cases}
$$
but another is given by $\varphi_0(s) = s^2$, $0\le s\le 1$.

The following example shows that Theorem~\ref{inverseTh4.4} does not generally hold for $n>2$.

\begin{ex}\label{ex4.4W}
{\em Let $D$ be the $(n-1)$-dimensional unit ball in the coordinate plane $\{x_n=0\}$ in $\R^n$ and let $K=\conv\{D,\pm e_n\}$.  Then $K$ is a 1-unconditional double cone containing $e_1,\dots,e_n$ in its boundary.  Suppose that there are $\varphi_j\in \Phi$, $j=1,\dots,n$, such that $\partial K\cap [0,1]^n$ is given by the equation
\begin{equation}\label{Jphi2ex}
\varphi(x_1,\dots,x_n)=\varphi_1(x_1)+\cdots+\varphi_n(x_n)=1,
\end{equation}
for $x_1,\dots,x_n\ge 0$.  Let $i\in \{1,\dots,n-1\}$ and let $S$ be the subspace spanned by $e_i$ and $e_n$.  Since $\varphi_j(0)=0$, $j=1,\dots,n$, it follows from (\ref{Jphi2ex}) that the set $\partial K\cap [0,1]^n\cap S$ is given by the equation
\begin{equation}\label{Jphi2ex2}
\varphi_i(x_i)+\varphi_n(x_n)=1,
\end{equation}
for $x_i,x_n\ge 0$.  Now
$$\partial K\cap [0,1]^n\cap S=[e_i,e_n]=\{(1-t)e_i+te_n: 0\le t\le 1\}.$$
We have $\varphi_i(1)=\varphi_n(1)=1$, so if $\varphi_i(1/2)<1/2$, then (\ref{Jphi2ex2}) implies $\varphi_n(1/2)>1/2=\varphi_n(1)/2$, contradicting the convexity of $\varphi_n$.  Since $\varphi_i$ is convex, this yields $\varphi_i(1/2)=1/2$ and hence $\varphi_i(t)=t$ for $0\le t\le 1$.  Similarly, we obtain $\varphi_n(t)=t$ for $0\le t\le 1$.  Now (\ref{Jphi2ex}) becomes
$x_1+\cdots+x_n=1$, but when $x_n=0$ this contradicts the fact that $\partial K\cap [0,1]^n\cap e_n^{\perp}=S^{n-1}\cap [0,1]^n\cap e_n^{\perp}$.}
\end{ex}

In the previous example, the convexity of the functions $\varphi_j$, $j=1,\dots,n$, is used in an essential way.  Indeed, when $n=3$, the choice $\varphi_1(x_1)=x_1^2$, $\varphi_2(x_2)=x_2^2$, and $\varphi_3(x_3)=2x_3-x_3^2$, $x_1, x_2, x_3\in [0,1]$, in (\ref{Jphi2ex}), for example, describes the boundary of $K\cap [0,1]^3$, but then $\varphi_3$ is not convex. Example~\ref{ex4.4W} may also be viewed in the context of Kolmogorov's superposition theorem (see, for example, \cite[Chapter~11]{Lor66}), which arose from Hilbert's thirteenth problem.

\begin{cor}\label{corinverseTh4.4}
Let $M$ be a $1$-unconditional convex body in $\R^m$ that contains $e_1,\ldots,e_m$ in its boundary. Then there is a
$\varphi\in\Phi_m$ such that $\oplus_M=+_{\varphi}$ as operations $\left({\mathcal{K}}^n_s\right)^2\to {\mathcal{K}}^n_s$ and $\oplus_{M\cap [0,\infty)^2}=+_{\varphi}$ as operations $\left({\mathcal{K}}^n_o\right)^2\rightarrow {\mathcal{K}}^n_o$. Moreover,

\smallskip

\noindent {\rm{(i)}} if $m\ge 2$, the function $\varphi$ can be chosen to be homogeneous of degree 1;

\noindent {\rm{(ii)}}  if $m=2$, $\varphi$ can be defined by \eqref{phimax} if $M=\conv\{\pm e_1,\pm e_2\}$ and by \eqref{Jphi2}, for some $\varphi_1,\varphi_2\in \Phi$, otherwise.  If $M$ is also symmetric with respect to $x_1=x_2$, then the latter holds with $\varphi_1=\varphi_2=\varphi_0$.
\end{cor}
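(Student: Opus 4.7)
The plan is to reduce everything to Theorem~\ref{Orthm2} by finding, for a given $M$, a function $\varphi\in\Phi_m$ whose associated body $J_\varphi$ is exactly $M^\circ$. Concretely, I would first verify that polarity preserves the structural hypotheses: if $M$ is a $1$-unconditional convex body in $\R^m$ with $e_j\in\partial M$ for $j=1,\ldots,m$, then $M^\circ$ is also $1$-unconditional (polars commute with reflection in coordinate hyperplanes), and because $e_j\in\partial M$ together with $1$-unconditionality forces $h_M(e_j)=1$, one gets $\rho_{M^\circ}(e_j)=1/h_M(e_j)=1$, i.e.\ $e_j\in\partial M^\circ$. In particular $M^\circ\subset[-1,1]^m$.

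Next I would apply Theorem~\ref{inverseTh4.4general} to $M^\circ$ to produce a homogeneous-of-degree-$1$ function $\varphi\in\Phi_m$ with
$$
\partial M^\circ\cap[0,1]^m=\{x\in[0,1]^m:\varphi(x)=1\}.
$$
Since $\varphi=h_{(M^\circ)^\circ}|_{[0,\infty)^m}=h_{M}|_{[0,\infty)^m}$ (using reflexivity of polarity for convex bodies containing the origin in their interiors), the sublevel set description
$$
J_\varphi\cap[0,\infty)^m=\{x\in[0,1]^m:\varphi(x)\le 1\}=M^\circ\cap[0,\infty)^m
$$
from \eqref{Jphi} combined with $1$-unconditionality yields $J_\varphi=M^\circ$, hence $J_\varphi^\circ=M$. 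Theorem~\ref{Orthm2} then gives the two identifications $\oplus_M=+_\varphi$ on $(\mathcal{K}^n_s)^m$ and $\oplus_{M\cap[0,\infty)^m}=+_\varphi$ on $(\mathcal{K}^n_o)^m$, and part~(i) is immediate because the $\varphi$ provided by Theorem~\ref{inverseTh4.4general} is homogeneous of degree $1$.

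For part~(ii), when $m=2$ I would split on whether $M^\circ=[-1,1]^2$. Note $M^\circ=[-1,1]^2$ if and only if $M=\conv\{\pm e_1,\pm e_2\}$; in that case the boundary of $M^\circ$ in $[0,1]^2$ is described by $\max\{x_1,x_2\}=1$, so one may take $\varphi$ as in \eqref{phimax}. In the contrary case $M^\circ\ne[-1,1]^2$, and Theorem~\ref{inverseTh4.4} applied to $M^\circ$ furnishes $\varphi_1,\varphi_2\in\Phi$ with
$$
\partial M^\circ\cap[0,1]^2=\{(x_1,x_2)\in[0,1]^2:\varphi_1(x_1)+\varphi_2(x_2)=1\},
$$
so $\varphi(x_1,x_2)=\varphi_1(x_1)+\varphi_2(x_2)$ gives $J_\varphi=M^\circ$ and hence $+_\varphi=\oplus_M$ as above. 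If in addition $M$ is symmetric with respect to $x_1=x_2$, the same symmetry is inherited by $M^\circ$, so Theorem~\ref{inverseTh4.4} supplies $\varphi_1=\varphi_2=\varphi_0$.

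The only delicate point, and the step I would be most careful about, is the transfer of the boundary condition: one must argue that $e_j\in\partial M$ genuinely implies $e_j\in\partial M^\circ$ (so that Theorem~\ref{inverseTh4.4general} applies to $M^\circ$), and that $J_\varphi\cap[0,\infty)^m$ as produced by the homogeneous $\varphi$ really equals $M^\circ\cap[0,\infty)^m$ rather than some rescaling of it. Both issues are controlled by the normalization $h_M(e_j)=h_{M^\circ}(e_j)=1$ forced by $1$-unconditionality together with $e_j\in\partial M$, so no additional hypothesis is needed.
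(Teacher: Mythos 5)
Your proposal is correct and follows essentially the same route as the paper: pass to $K=M^\circ$, invoke Theorems~\ref{inverseTh4.4general} and~\ref{inverseTh4.4} to get $\varphi$ with $M^\circ=J_\varphi$, hence $M=J_\varphi^\circ$, and conclude via Theorem~\ref{Orthm2}. The extra care you take in verifying that $e_j\in\partial M$ and $1$-unconditionality force $h_M(e_j)=1$ (so that $M^\circ$ indeed satisfies the hypotheses of those theorems, a point the paper leaves implicit) is sound and welcome.
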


\begin{proof}
The assumptions on $M$ imply that $K=M^\circ$ satisfies the hypotheses of Theorem \ref{inverseTh4.4general} and Theorem~\ref{inverseTh4.4}.  Then the functions $\varphi\in \Phi_m$ provided by these theorems have the property that $\partial M^{\circ}\cap [0,1]^m$ is given by \eqref{Th4.4neweq}, (\ref{phimax}), or (\ref{Jphi2}), respectively.  Therefore $M^\circ =J_{\varphi}$ and hence $M=J_{\varphi}^\circ$, where $\varphi$ in (\ref{Jphi}) takes the special forms in the statement of the corollary.  The result follows from Theorem~\ref{Orthm2}.
\end{proof}

\begin{rem}\label{remOMequiv}
{\em The assumption in Corollary~\ref{corinverseTh4.4} that $M$ contains $e_1,\ldots,e_m$ in its boundary can be removed by working with  $\overline{\Phi}_m$ as defined in Section~\ref{prelim} instead of $\Phi_m$.  In this case, in Corollary~\ref{corinverseTh4.4}(ii), we have that
$\varphi$ can be defined by $\varphi(x_1,x_2)=\max\{ax_1, bx_2\}$ if
$M=\conv\{\pm ae_1,\pm be_2\}$ for some $a,b> 0$, and by \eqref{Jphi2}, for some $\varphi_1,\varphi_2\in \overline{\Phi}_1$, otherwise.
}
\end{rem}

The previous theorems allow us to find necessary and sufficient conditions for the commutativity or associativity of $+_{\varphi}$.

\begin{thm}\label{thmcomm}
The operation $+_{\varphi}:\left({\mathcal{K}}^n_s\right)^2\rightarrow {\mathcal{K}}^n_s$ (or $+_{\varphi}:
\left({\mathcal{K}}^n_o\right)^2\rightarrow {\mathcal{K}}^n_o$) is commutative if and only if it can be defined by \eqref{Orldef}, where $\varphi$ is given by \eqref{phimax} or by \eqref{Jphi2}, where $\varphi_1=\varphi_2$.
\end{thm}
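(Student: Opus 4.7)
The plan is to leverage the equivalence between Orlicz addition and $M$-addition established in Theorem~\ref{Orthm2}, together with the characterization of symmetric $1$-unconditional convex bodies in Theorem~\ref{inverseTh4.4}.

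For sufficiency, I would verify the two forms directly. When $\varphi(x_1,x_2)=\max\{x_1,x_2\}$, the defining equation (\ref{Orldef}) immediately gives $h_{K+_\varphi L}=\max\{h_K,h_L\}$, which is manifestly symmetric in $K$ and $L$. When $\varphi(x_1,x_2)=\varphi_0(x_1)+\varphi_0(x_2)$, the function $\varphi$ itself is symmetric in its arguments, so the left-hand side of (\ref{Orldef}) is unchanged upon swapping $K$ and $L$, and uniqueness of the implicit definition yields $K+_\varphi L=L+_\varphi K$.

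For necessity, assume $+_\varphi$ is commutative. By Theorem~\ref{Orthm2}, formula (\ref{nnnn}) holds for all $K,L\in\K^n_o$ (which covers both the $o$-symmetric and the $o$-containing cases), so commutativity translates into
$$
h_{J_\varphi^\circ}(h_K(x),h_L(x))=h_{J_\varphi^\circ}(h_L(x),h_K(x)),
$$
for all admissible $K,L$ and all $x\in\R^n$. Taking $x=e_1$ and $K,L$ to be appropriate segments on $l_{e_1}$ (that is, $[-ae_1,ae_1]$ and $[-be_1,be_1]$ in the $o$-symmetric case, or $[o,ae_1]$ and $[o,be_1]$ in the $o$-containing case), the pair $(h_K(x),h_L(x))$ realizes any prescribed $(a,b)\in[0,\infty)^2$. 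Hence $h_{J_\varphi^\circ}(a,b)=h_{J_\varphi^\circ}(b,a)$ on $[0,\infty)^2$, and by the $1$-unconditionality of $J_\varphi^\circ$ this extends to all of $\R^2$. Consequently $J_\varphi^\circ$, and therefore $J_\varphi$, is symmetric with respect to the line $x_1=x_2$.

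Now $J_\varphi\subset[-1,1]^2$ is a $1$-unconditional convex body containing $e_1$ and $e_2$ in its boundary and symmetric with respect to $x_1=x_2$, so Theorem~\ref{inverseTh4.4} applies in its symmetric form: either $J_\varphi=[-1,1]^2$, whose positive boundary is $\max\{x_1,x_2\}=1$, or there exists $\varphi_0\in\Phi$ with $\partial J_\varphi\cap[0,1]^2=\{\varphi_0(x_1)+\varphi_0(x_2)=1\}$. Define $\widetilde\varphi\in\Phi_2$ by (\ref{phimax}) in the first case and by $\widetilde\varphi(x_1,x_2)=\varphi_0(x_1)+\varphi_0(x_2)$ in the second. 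In each case $J_{\widetilde\varphi}=J_\varphi$ (the defining sublevel sets inside $[0,1]^2$ coincide because $J_\varphi\cap[0,\infty)^2\subset[0,1]^2$ and $\widetilde\varphi$ is increasing in each variable with $\widetilde\varphi(o)=0$), so by Theorem~\ref{Orthm2} the Orlicz addition $+_{\widetilde\varphi}$ equals $+_\varphi$, giving the required representation. The only delicate point is recognizing that the map $\varphi\mapsto{+_\varphi}$ factors through $J_\varphi$, so any $\widetilde\varphi\in\Phi_2$ with $J_{\widetilde\varphi}=J_\varphi$ may be substituted for $\varphi$ without altering the addition; beyond this, the argument is a direct combination of the previously established results.
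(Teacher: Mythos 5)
Your proposal is correct and follows essentially the same route as the paper: both directions reduce to showing that commutativity forces $J_{\varphi}^{\circ}$ (equivalently $J_{\varphi}$) to be symmetric with respect to the line $x_1=x_2$, and then invoke Theorem~\ref{inverseTh4.4} (the paper does this via Corollary~\ref{corinverseTh4.4}). The only difference is cosmetic: you extract the symmetry of $J_{\varphi}^{\circ}$ from the support-function identity \eqref{nnnn} applied to collinear segments, which is a slightly cleaner derivation than the paper's element-chasing argument with $K=[-e_1,e_1]$ and $L=[-e_2,e_2]$ in the $M$-sum.
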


\begin{proof}
Suppose that  $+_{\varphi}:\left({\mathcal{K}}^n_s\right)^2\rightarrow {\mathcal{K}}^n_s$ (or $+_{\varphi}:
\left({\mathcal{K}}^n_o\right)^2\rightarrow {\mathcal{K}}^n_o$) is commutative.
By Theorem~\ref{Orthm2}, there is a 1-unconditional convex body $M\subset[-1,1]^2$ (namely, $J_{\varphi}^\circ$), containing $e_1$ and $e_2$ in its boundary, such that $+_{\varphi}={\oplus}_M$ (or $+_{\varphi}={\oplus}_{M\cap [0,\infty)^2}$, respectively).  Let $(a,b)\in M$.  If $K=[-e_1, e_1]$ and $L=[-e_2,e_2]$, then since $\oplus_M$ is commutative, we have
$$
ae_1+be_2\in K\oplus_M L= L\oplus_M K.
$$
Therefore there are $s,t\in [-1,1]$ and $(c,d)\in M$ such that
$$ae_1+be_2=cse_2+dte_1$$
and hence $cs=b$ and $dt=a$. If $s,t\neq 0$, it follows that $(b/s,a/t)\in M$ and since $M$ is 1-unconditional, $(b/|s|,a/|t|)\in M$.  Again using the fact that $M$ is 1-unconditional, we conclude that $(b,a)\in M$ because $|s|,|t|\le 1$.  If $s=0$, we must have $b=0$ and the same argument shows that $(0,a)\in M$. The case when $t=0$ is dealt with similarly.

This proves that $M$ is symmetric with respect to $x_1=x_2$.  The desired conclusion now follows from Corollary~\ref{corinverseTh4.4}.

The converse is an immediate consequence of (\ref{Orldef}).
\end{proof}

\begin{thm}\label{thmassoc}
The operation $+_{\varphi}:\left({\mathcal{K}}^n_s\right)^2\rightarrow {\mathcal{K}}^n_s$ (or $+_{\varphi}:
\left({\mathcal{K}}^n_o\right)^2\rightarrow {\mathcal{K}}^n_o$) is associative if and only if $+_{\varphi}=+_p$, that is, $+_{\varphi}$ is $L_p$ addition, for some $1\le p\le\infty$.
\end{thm}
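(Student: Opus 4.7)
My plan for the converse direction is immediate: from (\ref{Lpaddition}), for $1\le p<\infty$ the identity
$$h_{(K+_pL)+_pN}(x)^p=h_K(x)^p+h_L(x)^p+h_N(x)^p$$
is symmetric in $(K,L,N)$, while for $p=\infty$ associativity of $+_\infty$ comes from associativity of the maximum (equivalently, $K+_\infty L=\conv(K\cup L)$). So only the forward direction requires work.

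For the forward direction, assume first that $+_\varphi:(\K^n_s)^2\to \K^n_s$ is associative. By Theorem~\ref{Orthm1}, $+_\varphi$ is also continuous and $GL(n)$ covariant, hence projection covariant by Corollary~\ref{contaff}. The decisive step is to invoke \cite[Theorem~7.9]{GHW}, which classifies all projection covariant, associative binary operations on $\K^n_s$: we must have $+_\varphi=\oplus_M$ where $M$ is one of $\{o\}$, $[-e_1,e_1]$, $[-e_2,e_2]$, or the unit ball of $l^2_p$ for some $1\le p\le\infty$. I would then rule out the three trivial options by invoking the identity property of $+_\varphi$ from Theorem~\ref{Orthm1}: a direct computation from (\ref{Mndef}) gives $\{o\}\oplus_M K=\{o\}$ for each of those three $M$, contradicting the requirement $\{o\}\oplus_M K=K$ for arbitrary $K$. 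Hence $M$ is the $l^2_p$ ball, and the discussion of $L_p$ addition at the end of Section~\ref{Maddition} identifies the corresponding $\oplus_M$ with $+_p$.

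For the version $+_\varphi:(\K^n_o)^2\to \K^n_o$, I would first observe, again via Theorem~\ref{Orthm1}, that $+_\varphi$ restricts to an associative operation on $(\K^n_s)^2$ with values in $\K^n_s$. The preceding paragraph then yields $+_\varphi|_{(\K^n_s)^2}=+_p$ for some $1\le p\le\infty$. By (\ref{OrlComb}), the operation $+_\varphi$ depends on $\varphi$ only through the sublevel set $\{(x_1,x_2)\in[0,\infty)^2:\varphi(x_1,x_2)\le 1\}$, and this sublevel set is already determined by $+_\varphi|_{(\K^n_s)^2}$, since every pair $(s,t)\in[0,\infty)^2$ arises as $(h_K(x),h_L(x))$ for suitable $o$-symmetric segments $K,L$ and direction $x$. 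Consequently $+_\varphi=+_p$ on $(\K^n_o)^2$ as well.

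I do not expect any serious obstacle here: essentially all of the substantive work is concentrated in the application of \cite[Theorem~7.9]{GHW}, after which every remaining step is a short deduction from results already established earlier in this section (the identity property and projection covariance from Theorem~\ref{Orthm1}, the passage between Orlicz and $M$-addition from Theorem~\ref{Orthm2} and Corollary~\ref{corinverseTh4.4}, and the explicit formula (\ref{OrlComb})). If anything, the most delicate bookkeeping is the transfer from the $o$-symmetric to the general $o$-containing setting in the last paragraph, but this is handled cleanly by the sublevel-set observation above.
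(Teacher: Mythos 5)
Your argument is correct and rests on the same pivot as the paper's proof, namely the classification in \cite[Theorem~7.9]{GHW}; the paper simply runs the argument of that proof directly on $M=J_{\varphi}^{\circ}$, which, being a convex body, automatically excludes the three degenerate options and, via (\ref{nnnn}), settles the $\K^n_o$ case without your separate transfer step. One small slip: for $M=[-e_2,e_2]$ and $o$-symmetric $K$ one has $\{o\}\oplus_M K=\bigcup_{|a|\le 1}aK=K$, not $\{o\}$; the identity property still fails there because $K\oplus_M\{o\}=\{o\}$, so your conclusion stands after swapping the slot.
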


\begin{proof}
Suppose that $+_{\varphi}:\left({\mathcal{K}}^n_s\right)^2\rightarrow {\mathcal{K}}^n_s$ (or $+_{\varphi}:
\left({\mathcal{K}}^n_o\right)^2\rightarrow {\mathcal{K}}^n_o$). Then (\ref{nnnn}) holds.  The associativity of $+_{\varphi}$ allows the argument in the proof of \cite[Theorem~7.9]{GHW} to be applied, with $M$ there replaced by $J_{\varphi}^{\circ}$, and this shows that the convex body $J_{\varphi}^{\circ}$ must be the unit ball in $l_p^2$ for some $1\le p\le \infty$.  Hence  $+_{\varphi}=+_p$, for some $1\le p\le \infty$.
\end{proof}

\section{Extensions to arbitrary sets}\label{Extension}
In Section \ref{Orlicz}, an Orlicz addition $+_{\varphi}$, for $\varphi\in\Phi_m$ , is defined between convex sets containing the origin.
Now we extend $+_{\varphi}$ to an operation between general compact sets.

Recall the definition \eqref{Jphi} of the 1-unconditional convex body $J_{\varphi}\subset[-1,1]^m$ for a given function $\varphi\in\Phi_m$.
By Theorem \ref{Orthm2}, we know that $+_\varphi(K_1,\ldots,K_m)=\oplus_{J_\varphi^\circ \cap[0,\infty)^m}(K_1,\ldots,K_m)$,
for all $K_1,\ldots,K_m\in\K^n_o$. Therefore we can define
\begin{equation}\label{ExtOrldeff}
+_\varphi(K_1,\ldots,K_m)=\oplus_{J_\varphi^\circ \cap[0,\infty)^m}(K_1,\ldots,K_m),
\end{equation}
for arbitrary sets $K_1,\dots,K_m$ in $\R^n$, where the right-hand side is given by (\ref{Mndef}) with $M=J_\varphi^\circ \cap[0,\infty)^m$.

Orlicz addition can be extended to an operation $+_{\varphi}:\left({{\K}}^n\right)^m\to {{\K}}^n_o$, equivalently, by setting
\begin{equation}\label{ExtOrldef}
+_{\varphi}(K_1,\dots,K_m)=+_{\varphi}\left(\conv\{K_1,o\},\dots,
\conv\{K_m,o\}\right),
\end{equation}
for $K_j\in {{\K}}^n$, $j=1,\dots,m$, where the right-hand side is defined by (\ref{Orldef}).  Indeed, in view of the fact that $J_{\varphi}^{\circ}$ is 1-unconditional, we have
\begin{eqnarray*}
h_{J_\varphi^\circ \cap[0,\infty)^m}\left(h_{K_1}(x),\dots,h_{K_m}(x)\right)
&=&
h_{J_\varphi^\circ \cap[0,\infty)^m}\left(
\max\{h_{K_1}(x),0\},\dots,\max\{h_{K_m}(x),0\}\right)\\
&=&
h_{J_\varphi^\circ \cap[0,\infty)^m}\left(
h_{\conv\{K_1,o\}}(x),\dots,h_{\conv\{K_m,o\}}(x)\right),
\end{eqnarray*}
for all $x\in \R^n$, so (\ref{ExtOrldef}) agrees with (\ref{ExtOrldeff}).

To illustrate, consider the $L_p$ sum, $1\le p\le \infty$, of two sets.  In this case we have
$$J_\varphi^\circ \cap[0,\infty)^2=\{(x_1,x_2)\in [0,1]^2: x_1^{p'}+x_2^{p'}\le 1\},$$
where $1/p+1/p'=1$.  When $K,L\in {\K}^n$, this leads to the extension of $L_p$ addition given in \cite[Example~6.7]{GHW}. Denoting this extension also by $+_p$, we then have $K+_pL=\conv\{K,o\}+_p\conv\{L,o\}$, and, in particular, $K+_{\infty}L=\conv\{K\cup L,o\}$, which agrees with (\ref{ExtOrldef}).

The operation $+_{\varphi}:\left({{\K}}^n\right)^m\to {{\K}}^n_o$ defined by (\ref{ExtOrldeff}) or (\ref{ExtOrldef}) is monotonic, continuous,  $GL(n)$ covariant, and projection covariant.  This follows from a straightforward modification of Theorem~\ref{Orthm1}, using the fact that the map taking $K\in {\K}^n$ to $\conv\{K,o\}$ is monotonic, continuous in the Hausdorff metric, and satisfies $A\left(\conv\{K,o\}\right)=\conv\{AK,o\}$ for each linear map $A:\R^n\to \R^n$.  However, the extended operation no longer has the identity property, in general.

A different extension is possible when $m=2$. If $J_{\varphi}^{\circ}=[-1,1]^2$, then by Theorem~\ref{Orthm2}, $+_{\varphi}:\left({\mathcal{K}}^n_o\right)^2\rightarrow {\mathcal{K}}^n_o$ is Minkowski addition and already makes sense for arbitrary sets $K$ and $L$ via the formula
\begin{equation}\label{Mextens}
K+_{\varphi}L=K+L=\{x+y:x\in K, y\in L\}.
\end{equation}

Otherwise, we have $J_{\varphi}^{\circ}\neq [-1,1]^2$
and can apply Theorem~\ref{inverseTh4.4} with $K=J_{\varphi}^{\circ}$ to conclude that
\begin{equation}\label{psipn}
J_{\varphi}^{\circ}\cap [0,\infty)^2=\{(x_1,x_2)\in [0,1]^2:\psi_1(x_1)+\psi_2(x_2)
\le 1\},
\end{equation}
for some $\psi_1, \psi_2\in\Phi$.  If $\tau_j=\tau(\psi_j)<1$ is defined by (\ref{tau}) with $\varphi$ replaced by $\psi_j$, we denote by $\widehat{\psi_j}$ the restriction of $\psi_j$ to $[\tau_j,1]$, for $j=1,2$.  Then $\widehat{\psi_j}:[\tau_j,1]\to [0,1]$, $j=1,2$, is a bijection and for arbitrary subsets $K$ and $L$ of $\R^n$, we define
\begin{equation}\label{defarb}
K+_{\varphi}L=\{\widehat{\psi_1}^{-1}(1-t)x+
\widehat{\psi_2}^{-1}(t)y:x\in K, y\in L, 0\le t\le 1\}.
\end{equation}
This is the Orlicz analog of the $L_p$ case (\ref{LYZLp}).  We remarked after (\ref{LYZLp}) that the right-hand side is not generally convex when $K, L\in {\K}^n$ and hence this extension of Orlicz addition is different from the one above.  The following result generalizes that in \cite[Lemma~1.1]{LYZ}, which is the corresponding one for $L_p$ addition.

\begin{thm}\label{thmarb}
If $K, L\in{\mathcal K}^n_{o}$, the definition of $+_{\varphi}$ via \eqref{defarb} agrees with the one via \eqref{Orldef}.
\end{thm}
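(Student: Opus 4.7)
The plan is to leverage Theorem~\ref{Orthm2} together with the representation \eqref{psipn} to recast the definition via \eqref{Orldef} as an $M$-sum, then show that this $M$-sum coincides with the set on the right-hand side of \eqref{defarb}.

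By Theorem~\ref{Orthm2}, for $K,L\in\K_o^n$ one has $K+_\varphi L = \oplus_M(K,L)$ with $M := J_\varphi^\circ \cap [0,\infty)^2$, and by \eqref{psipn} this set $M$ equals $\{(a,b)\in [0,1]^2 : \psi_1(a)+\psi_2(b)\le 1\}$. Unwinding the definition \eqref{Mndef} of the $M$-sum yields
\[
K+_\varphi L = \{ax+by : x\in K,\, y\in L,\, (a,b)\in M\}.
\]
Let $E$ denote the right-hand side of \eqref{defarb}. The inclusion $E\subset K+_\varphi L$ is immediate, since for every $t\in[0,1]$ the pair $(a',b'):=(\widehat{\psi_1}^{-1}(1-t),\widehat{\psi_2}^{-1}(t))$ satisfies $\psi_1(a')+\psi_2(b')=(1-t)+t=1$, whence $(a',b')\in M$.

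For the reverse inclusion $K+_\varphi L\subset E$, the crucial step is to show that every $(a,b)\in M$ is dominated componentwise by some point of the parameterized curve; that is, one finds $t\in[0,1]$ with $a':=\widehat{\psi_1}^{-1}(1-t)\ge a$ and $b':=\widehat{\psi_2}^{-1}(t)\ge b$. Three cases suffice. If $a\le \tau_1$, take $t=1$, so $(a',b')=(\tau_1,1)$ and $a'\ge a$, $b'=1\ge b$. If $b\le \tau_2$, take $t=0$ by symmetry. Otherwise $a\in(\tau_1,1]$ and $b\in(\tau_2,1]$, and the choice $t:=1-\psi_1(a)$ (which lies in $[0,1]$ because $\psi_1(a)\in [0,1-\psi_2(b)]\subset[0,1]$) yields $a'=a$ by definition of $\widehat{\psi_1}^{-1}$, while $\psi_2(b')=1-\psi_1(a)\ge\psi_2(b)$ forces $b'\ge b$, since $\widehat{\psi_2}$ is strictly increasing on $[\tau_2,1]$.

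With such $(a',b')$ in hand, one rewrites $ax+by = a'x'+b'y'$ with $x'\in K$ and $y'\in L$ using the fact that $K$ and $L$, being convex and containing $o$, are star-shaped at $o$: if $a'>0$, set $x':=(a/a')\,x\in K$, which is the convex combination $(1-a/a')\,o+(a/a')\,x$ of two points of $K$; if $a'=0$, then necessarily $a=0$ and any $x'\in K$ serves. The argument for $b,b'$ is identical. This exhibits $ax+by\in E$. The only subtle point is the case analysis for selecting $t$, which must accommodate the corners of $M$ where $\tau_1$ or $\tau_2$ may vanish; the rescaling step depends only on convexity and $o\in K\cap L$ and is routine.
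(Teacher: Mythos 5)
Your proof is correct and follows essentially the same route as the paper's: both identify $K+_{\varphi}L$ with $K\oplus_M L$ for $M=J_{\varphi}^{\circ}\cap[0,\infty)^2$ via Theorem~\ref{Orthm2} and \eqref{psipn}, prove the easy inclusion identically, and absorb an arbitrary $(a,b)\in M$ into the boundary curve by shrinking points of $K$ and $L$ toward the origin. The only difference is cosmetic: the paper rescales $(a,b)$ radially to the curve by a common factor $\alpha$ and replaces $x,y$ by $\alpha x,\alpha y$, whereas you pick a componentwise-dominating curve point $(a',b')$ and rescale $x$ and $y$ by the separate factors $a/a'$ and $b/b'$.
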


\begin{proof}
Let $K, L\in{\mathcal K}^n_{o}$. We claim that the set $K+_{\varphi}L$ defined by \eqref{defarb} equals $K\oplus_M L$, where $M=J_{\varphi}^{\circ}\cap [0,\infty)^2$ is given by (\ref{psipn}).  This will suffice to prove the result, since we know from Theorem~\ref{Orthm2} that $K\oplus_M L$ equals the set $K+_{\varphi}L$ defined by \eqref{Orldef}.

To prove the claim, let $E$ be $K+_{\varphi}L$ as defined by \eqref{defarb}.  We have $K\oplus_M L\in{\mathcal K}^n_{o}$ and
$$E=\{\widehat{\psi_1}^{-1}(1-t)x+\widehat{\psi_2}^{-1}(t)y:x\in K, y\in L, 0\le t\le 1\}\subset K\oplus_M L,$$
since it follows from (\ref{psipn}) that $\left(\widehat{\psi_1}^{-1}(1-t),\widehat{\psi_2}^{-1}(t)\right)\in M$.  Suppose that $z=ax+by\in K\oplus_ML$, where $x\in K$, $y\in L$, and $(a,b)\in M$, so that $\psi_1(a)+\psi_2(b)\le 1$. If $a=b=0$, then $z=o\in E$. If $a+b>0$, then there is a unique $\alpha>0$ such that
\begin{equation}\label{alphi}
\psi_1\left(\frac{a}{\alpha}\right)+
\psi_2\left(\frac{b}{\alpha}\right)=1.
\end{equation}
(The uniqueness is a consequence of $\psi_1,\psi_2\in \Phi$; it can easily be proved directly but was already used more generally in the definition (\ref{Orldef}).) Let
\begin{equation}\label{tt}
t=\psi_2\left(\frac{b}{\alpha}\right).
\end{equation}
From (\ref{alphi}) it follows that $0\le t\le 1$.  By (\ref{alphi}) and the fact that $\psi_1(a)+\psi_2(b)\le 1$, we must also have $0<\alpha\le 1$.  Then $\alpha x\in K$ and $\alpha y\in L$.

If $a/\alpha\ge \tau_1$ and $b/\alpha\ge\tau_2$, then from (\ref{alphi}) and (\ref{tt}) we get
$b/\alpha=\widehat{\psi_2}^{-1}(t)$ and $a/\alpha=\widehat{\psi_1}^{-1}(1-t)$, and hence
$$z=ax+by=\widehat{\psi}^{-1}(1-t)\alpha x+\widehat{\psi}^{-1}(t)\alpha y\in E.$$
If $b/\alpha< \tau_2$, then $t=\psi_2(b/\alpha)=0$. Then $\psi_1(a/\alpha)=1$ and thus $a=\alpha$.  Using $\widehat{\psi_1}^{-1}(1-0)=1$, $\widehat{\psi_2}^{-1}(0)=\tau_2$, and $b/\tau_2<\alpha\le 1$, we obtain
$$z=ax+by=\widehat{\psi_1}^{-1}(1-0)\alpha x+\widehat{\psi_2}^{-1}(0)\frac{b}{\tau_2} y\in E.$$
A similar argument applies if $a/\alpha< \tau_1$. This proves the claim.
\end{proof}

\section{Brunn-Minkowski-type inequalities}\label{Minequality}

In this section we establish Brunn-Minkowski-type inequalities for $M$-addition and Orlicz addition of compact sets.

\begin{lem}\label{BrunnMinkowski} If $M\in {\mathcal{C}}^m$ and $K_1,\ldots,K_m\in {\mathcal{C}}^n$, then
\begin{equation}\label{BMI}
{V}(\oplus_M (K_1,\ldots,K_m))^{1/n} \ge \sum_{j=1}^m|a_j| {V}(K_j)^{1/n} ,
\end{equation}
for all $(a_1,\ldots,a_m)\in M$.  If $V(K_j)>0$ for $j=1,\ldots,m$ and equality holds in \eqref{BMI} for some $(a_1,\ldots,a_m)\in M$ with $a_j\neq 0$, $j=1,\ldots,m$, then $K_1,\ldots,K_m$ are homothetic convex bodies.
\end{lem}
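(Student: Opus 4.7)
The plan is to reduce the inequality directly to the classical Brunn–Minkowski inequality for compact sets by means of the equivalent description (\ref{Mnaltdef}) of $M$-addition.

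First, I would fix an arbitrary $(a_1,\ldots,a_m)\in M$. From (\ref{Mnaltdef}) one has the set-theoretic inclusion
\begin{equation*}
a_1K_1+a_2K_2+\cdots+a_mK_m\subset \oplus_M(K_1,K_2,\ldots,K_m),
\end{equation*}
since the left-hand side is one of the sets whose union constitutes $\oplus_M(K_1,\ldots,K_m)$. By monotonicity of Lebesgue measure, it follows that
\begin{equation*}
V\!\left(\oplus_M(K_1,\ldots,K_m)\right)^{1/n}\ge V(a_1K_1+\cdots+a_mK_m)^{1/n}.
\end{equation*}
Now I would apply the classical Brunn–Minkowski inequality for compact sets, iterated $m-1$ times, together with the homogeneity $V(a_jK_j)=|a_j|^nV(K_j)$, to conclude
\begin{equation*}
V(a_1K_1+\cdots+a_mK_m)^{1/n}\ge \sum_{j=1}^m V(a_jK_j)^{1/n}=\sum_{j=1}^m|a_j|\,V(K_j)^{1/n}.
\end{equation*}
This already proves (\ref{BMI}). (The inequality is trivial when some $a_j=0$, since the corresponding term contributes nothing on the right.)

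For the equality assertion, I would assume $V(K_j)>0$ for every $j$ and that equality holds in (\ref{BMI}) for some $(a_1,\ldots,a_m)\in M$ with every $a_j\neq 0$. The chain of inequalities just derived then collapses to equalities throughout. In particular, the iterated Brunn–Minkowski inequality for the compact sets $a_1K_1,\ldots,a_mK_m$ is tight. The equality case of the Brunn–Minkowski inequality for compact sets (see \cite[Section~4]{Gar02}) forces all but a null set of each $a_jK_j$ to coincide with a convex body, and these convex hulls to be pairwise homothetic. Since each $K_j$ is itself compact and $a_j\neq 0$, this upgrades to the statement that $K_1,\ldots,K_m$ are homothetic convex bodies, as required.

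The main obstacle is the equality case: the Brunn–Minkowski equality conditions are delicate for general compact (non-convex) sets, and one must carefully invoke the Hadwiger–Ohmann–type characterization rather than the easier equality condition available only for convex bodies. Apart from citing that result, the rest of the argument is essentially set-theoretic plus one application of classical Brunn–Minkowski.
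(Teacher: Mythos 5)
Your proof is correct and follows essentially the same route as the paper: the inclusion $a_1K_1+\cdots+a_mK_m\subset\oplus_M(K_1,\ldots,K_m)$ from \eqref{Mnaltdef}, monotonicity of volume, the iterated classical Brunn--Minkowski inequality for compact sets, and the Hadwiger--Ohmann equality condition (upgraded to genuine convexity via compactness). The paper merely spells out the iteration in the equality case as an explicit induction on $m$, grouping the first $p$ summands against the last one, which is what your ``pairwise homothetic'' conclusion amounts to.
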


\begin{proof}
Let $(a_1,\ldots,a_m)\in M$.  From \eqref{Mnaltdef}, we obtain
$$
{V}(\oplus_M (K_1,\ldots,K_m)) \ge {V}\left(\sum_{j=1}^m a_j K_j\right).
$$
The Brunn-Minkowski inequality for compact sets (see, for example, \cite{Gar02}) yields
\begin{equation}\label{BM}
{V}\left(\sum_{j=1}^m a_j K_j\right)^{1/n}\ge \sum_{j=1}^m|a_j| {V}(K_j)^{1/n},
\end{equation}
and (\ref{BMI}) follows.

Suppose that ${V}(K_j)>0$ for $j=1,\ldots,m$ and equality holds in (\ref{BMI}) for some $(a_1,\ldots,a_m)\in M$ with $a_j\neq 0$, $j=1,\ldots,m$.  Then equality holds in (\ref{BM}).  When $m=2$, the equality condition for the Brunn-Minkowski inequality for compact sets (see \cite[p.~363]{Gar02}) implies that $K_1$ and $K_2$ are homothetic convex bodies.  Suppose that when $m=p\ge 2$, equality in (\ref{BM}) implies that $K_1,\dots,K_p$ are homothetic convex bodies.  If $m=p+1$, let $L_j=a_jK_j$, $j=1,\dots,p+1$.  Then
$${V}\left(\sum_{j=1}^{p+1} L_j\right)^{1/n}\ge
{V}\left(\sum_{j=1}^{p} L_j\right)^{1/n}+V(L_{p+1})^{1/n}\ge \sum_{j=1}^{p+1}{V}(L_j)^{1/n},$$
so if equality holds in (\ref{BM}) when $m=p+1$, we conclude that $L_1,\dots,L_p$ are homothetic and that $L_{p+1}$ is homothetic to $L_1+\cdots+L_p$.  This means that $L_1,\dots,L_{p+1}$ and hence $K_1,\dots,K_{p+1}$ are homothetic convex bodies.  The equality condition then follows by induction on $m$.
\end{proof}

The assumption that $a_j\neq 0$, $j=1,\ldots,m$, for some $(a_1,\ldots,a_m)\in M$ cannot be omitted in the equality condition for Lemma~\ref{BrunnMinkowski}, even when $m=2$.  For example, let $M=[e_1,e_2]$ and let $K,L\in {\mathcal{K}}^n_o$.  Then, as was mentioned in Section~\ref{Maddition}, $K\oplus_M L=K+_{\infty} L=\conv(K\cup L)$ and equality holds in (\ref{BMI}) with $m=2$, $K_1=K$, and $K_2=L$, when $(a_1,a_2)=e_2$ (or $(a_1,a_2)=e_1$) if and only if $K\subset L$ (or $L\subset K$, respectively), so $K$ and $L$ need not be homothetic.

Recall that the support set $F(K,x)$ of a compact convex set $K$ with outer normal vector $x\neq o$ is defined by (\ref{suppset}).

\begin{cor}\label{corBrunnMinkowski}
Let $M\in {{\mathcal{C}}^m}$ and let $K_1,\ldots,K_m\in {{\mathcal{C}}^n}$. Then
\begin{equation}\label{BMI2}
{V}(\oplus_M (K_1,\ldots,K_m))^{1/n} \ge h_{\conv M}\left({V}(K_1)^{1/n},\ldots,{V}(K_m)^{1/n}\right).
\end{equation}
If ${V}(K_j)>0$ for $j=1,\ldots,m$, $M\cap F(\conv M,x)\not\subset \cup_{j=1}^me_j^{\perp}$ for all $x\in (0,\infty)^m$, and equality holds
in \eqref{BMI2}, then $K_1,\ldots,K_m$ are homothetic convex bodies.
\end{cor}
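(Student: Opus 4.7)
The plan is to derive Corollary~\ref{corBrunnMinkowski} directly from Lemma~\ref{BrunnMinkowski} by taking a supremum, and then use the non-degeneracy hypothesis on faces to recover an equality case to which Lemma~\ref{BrunnMinkowski}'s equality clause applies. Set $v=(V(K_1)^{1/n},\ldots,V(K_m)^{1/n})\in[0,\infty)^m$. For the inequality, I would observe that for every $a=(a_1,\ldots,a_m)\in M$, since $v$ has nonnegative coordinates, $a\cdot v\le\sum_j|a_j|v_j$, so Lemma~\ref{BrunnMinkowski} yields
\begin{equation*}
V(\oplus_M(K_1,\ldots,K_m))^{1/n}\ge \sum_{j=1}^m|a_j|v_j\ge a\cdot v.
\end{equation*}
Taking the supremum over $a\in M$ and using $h_{\conv M}(v)=\sup_{a\in M}a\cdot v$ gives \eqref{BMI2}.

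For equality, assume $V(K_j)>0$ for all $j$ (so $v\in(0,\infty)^m$) and that equality holds in \eqref{BMI2}. Since $M$ is compact, so is $\conv M$, and the supremum defining $h_{\conv M}(v)$ is attained on the support set $F(\conv M,v)$. By Carath\'eodory's theorem every point of $F(\conv M,v)$ is a convex combination of extreme points of $\conv M$, which lie in $M\cap F(\conv M,v)$; in particular $M\cap F(\conv M,v)\neq\emptyset$. The hypothesis $M\cap F(\conv M,v)\not\subset\bigcup_je_j^\perp$ therefore supplies some $a^\ast\in M\cap F(\conv M,v)$ with $a^\ast_j\neq 0$ for every $j$.

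For this $a^\ast$, $a^\ast\cdot v=h_{\conv M}(v)$, so the displayed chain above becomes a string of equalities:
\begin{equation*}
V(\oplus_M(K_1,\ldots,K_m))^{1/n}=\sum_{j=1}^m|a^\ast_j|v_j=a^\ast\cdot v.
\end{equation*}
The second equality combined with $v_j>0$ forces $a^\ast_j=|a^\ast_j|\ge 0$, and together with $a^\ast_j\neq 0$ gives $a^\ast_j>0$ for every $j$. The first equality is precisely the equality case of Lemma~\ref{BrunnMinkowski} at $a^\ast$, which, since all $a^\ast_j\neq 0$, concludes that $K_1,\ldots,K_m$ are homothetic convex bodies.

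The only non-routine ingredient is ensuring the existence of the extreme-point $a^\ast$ with all coordinates nonzero; this is exactly what the awkward-looking hypothesis $M\cap F(\conv M,x)\not\subset\bigcup_je_j^\perp$ is designed to provide. Everything else is bookkeeping around the support function and the already-proved Lemma~\ref{BrunnMinkowski}, so I do not expect any serious obstacle.
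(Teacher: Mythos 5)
Your proof is correct and takes essentially the same route as the paper: obtain \eqref{BMI2} by maximizing the right-hand side of \eqref{BMI} over $a\in M$, then use the hypothesis on $M\cap F(\conv M,v)$ to extract a maximizer $a^\ast$ with all coordinates nonzero (hence positive, since $v\in(0,\infty)^m$), to which the equality clause of Lemma~\ref{BrunnMinkowski} applies. The only cosmetic point is that your appeal to Carath\'eodory/extreme points to show $M\cap F(\conv M,v)\neq\emptyset$ is superfluous, since the hypothesis $M\cap F(\conv M,v)\not\subset\bigcup_{j}e_j^{\perp}$ already forces that intersection to be nonempty.
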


\begin{proof}
Clearly,
$$h_{\conv M}(x)=\max\{x\cdot y: y\in M\},$$
for $x\in\R^m$. The inequality (\ref{BMI}) is optimal when we choose $a_1,\ldots,a_m$ so that the right-hand side is as large as possible, i.e., when the right-hand side equals
$$\max\left\{(a_1,\ldots,a_m)\cdot \left( {V}(K_1)^{1/n},\dots,{V}(K_m)^{1/n}
\right):(a_1,\ldots,a_m)\in M\right\}$$
$$=h_{\conv M}\left({V}(K_1)
^{1/n},\dots,{V}(K_m)^{1/n}\right).$$
This proves (\ref{BMI2}).

Suppose that  ${V}(K_j)>0$ for $j=1,\ldots,m$, $M\cap F(\conv M,x)$ is not contained in the union $\cup_{j=1}^me_j^{\perp}$ of the coordinate hyperplanes  for all $x\in (0,\infty)^2$, and equality holds in (\ref{BMI2}).  Then $x=\left({V}(K_1)
^{1/n},\ldots,{V}(K_m)^{1/n}\right)\in (0,\infty)^m$ and the condition $M\cap F(\conv M,x)\not\subset \cup_{j=1}^me_j^{\perp}$ means that there is an $(a_1,\ldots,a_m)\in M$ with $a_j\neq 0$, $j=1,\ldots,m$, for which
\begin{equation}\label{abh}
(a_1,\ldots,a_m)\cdot \left( {V}(K_1)^{1/n},\ldots,{V}(K_m)^{1/n}
\right)=h_{\conv M}\left({V}(K_1)
^{1/n},\ldots,{V}(K_m)^{1/n}\right).
\end{equation}
Now by (\ref{BMI2}) and (\ref{abh}), we must have $a_j\ge 0$, $j=1,\ldots,m$, and equality in (\ref{BMI}) for this $(a_1,\ldots,a_m)$, so the equality condition follows from that in Lemma~\ref{BrunnMinkowski}.
\end{proof}

Consider the case $m=2$ of the previous corollary, with $K_1=K$ and $K_2=L$.  Suppose that ${V}(K){V}(L)>0$.  If $M=\{(1,1)\}$, then $K\oplus_M L=K+L$ and equality holds in Lemma~\ref{BrunnMinkowski}, under the conditions stated there, or in (\ref{BMI2}), precisely when $K$ and $L$ are homothetic convex bodies.  On the other hand, if $M$ is such that $K\oplus_M L=K+_pL$ for some $1<p<\infty$, then equality holds in Lemma~\ref{BrunnMinkowski}, under the conditions stated there, or in (\ref{BMI2}), if and only if $K,L\in {\mathcal{K}}^n_o$ and are dilatates of each other (see Corollary~\ref{LpBrunnMinkowski} below).  Moreover, if $M=[e_1,e_2]$, then there is equality in (\ref{BMI2}) if and only if $K\subset L$ or $L\subset K$.  (This also shows that the condition that $M\cap F(\conv M,x)\not\subset l_{e_1}\cup l_{e_2}$ for all $x\in (0,\infty)^2$ cannot be removed in the statement of Corollary~\ref{corBrunnMinkowski}.)
A general discussion of equality in (\ref{BMI}) and (\ref{BMI2}) appears to be complicated and we focus on a special case of particular interest.

\begin{lem}\label{eqcase2}
Let $M\in {{\mathcal{K}}^2}$ be contained in $[0,1]^2$ and contain $e_1$ and $e_2$.  Suppose further that each point in $\cl(\partial M\cap (0,1)^2)$ is contained in a unique supporting line to $M$ with outer normal vector in $[0,\infty)^2\setminus \{o\}$. Let $K,L\in {\mathcal{C}}^n$ be such that ${V}(K){V}(L)>0$ and either equality holds in \eqref{BMI} for some $(a,b)\in M$  or equality holds in \eqref{BMI2}. Then $K,L\in {\mathcal{K}}^n_o$ and are dilatates of each other.
\end{lem}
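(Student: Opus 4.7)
The plan is to reduce both equality hypotheses to a common situation in which Lemma~\ref{BrunnMinkowski} yields that $K$ and $L$ are homothetic convex bodies, and then to use the extra structure of $\oplus_M$ together with the hypothesis on $M$ to upgrade ``homothetic'' to ``contained in $\K^n_o$ and dilatates''. Setting $x=(V(K)^{1/n},V(L)^{1/n})\in(0,\infty)^2$, I would use $\oplus_M(K,L)\supseteq aK+bL$ together with the Brunn-Minkowski inequality for compact sets to obtain the chain
$$V(\oplus_M(K,L))^{1/n}\ge V(aK+bL)^{1/n}\ge aV(K)^{1/n}+bV(L)^{1/n},\qquad (a,b)\in M.$$
Taking suprema, equality in \eqref{BMI} at some $(a,b)\in M$ propagates to equality in \eqref{BMI2}; conversely, equality in \eqref{BMI2} forces equality in \eqref{BMI} at any maximizer $(a^*,b^*)$ of $(a,b)\mapsto (a,b)\cdot x$ on $M$. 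Since $x\in(0,\infty)^2$, the direction $x$ lies in the outer normal cone at $(a^*,b^*)$; the hypothesis on $M$, together with $e_1,e_2\in M\subset[0,1]^2$, will then allow me to choose such a maximizer with $a^*,b^*>0$. The equality case of Lemma~\ref{BrunnMinkowski} will then give that $K$ and $L$ are homothetic convex bodies.

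Moreover, equality throughout the chain also gives $V(\oplus_M(K,L))=V(a^*K+b^*L)$; since both sets are convex bodies of equal volume with one contained in the other, they must coincide, so $\oplus_M(K,L)=a^*K+b^*L$. Consequently $aK+bL\subseteq a^*K+b^*L$ for every $(a,b)\in M$, which in terms of support functions reads
$$(a-a^*)h_K(u)+(b-b^*)h_L(u)\le 0,\qquad u\in\R^n,\ (a,b)\in M.$$
Equivalently, $(h_K(u),h_L(u))$ lies in the outer normal cone $N^*:=N(M,(a^*,b^*))$ for every $u\in\R^n$.

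The main technical step will be the identification of $N^*$. For each $u\ne o$ the width-vector $(h_K(u)+h_K(-u),h_L(u)+h_L(-u))$ belongs to $N^*\cap(0,\infty)^2$, because $K$ and $L$ are $n$-dimensional convex bodies. The uniqueness hypothesis implies that $N^*\cap([0,\infty)^2\setminus\{o\})$ is a single ray; the positivity of the width-vectors forces this ray to lie in $(0,\infty)^2$, so it is generated by some $(p,q)$ with $p,q>0$. A case analysis of the angular extent of the closed convex cone $N^*\subset\R^2$ then shows that when $N^*\cap[0,\infty)^2$ is a single ray in the open first quadrant, $N^*$ must coincide with that ray. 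Feeding $N^*=\{t(p,q):t\ge 0\}$ with $p,q>0$ back into $(h_K(u),h_L(u))\in N^*$ yields $h_K(u),h_L(u)\ge 0$ for all $u$ (so $o\in K$ and $o\in L$) and $qh_K(u)=ph_L(u)$ for all $u$ (so $K=(p/q)L$), completing the proof. I expect the main obstacle to be precisely this normal-cone analysis: while the hypothesis controls $N^*$ only in the first quadrant, the full cone could in principle extend into $\{y_1<0\}\cup\{y_2<0\}$ as a wedge, and the combination of width-positivity and the single-ray hypothesis in the first quadrant is exactly what is needed to force $N^*$ to collapse to a single ray in $(0,\infty)^2$.
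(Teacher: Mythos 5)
Your strategy is sound and, in its endgame, genuinely different from the paper's. After reaching $K\oplus_M L=a^*K+b^*L$ with $K,L$ homothetic convex bodies, the paper first proves $o\in K\cap L$ by a separate support-function computation (using $e_1,e_2\in M$ to get $K,L\subset a^*K+b^*L$ and playing the two resulting inequalities off against each other), and only then uses uniqueness of the first-quadrant normal direction at $(a^*,b^*)$ to get the dilatation. Your idea of feeding the width vectors $\left(h_K(u)+h_K(-u),h_L(u)+h_L(-u)\right)$ into the normal cone $N^*$ to pin it down as a single ray in $(0,\infty)^2$, and then reading off $o\in K\cap L$ and $K=(p/q)L$ simultaneously, is cleaner and avoids that separate step. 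However, there are two gaps.

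First, to invoke the uniqueness hypothesis at $(a^*,b^*)$ you need $(a^*,b^*)\in\cl(\partial M\cap(0,1)^2)$, hence $a^*,b^*<1$ as well as $a^*,b^*>0$; you address only positivity. Ruling out a maximizer with $a^*=1$ or $b^*=1$ is where the hypothesis does real work: one must check (as the paper does) that a point of $F(M,x)\cap\left([e_1,e_1+e_2]\cup[e_2,e_1+e_2]\right)$ with $x\in(0,\infty)^2$ lies in $\cl(\partial M\cap(0,1)^2)$ and admits two distinct supporting lines with outer normal in $[0,\infty)^2\setminus\{o\}$ (one with normal $e_1$ or $e_2$ coming from $M\subset[0,1]^2$, one with normal $x$), contradicting uniqueness. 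Second, your cone lemma is false as stated: $N^*$ could be the full line $\{t(p,q):t\in\R\}$, whose intersection with $[0,\infty)^2$ is still the single ray $R$, yet $N^*\neq R$; in that case $(h_K(u),h_L(u))$ could lie on the negative half of the line and you would lose $o\in K\cap L$ (indeed, for $M=[e_1,e_2]$, where $\oplus_M$ is $\conv(K\cup L)$, the conclusion of the lemma genuinely fails). You must exclude this case: $N(M,(a^*,b^*))$ contains a line only if $M$ lies in a line, which together with $e_1,e_2\in M\subset[0,1]^2$ forces $M=[e_1,e_2]$, and that $M$ violates the uniqueness hypothesis (at $e_1$, which lies in $\cl(\partial M\cap(0,1)^2)$ and whose normal cone is a half-plane). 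With these two repairs your argument goes through.
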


\begin{proof}
Suppose that equality holds in (\ref{BMI}) for some $(a,b)\in M$.
Let $x_0=\left({V}(K)^{1/n},{V}(L)^{1/n}\right)\in (0,\infty)^2$.  Then
\begin{equation}\label{very1}
V(K\oplus_M L)^{1/n}\ge h_M(x_0)\ge (a,b)\cdot x_0= a V(K)^{1/n}+b V(L)^{1/n},
\end{equation}
where the left-hand inequality comes from Corollary~\ref{corBrunnMinkowski}.  Our assumption implies that equality holds throughout (\ref{very1}) and hence $h_M(x_0)= (a,b)\cdot x_0$. This shows that $x_0\in (0,\infty)^2$ is an outer normal vector to $M$ at $(a,b)\in\partial M$.  Then the properties of $M$ we assume imply that
$$
F(M,x_0)\cap ([e_1,e_1+e_2]\cup[e_2,e_1+e_2])=\emptyset,
$$
since any point in the left-hand side is contained in $\cl(\partial M\cap (0,1)^2)$ and also in more than one supporting line to $M$ with outer normal vector in $[0,\infty)^2\setminus\{o\}$. We conclude that $a,b\in (0,1)$ and that the assumptions in the equality condition in Corollary~\ref{corBrunnMinkowski} are satisfied, so $K$ and $L$ are homothetic convex bodies.

Since $K\oplus_M L\supset aK+bL$ and
$$
V(K\oplus_M L)^{1/n}\ge V(aK+bL)^{1/n}\ge aV(K)^{1/n}+bV(L)^{1/n},
$$
we must again have equality throughout, and therefore $K\oplus_M L=aK+bL$. In particular, we have $rK+sL\subset aK+bL$ for all $(r,s)\in M$.

We claim that $o\in K\cap L$. To see this, note that since $e_1,e_2\in M$, we have $K,L\subset aK+bL$. Therefore $(1-a)h_K\le bh_L$ and $(1-b)h_L\le ah_K$, where $a,b\in (0,1)$. It follows that $h_K\le c_0 h_K$ and $h_L\le c_0 h_L$ with $c_0=ab(1-a)^{-1}(1-b)^{-1}$, and hence $(c_0-1)h_K\ge 0$ and $(c_0-1)h_L\ge 0$.
If $c_0>1$, then $h_K\ge 0$ and $h_L\ge 0$, and therefore $o\in K\cap L$.
If $c_0=1$, then $a+b=1$; since $(a,b)\in \partial M$ and $e_1,e_2\in M$, this yields $[e_1,e_2]\subset \partial M$, contradicting our assumptions on $M$.
If $c_0<1$, then $h_K\le 0$ and $h_L\le 0$, which implies that $K=L=\{o\}$, again contradicting our assumptions. This proves the claim.

We now know that $K,L$ are homothetic convex bodies containing the origin.  By (\ref{nn}) and the fact that $K\oplus_M L=aK+bL$, we have
$$h_M\left(h_K(x),h_L(x)\right)=(a,b)\cdot \left(h_K(x),h_L(x)\right),$$
for all $x\in \R^n$, where $(a,b)\in \partial M\cap (0,1)^2$ and $(h_K(x),h_L(x))\in [0,\infty)^2$.  Therefore $(a,b)$ lies in the unique supporting hyperplane to $M$ with outer normal vector $(h_K(x),h_L(x))$, for every $x\in \R^n\setminus\{o\}$ for which $(h_K(x),h_L(x))\neq o$.  By our hypothesis on $\partial M\cap (0,1]^2$, such outer normal vectors are unique, up to multiplication by a constant factor. It follows that there is a $u\in S^{n-1}$ such that for each $x\in \R^n\setminus\{o\}$, there is a $c(x)\ge 0$ such that
$$(h_K(x),h_L(x)) = c(x)(h_K(u),h_L(u)).$$
Moreover, $h_K(u)\neq 0$ and $h_L(u)\neq 0$, since $e_1,e_2\in M$ and $a,b\in (0,1)$. But then
$$
h_K(x)=\frac{h_K(u)}{h_L(u)}c(x)h_L(u)=\frac{h_K(u)}{h_L(u)}h_L(x),
$$
i.e., $h_K$ and $h_L$ are the same up to a nonzero constant multiple, so $K$ and $L$ are dilatates of each other. This establishes the first part of the lemma.

Now suppose that equality holds in (\ref{BMI2}), i.e., we have
$$
V(K\oplus_M L)^{1/n}=h_M(V(K)^{1/n},V(L)^{1/n}).
$$
Then there exists a point $(a,b)\in M$ such that
$$
V(K\oplus_M L)^{1/n}=(a,b)\cdot \left(V(K)^{1/n},V(L)^{1/n}\right)=aV(K)^{1/n}+bV(L)^{1/n},
$$
and thus equality holds in (\ref{BMI}). The assertion now follows from the first part of the lemma.
\end{proof}

\begin{cor}\label{corBrunnMinkowski2new}
If $\varphi \in\Phi_2$ and $K,L\in {{\mathcal{C}}^n}$, then
\begin{equation}\label{Orlnew}
{V}(K+_{\varphi}L)^{1/n} \ge
h_{J_{\varphi}^{\circ}}\left({V}(K)^{1/n},
{V}(L)^{1/n}\right).
\end{equation}
When $\varphi$ is strictly convex and ${V}(K){V}(L)>0$, equality holds if and only if $K,L\in {{\mathcal{K}}^n_o}$ and are dilatates of each other.
\end{cor}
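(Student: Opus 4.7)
The plan is to reduce Corollary~\ref{corBrunnMinkowski2new} to Corollary~\ref{corBrunnMinkowski} and Lemma~\ref{eqcase2}, applied to the set $M=J_{\varphi}^{\circ}\cap[0,\infty)^2$ that, by the extension \eqref{ExtOrldeff}, satisfies $K+_{\varphi}L=\oplus_M(K,L)$. First I would observe that $M$ is convex (as an intersection of two convex sets), so $\conv M=M$; and since $J_{\varphi}^{\circ}$ is $1$-unconditional, for any $(s_1,s_2)\in[0,\infty)^2$ the supremum defining $h_{J_{\varphi}^{\circ}}(s_1,s_2)$ is attained on $J_{\varphi}^{\circ}\cap[0,\infty)^2=M$, so $h_M$ agrees with $h_{J_{\varphi}^{\circ}}$ on $[0,\infty)^2$. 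Substituting $(V(K)^{1/n},V(L)^{1/n})$ into the conclusion of Corollary~\ref{corBrunnMinkowski} then yields \eqref{Orlnew} immediately.

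For the equality assertion, the task is to verify the hypotheses of Lemma~\ref{eqcase2}. That $M\subset[0,1]^2$ and $e_1,e_2\in M$ are immediate from the inclusions $\conv\{\pm e_1,\pm e_2\}\subset J_{\varphi}\subset[-1,1]^2$ (the first from convexity of $\varphi$ together with $\varphi(e_j)=1$, the second from the definition of $J_{\varphi}$) via polarity. The main obstacle is to establish that every point of $\cl(\partial M\cap(0,1)^2)$ admits a unique supporting line to $M$ with outer normal in $[0,\infty)^2\setminus\{o\}$. Here I would exploit the classical duality that a convex body with interior point $o$ is strictly convex if and only if its polar is smooth. Strict convexity of $\varphi$ on $[0,\infty)^2$, combined with monotonicity, forces $\varphi(s,0)>1$ for $s>1$ (since otherwise strict convexity along the $x_1$-axis together with $\varphi(1,0)=1$ would force a value $<1$ beyond $s=1$, contradicting monotonicity), so $\{\widetilde\varphi\le 1\}\subset[-1,1]^2$ and thus $\partial J_{\varphi}=\{\widetilde\varphi=1\}$. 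Any segment in $\partial J_{\varphi}$ on which $\widetilde\varphi\equiv 1$ would then immediately contradict the strict convexity of $\varphi$ on $[0,\infty)^2$ (treating segments confined to a single quadrant directly, and segments crossing an axis by using strict convexity of $\widetilde\varphi$, which follows from that of $\varphi$ together with monotonicity). Hence $J_{\varphi}$ is strictly convex, so $J_{\varphi}^{\circ}$ is smooth. At a point of $\partial M\cap(0,1)^2$, the unique supporting line to $M$ is therefore the tangent line to $J_{\varphi}^{\circ}$, whose outer normal lies in $(0,\infty)^2$ by $1$-unconditional symmetry; at the two limit points $e_1,e_2$, smoothness and $1$-unconditionality force the unique tangent to $J_{\varphi}^{\circ}$ to be $\{x_j=1\}$ with outer normal $e_j\in[0,\infty)^2\setminus\{o\}$. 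Lemma~\ref{eqcase2} then gives that $K,L\in\K^n_o$ and are dilatates of each other.

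Finally, for the converse direction, assume $K\in\K^n_o$ and $L=cK$ for some $c>0$. Since $K$ is convex with $o\in K$, one has $aK+bcK=(a+bc)K$ for all $a,b\ge 0$, and so \eqref{Mndef} gives
$$
K\oplus_M cK=\bigcup_{(a,b)\in M}(a+bc)K=h_M(1,c)\,K.
$$
Consequently
$$
V(K+_{\varphi}L)^{1/n}=h_M(1,c)V(K)^{1/n}=h_{J_{\varphi}^{\circ}}\!\left(V(K)^{1/n},V(L)^{1/n}\right),
$$
establishing equality in \eqref{Orlnew} and completing the proof.
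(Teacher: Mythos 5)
Your proof is correct, and its skeleton --- reduce to Corollary~\ref{corBrunnMinkowski} and Lemma~\ref{eqcase2} with an $M$ built from $J_{\varphi}^{\circ}$, and obtain the unique-supporting-line hypothesis from strict convexity of $\varphi$, hence of $J_{\varphi}$, via the strict-convexity/smoothness polarity --- is exactly the paper's. The one substantive difference is which extension of $+_{\varphi}$ to nonconvex compact sets you prove the statement for: you use the $M$-addition extension \eqref{ExtOrldeff} with $M=J_{\varphi}^{\circ}\cap[0,\infty)^2$, whereas the paper's proof uses the parametrized-curve extension \eqref{defarb}, and therefore works with the curve $M=\{(\widehat{\psi_1}^{-1}(1-t),\widehat{\psi_2}^{-1}(t)):0\le t\le 1\}$ and its convex hull (and treats $J_{\varphi}^{\circ}=[-1,1]^2$ separately as ordinary Minkowski addition). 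The paper's remark immediately following the corollary states that the result also holds for the definition \eqref{ExtOrldeff} ``by a similar argument,'' and your argument is precisely that similar argument; it is in fact slightly cleaner, since your $M$ is already convex and compact, so Corollary~\ref{corBrunnMinkowski} and Lemma~\ref{eqcase2} apply without any curve-versus-convex-hull bookkeeping, and the case $J_{\varphi}^{\circ}=[-1,1]^2$ needs no separate treatment (for the equality discussion it is excluded automatically by strict convexity). You also supply two things the paper leaves implicit: a detailed verification that strict convexity of $\varphi$ forces $J_{\varphi}$ to be strictly convex and that the endpoints $e_1,e_2$ of $\cl(\partial M\cap(0,1)^2)$ satisfy the hypothesis of Lemma~\ref{eqcase2} (the paper simply asserts both), and the explicit computation $K\oplus_M cK=h_M(1,c)K$ establishing the ``if'' direction of the equality condition, which the paper does not spell out.
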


\begin{proof}
If $J_{\varphi}^{\circ}=[-1,1]^2$, then $+_{\varphi}:\left({\mathcal{K}}^n_o\right)^2\rightarrow {\mathcal{K}}^n_o$ is Minkowski addition and $K+_{\varphi}L=K+L$ is given by (\ref{Mextens}) when $K,L\in {{\mathcal{C}}^n}$.  Therefore  (\ref{Orlnew}) is just the usual Brunn-Minkowski inequality for compact sets.

Otherwise, we have defined $K+_{\varphi}L$ for arbitrary sets $K$ and $L$ by (\ref{defarb}) and shown that it agrees with our previous definition when $K,L\in {{\mathcal{K}}^n_o}$. Let
$$M=\left\{\left(\widehat{\psi_1}^{-1}(1-t),\widehat{\psi_2}^{-1}(t)\right): 0\le t\le 1\right\},$$
where $\widehat{\psi_1}$ and $\widehat{\psi_2}$ are as in (\ref{defarb}). Then $M\in {{\mathcal{C}}^2}$ and
$$h_{\conv M}(s,t)=h_{J_{\varphi}^{\circ}}(s,t),$$
for $s,t\ge 0$.  By Corollary~\ref{corBrunnMinkowski}, we have
$${V}(K\oplus_M L)^{1/n} \ge h_{J_{\varphi}^{\circ}}\left({V}(K)^{1/n},
{V}(L)^{1/n}\right),$$
for all $K,L\in {{\mathcal{C}}^n}$.  The result follows, since (\ref{defarb}) implies that $K\oplus_M L=K+_{\varphi}L$ for arbitrary $K$ and $L$.

Suppose that $\varphi$ is strictly convex.  Then  $J_{\varphi}$ is strictly convex.  By \cite[p.~107]{Sch93}, $h_{J_{\varphi}}$ is smooth (i.e., of class $C^1$).  Therefore $\rho_{J_{\varphi}}^{\circ}=1/h_{J_{\varphi}}$ and hence $J_{\varphi}^{\circ}$ itself is also smooth.  From this we conclude that
$J_{\varphi}^{\circ}\cap [0,\infty)^2$ satisfies the hypotheses on $M$ in Lemma~\ref{eqcase2} and the stated equality condition follows from that lemma.
\end{proof}

The Orlicz sum in \eqref{Orlnew} was considered to be defined via (\ref{defarb}). The result remains true (by a similar argument), if we
take \eqref{ExtOrldeff} to define the Orlicz sum, and then the inequality holds for $m\ge 2$. A similar remark applies to the
next result.

\begin{cor}\label{OBMnewer}
Let $\varphi\in \Phi_2$.  If $K, L\in {\mathcal{C}}^n$ and ${V}(K){V}(L)>0$, then
\begin{equation}\label{OBMnn}
1\ge \varphi\left(\left(\frac{{V}(K)}
{{V}(K+_{\varphi}L)}\right)^{1/n},\left(\frac{{V}(L)}
{{V}(K+_{\varphi}L)}\right)^{1/n}\right).
\end{equation}
When $\varphi$ is strictly convex, equality holds if and only if $K,L\in {{\mathcal{K}}^n_o}$ and are dilatates of each other.
\end{cor}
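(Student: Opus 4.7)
The plan is to derive \eqref{OBMnn} as an essentially immediate reformulation of Corollary~\ref{corBrunnMinkowski2new} by exploiting the implicit characterization of $h_{J_{\varphi}^{\circ}}$ in terms of $\varphi$. I would set $\lambda=V(K+_{\varphi}L)^{1/n}$ and $\lambda_0=h_{J_{\varphi}^{\circ}}\bigl(V(K)^{1/n},V(L)^{1/n}\bigr)$. Corollary~\ref{corBrunnMinkowski2new} gives $\lambda\ge\lambda_0$, and both quantities are positive because $V(K)V(L)>0$ and because $J_{\varphi}^{\circ}$, being $1$-unconditional with $e_1,e_2$ in its boundary/interior pairing with $J_{\varphi}$, has support function strictly positive on $(0,\infty)^2$.

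The key identity is the one extracted in \eqref{eq2} during the proof of Theorem~\ref{Orthm2}: for any $w\in(0,\infty)^2$, the value $h_{J_{\varphi}^{\circ}}(w)$ is the unique positive $\lambda$ satisfying $\varphi(w/\lambda)=1$. Applied to $w=(V(K)^{1/n},V(L)^{1/n})$, this yields $\varphi\bigl(V(K)^{1/n}/\lambda_0,V(L)^{1/n}/\lambda_0\bigr)=1$. Since $\varphi$ is increasing in each variable and $\lambda\ge\lambda_0$, replacing $\lambda_0$ by $\lambda$ decreases both arguments and hence decreases the value of $\varphi$, giving exactly \eqref{OBMnn}.

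For the equality clause when $\varphi$ is strictly convex, I first need the auxiliary observation that a strictly convex $\varphi\in\Phi_2$ is automatically strictly increasing in each variable: if $\varphi$ took the same value at $(a',b)$ and $(a,b)$ with $a'<a$, then convexity plus monotonicity would pin $\varphi$ to that common value along the whole segment between the two points, contradicting strict convexity. This is the only genuinely non-mechanical step, but it is short. Granted this, equality in \eqref{OBMnn} means $\varphi\bigl(V(K)^{1/n}/\lambda,V(L)^{1/n}/\lambda\bigr)=1=\varphi\bigl(V(K)^{1/n}/\lambda_0,V(L)^{1/n}/\lambda_0\bigr)$; strict monotonicity together with $\lambda\ge\lambda_0$ and $V(K),V(L)>0$ forces $\lambda=\lambda_0$, i.e.\ equality in \eqref{Orlnew}. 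By the equality condition of Corollary~\ref{corBrunnMinkowski2new}, $K,L\in {\mathcal{K}}^n_o$ and are dilatates of each other. The converse is immediate: if $K,L\in {\mathcal{K}}^n_o$ are dilatates, Corollary~\ref{corBrunnMinkowski2new} gives $\lambda=\lambda_0$, and the defining identity then gives equality in \eqref{OBMnn}. No further obstacles arise; the entire proof is a short unwinding once Corollary~\ref{corBrunnMinkowski2new} and \eqref{eq2} are in hand.
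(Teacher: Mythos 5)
Your proposal is correct and follows essentially the same route as the paper: both derive \eqref{OBMnn} from \eqref{Orlnew} together with the identity $\varphi\bigl(w/h_{J_{\varphi}^{\circ}}(w)\bigr)=1$ from \eqref{eq2}, using monotonicity of $\varphi$, and both reduce the equality case to that of Corollary~\ref{corBrunnMinkowski2new} via strict monotonicity. The only difference is that you supply a short justification that strict convexity forces strict monotonicity in each variable, a fact the paper asserts without proof; your argument for it is sound.
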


\begin{proof}
Using (\ref{eq2}), we obtain
\begin{equation}\label{assist2}
\varphi\left(\frac{{V}(K)^{1/n}}{h_{J_{\varphi}^{\circ}}\left({V}(K)^{1/n},
{V}(L)^{1/n}\right)},\frac{{V}(L)^{1/n}}
{h_{J_{\varphi}^{\circ}}\left({V}(K)^{1/n},{V}(L)^{1/n}\right)}\right)=1.
\end{equation}
(This is because we can take $s=V(K)^{1/n}$ and $t=V(L)^{1/n}$, and it was proved after (\ref{eq2}) that we have $M=J_{\varphi}^{\circ}$.)  Since $\varphi$ is increasing in each variable, (\ref{OBMnn}) follows from (\ref{Orlnew}) and (\ref{assist2}). If $\varphi$  is strictly convex, then it is strictly increasing in each variable and the equality condition in (\ref{OBMnn}) follows directly from (\ref{assist2}) together with (\ref{Orlnew}) and its equality condition.
\end{proof}

It is also true that (\ref{Orlnew}) and its equality condition follows directly from (\ref{assist2}) and (\ref{OBMnn}) and its equality condition, so the two inequalities are equivalent.

Recall that the $L_p$ sum of arbitrary sets $K$ and $L$ in $\R^n$ can be defined by (\ref{LYZLp}).  The following inequality was first proved by Firey \cite{Fir62} for convex bodies containing the origin in their interiors.

\begin{cor}\label{LpBrunnMinkowski}  {\rm(Lutwak, Yang, and Zhang \cite{LYZ}.)}
Let $p>1$. If $K,L\in {\mathcal{C}}^n$, then
$$
{V}(K+_pL)^{p/n} \ge {V}(K)^{p/n}+
{V}(L)^{p/n}.
$$
If ${V}(K){V}(L)>0$, then equality holds if and only if $K,L\in {\mathcal{K}}^n_o$ and are dilatates of each other.
\end{cor}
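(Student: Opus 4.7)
The plan is to apply Corollary~\ref{corBrunnMinkowski2new} to the function $\varphi\in\Phi_2$ defined by $\varphi(x_1,x_2)=x_1^p+x_2^p$, after verifying that for this choice the Orlicz sum $K+_\varphi L$ defined via \eqref{defarb} coincides with the Lutwak-Yang-Zhang $L_p$ sum $K+_p L$ defined by \eqref{LYZLp}. For this $\varphi$, the body $J_\varphi=\{(x_1,x_2)\in[-1,1]^2:|x_1|^p+|x_2|^p\le 1\}$ is the unit ball of $\ell_p^2$, so its polar $J_\varphi^\circ$ is the unit ball of $\ell_{p'}^2$, where $1/p+1/p'=1$. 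Hence $J_\varphi^\circ\cap[0,\infty)^2=\{(x_1,x_2)\in[0,1]^2:x_1^{p'}+x_2^{p'}\le 1\}$, which identifies the functions $\psi_1,\psi_2$ in \eqref{psipn} as $\psi_1(s)=\psi_2(s)=s^{p'}$. Consequently $\tau(\psi_j)=0$ and $\widehat{\psi_j}^{-1}(s)=s^{1/p'}$ for $j=1,2$, and substituting into \eqref{defarb} reproduces \eqref{LYZLp} verbatim. Thus $K+_\varphi L=K+_p L$ for arbitrary compact $K,L\subset\R^n$.

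Next, I would compute the support function $h_{J_\varphi^\circ}$ on $[0,\infty)^2$. Since $J_\varphi^\circ$ is the unit ball of $\ell_{p'}^2$ and the dual norm of $\|\cdot\|_{p'}$ is $\|\cdot\|_p$, we have $h_{J_\varphi^\circ}(s,t)=(s^p+t^p)^{1/p}$ for $s,t\ge 0$. Corollary~\ref{corBrunnMinkowski2new} then yields
$$
V(K+_p L)^{1/n}\ \ge\ h_{J_\varphi^\circ}\bigl(V(K)^{1/n},V(L)^{1/n}\bigr)\ =\ \bigl(V(K)^{p/n}+V(L)^{p/n}\bigr)^{1/p},
$$
and raising both sides to the $p$-th power delivers the desired inequality.

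Finally, for the equality characterization under $V(K)V(L)>0$, it suffices to observe that $\varphi(x_1,x_2)=x_1^p+x_2^p$ is strictly convex on $[0,\infty)^2$ when $p>1$. This is immediate because $s\mapsto s^p$ is strictly convex on $[0,\infty)$ and a sum of strictly convex functions of independent variables is strictly convex. The equality clause of Corollary~\ref{corBrunnMinkowski2new} then implies that equality can hold only when $K,L\in\K^n_o$ and $K$ and $L$ are dilatates of each other; the converse is checked by direct substitution into \eqref{Lpaddition}. No substantive obstacle is anticipated, since all of the essential work has already been done in establishing Corollary~\ref{corBrunnMinkowski2new}; the remaining content is the bookkeeping tied to the identifications $+_\varphi=+_p$ and $h_{J_\varphi^\circ}(s,t)=(s^p+t^p)^{1/p}$.
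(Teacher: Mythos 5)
Your proposal is correct and is essentially the paper's own proof: the paper deduces the corollary immediately from Corollary~\ref{OBMnewer} (or Corollary~\ref{corBrunnMinkowski2new}) with $\varphi(x_1,x_2)=x_1^p+x_2^p$, and you simply make explicit the routine verifications (that $J_\varphi^\circ$ is the $\ell_{p'}^2$ ball, that \eqref{defarb} reproduces \eqref{LYZLp}, that $h_{J_\varphi^\circ}(s,t)=(s^p+t^p)^{1/p}$, and that $\varphi$ is strictly convex) which the paper leaves implicit.
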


\begin{proof}
The inequality is trivial if ${V}(K)={V}(L)=0$.  Otherwise, the result follows immediately from Corollary~\ref{OBMnewer} (or Corollary~\ref{corBrunnMinkowski2new}) with $\varphi(x_1,x_2)=x_1^p+x_2^p$.
\end{proof}

\section{Orlicz linear combination and an Orlicz mixed volume}\label{OMImv}

In this section we seek to calculate the first variation of volume with respect to Orlicz addition.  In other words, we require an appropriate generalization of the $L_p$ mixed volume
\begin{equation}\label{Lpvariation}
V_p(K,L)=\frac{p}{n}\lim_{\ee\to 0+}\frac{V(K+_p\ee^{1/p} L)-V(K)}{\ee}=\frac{1}{n}\int_{S^{n-1}}
h_L(u)^ph_K(u)^{1-p}\,dS(K,u),
\end{equation}
see \cite{L1}.  The quantity $\ee^{1/p}$ appears as a consequence of the definition of an $L_p$ scalar multiplication, denoted here by $\cdot_p$, via the equation $\alpha\cdot_p K=\alpha^{1/p}K$, for all $\alpha\ge 0$ and $K\in{\mathcal K}^n_{o}$.

Suppose that $\alpha_j\ge 0$ and $\varphi_j\in \Phi$, $j=1,\dots,m$.  If $K_j\in{\mathcal K}^n_{o}$, $j=1,\dots,m$, we define the {\em Orlicz linear combination} $+_{\varphi}(K_1,\dots,K_m,\alpha_1,\dots,\alpha_m)$ by
\begin{equation}\label{hK02}
h_{+_{\varphi}(K_1,\dots,K_m,\alpha_1,\dots,\alpha_m)
}(x)=\inf\left\{\lambda>0:\sum_{j=1}^m\alpha_j\,
\varphi_j\left(\frac{h_{K_j}(x)}
{\lambda}\right)\le 1\right\},
\end{equation}
for all $x\in \R^n$. Unlike the $L_p$ case, it is not generally possible to isolate an Orlicz scalar multiplication, since there is a dependence not just on one coefficient $\alpha_j$ but on all $\alpha_1,\dots,\alpha_m$ and $K_1,\ldots,K_m$.

\begin{rem}\label{remLC}
{\em  Definition (\ref{hK02}) corresponds to taking the function $\varphi$ in (\ref{OrlComb}) and (\ref{Orldef}) to be
$$
\varphi(x_1,\dots,x_m)=\sum_{j=1}^m\alpha_j\varphi_j(x_j).
$$
Note that in this case it is no longer true, as in (\ref{spphi}), that $\varphi\in \Phi_m$.  However, $\varphi\in \overline{\Phi}_m$ as defined in Section~\ref{prelim} if $\alpha_1,\ldots,\alpha_m>0$.
}
\end{rem}

For our purposes, it suffices to focus on the case $m=2$.  The Orlicz linear combination $+_{\varphi}(K,L,\alpha,\beta)$, for $K, L\in{\mathcal K}^n_{o}$ and $\alpha, \beta\ge 0$, can be defined equivalently via the implicit equation
\begin{equation}\label{OrldefGG}
\alpha\varphi_1\left( \frac{h_K(x)}{h_{+_{\varphi}(K,L,\alpha,\beta)}(x)}\right) + \beta{\varphi}_2\left( \frac{h_L(x)}{h_{+_{\varphi}(K,L,\alpha,\beta)}(x)}\right) = 1,
\end{equation}
if $\alpha h_K(x)+\beta h_L(x)>0$, and by $h_{+_{\varphi}(K,L,\alpha,\beta)}(x)=0$ if $\alpha h_K(x)+\beta h_L(x)=0$, for all $x\in \R^n$.

It is easy to verify that when $\varphi_1(t)=\varphi_2(t)=t^p$, $p\ge 1$, the Orlicz linear combination $+_{\varphi}(K,L,\alpha,\beta)$ equals the $L_p$ linear combination $\alpha\cdot_pK+_{p} \,\beta\cdot_pL$.
Setting $L=\{o\}$ in (\ref{OrldefGG}), we see that
$$
+_{\varphi}(K,\{o\},\alpha,\beta)=\left(\widehat{\varphi_1}^{-1}
\left(\frac{1}{\alpha}\right)\right)^{-1}K,
$$
and a similar relation is obtained for $K=\{o\}$ in (\ref{OrldefGG}).
Here, as before, $\widehat{\varphi_1}$ denotes the restriction of $\varphi_1$ to $[\tau,\infty)$, where $\tau=\tau(\varphi_1)<1$ is given by (\ref{tau}).

Henceforth we shall write  $K+_{\varphi,\ee}L$ instead of $+_{\varphi}(K,L,1,\ee)$, for $\ee\ge 0$, and assume throughout that this is defined by (\ref{OrldefGG}), where $\alpha=1$, $\beta=\ee$, and $\varphi_j\in \Phi$, $j=1,2$.

\begin{lem}\label{newconvlem}
Let $K,L\in {\mathcal{K}}_{o}^n$.  Then
$$K+_{\varphi,\ee}L\to K$$
in the Hausdorff metric as $\ee\to 0+$.
\end{lem}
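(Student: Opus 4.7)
The plan is to establish pointwise convergence of the support functions $h_{K+_{\varphi,\ee}L}(x)\to h_K(x)$ for every $x\in\R^n$ as $\ee\to 0+$; since all the $K+_{\varphi,\ee}L$ are uniformly bounded for $\ee\in[0,1]$ (by monotonicity in $\ee$ of the defining inequality, $K+_{\varphi,\ee}L\subset K+_{\varphi,1}L$), this pointwise convergence is equivalent to Hausdorff convergence by \cite[Theorem~1.8.12]{Sch93}.

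Fix $x\in\R^n$ and write $\lambda_\ee=h_{K+_{\varphi,\ee}L}(x)$. If $h_K(x)=h_L(x)=0$, then $\lambda_\ee=0$ by definition, and we are done. Next, suppose $h_K(x)=0$ but $h_L(x)>0$. Then for every $\ee>0$, (\ref{OrldefGG}) reduces to $\ee\,\varphi_2(h_L(x)/\lambda_\ee)=1$, so $\varphi_2(h_L(x)/\lambda_\ee)=1/\ee\to\infty$. Since $\varphi_2\in\Phi$ is convex with $\varphi_2(0)=0$ and $\varphi_2(1)=1$, convexity forces $\varphi_2(t)\ge t$ for $t\ge 1$, so $\varphi_2$ is unbounded and increasing; hence $h_L(x)/\lambda_\ee\to\infty$, i.e., $\lambda_\ee\to 0=h_K(x)$.

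The main case is $h_K(x)>0$. First, plugging $\lambda=h_K(x)$ into the left-hand side of the defining equation gives $\varphi_1(1)+\ee\,\varphi_2(h_L(x)/h_K(x))=1+\ee\,\varphi_2(h_L(x)/h_K(x))\ge 1$. Since the left-hand side of (\ref{OrldefGG}) is decreasing in $\lambda$, this yields $\lambda_\ee\ge h_K(x)$. For an upper bound, fix $\delta>0$. Using the convexity argument that $\varphi_1(t)$ lies below the chord from $(\tau(\varphi_1),0)$ to $(1,1)$ on $[\tau(\varphi_1),1]$, we have $\varphi_1(1/(1+\delta))<1$. Therefore, substituting $\lambda=(1+\delta)h_K(x)$ and using monotonicity of $\varphi_2$, we obtain
$$
\varphi_1\!\left(\frac{1}{1+\delta}\right)+\ee\,\varphi_2\!\left(\frac{h_L(x)}{h_K(x)}\right)\le 1
$$
for all sufficiently small $\ee$, which forces $\lambda_\ee\le(1+\delta)h_K(x)$. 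Letting $\delta\to 0+$ gives $\lambda_\ee\to h_K(x)$.

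The main obstacle is the upper bound in the last case: one needs the strict inequality $\varphi_1(t)<1$ for $t\in[\tau(\varphi_1),1)$, which is precisely what convexity of $\varphi_1\in\Phi$ together with $\varphi_1(\tau(\varphi_1))=0$ and $\varphi_1(1)=1$ delivers. Once these pointwise bounds are in hand, the continuity of $\varphi_1$ on $[0,1]$ (used implicitly through the monotonic convergence of the sublevel sets) ensures that the three cases combine to give $\lambda_\ee\to h_K(x)$ for every $x$, completing the argument.
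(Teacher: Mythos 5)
Your proof is correct and takes essentially the same approach as the paper's: both reduce the claim to pointwise convergence of the support functions via the same three-case analysis on $(h_K(x),h_L(x))$ and then invoke \cite[Theorem~1.8.12]{Sch93}. The only difference is in the case $h_K(x)>0$, where you sandwich $h_{K+_{\varphi,\ee}L}(x)$ between $h_K(x)$ and $(1+\delta)h_K(x)$ by explicit estimates, while the paper extracts a convergent subsequence and identifies its limit from the defining equation; both hinge on the same facts that $\varphi_1(t)\le t<1$ for $t<1$ and $\varphi_1(t)\ge t>1$ for $t>1$.
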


\begin{proof}
Let $u\in S^{n-1}$. It follows from \eqref{OrldefGG} with $\alpha_1=1$ and $\alpha_2=\ee$ and the preceding remarks that
$$h_K(u)\le h_{K+_{\varphi,\ee}L}(u)\le h_{K+_{\varphi,1}L}(u),$$
for  all $\ee\in (0,1]$. If $h_K(u)>0$, we conclude from
$$
\varphi_1\left( \frac{h_K(u)}{h_{ K+_{\varphi,\ee}L}(u)}\right) + \ee\varphi_2\left( \frac{h_L(u)}{h_{K+_{\varphi,\ee}L}(u)}\right) = 1
$$
that if $\ee\to0+$ and if a subsequence of \{$h_{K+_{\varphi,\ee}L}(u):\ee\in (0,1]\}$ converges to a constant $k\ge h_K(u)$, then
$$
\varphi_1\left( \frac{h_K(u)}{k}\right)=\varphi_1\left( \frac{h_K(u)}{k}\right) + 0 \cdot\varphi_2\left( \frac{h_L(u)}{k}\right) = 1=\varphi_1(1)
$$
and hence $k=h_K(u)$. Therefore, in this case,  $h_{K+_{\varphi,\ee}L}(u)\to h_K(u)$ as $\ee\to 0+$.

If $h_K(u)=0$ and $h_L(u)>0$, then \eqref{OrldefGG} with $\alpha_1=1$ and $\alpha_2=\ee$ implies that
$$
\varphi_2\left( \frac{h_L(u)}{h_{K+_{\varphi,\ee}L}(u)}\right) = \frac{1}{\ee}.
$$
For $0<\ee<1/\tau_2\le \infty$, we get
$$
h_{K+_{\varphi,\ee}L}(u)=\left(\widehat{\varphi_2}^{-1}
\left(\frac{1}{\ee}\right)\right)^{-1}\, h_L(u),
$$
and thus again $h_{K+_{\varphi,\ee}L}(u)\to 0=h_K(u)$ as $\ee\to 0+$. The latter also holds if $h_K(u)=h_L(u)=0$.
This shows that $h_{K+_{\varphi,\ee}L}\to h_K$  as $\ee\to 0+$ holds pointwise, and thus also uniformly on $S^{n-1}$ (see \cite[Theorem~1.8.12]{Sch93}).
\end{proof}

\begin{lem}\label{Lutnew} Let $K,L\in {\mathcal{K}}_{o}^n$. Suppose that
$$\lim_{\ee\rightarrow 0+}\frac{h_{K+_{\varphi,\ee}L}(u)-h_K(u)}{\ee}=F_{K,L,\varphi_1, \varphi_2}(u),$$
uniformly for $u\in S^{n-1}$, where $F_{K,L,\varphi_1, \varphi_2}:S^{n-1}\rightarrow [0,\infty)$ is a measurable function.  Then
\begin{equation}\label{intrepr}
\lim_{\ee\rightarrow 0+}\frac{V(K+_{\varphi,\ee}L)-V(K)}{\ee}=\int_{S^{n-1}}
F_{K,L,\varphi_1, \varphi_2}(u)\,dS(K,u).
\end{equation}
\end{lem}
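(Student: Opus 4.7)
Set $M_\ee=K+_{\varphi,\ee}L$ and $g_\ee=(h_{M_\ee}-h_K)/\ee$, so the hypothesis provides $g_\ee\to F$ uniformly on $S^{n-1}$, where I abbreviate $F:=F_{K,L,\varphi_1,\varphi_2}$. Lemma~\ref{newconvlem} gives $M_\ee\to K$ in the Hausdorff metric. Since $\varphi_1$ is convex with $\varphi_1(0)=0$ and $\varphi_1(1)=1$, convexity yields $\varphi_1(t)\ge t$ for $t\ge 1$; comparing with \eqref{OrldefGG} (in which $\ee\varphi_2\ge 0$) this forces $h_{M_\ee}\ge h_K$, so $M_\ee\supset K$ and $V(M_\ee)\ge V(K)$. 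Substituting $h_{M_\ee}=h_K+\ee g_\ee$ into $V(M_\ee)=\tfrac{1}{n}\int_{S^{n-1}}h_{M_\ee}\,dS(M_\ee,\cdot)$ and using \eqref{V1form} to recognise $V_1(M_\ee,K)=\tfrac{1}{n}\int h_K\,dS(M_\ee,\cdot)$ yields the key identity
\begin{equation*}
V(M_\ee)-V(K)=\bigl[V_1(M_\ee,K)-V(K)\bigr]+\frac{\ee}{n}\int_{S^{n-1}}g_\ee\,dS(M_\ee,\cdot).
\end{equation*}
The weak continuity of surface area measures under Hausdorff convergence (see \cite{Sch93}), combined with the uniform convergence $g_\ee\to F$, shows that $\int g_\ee\,dS(M_\ee,\cdot)\to \int F\,dS(K,\cdot)$ as $\ee\to 0+$.

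\textbf{Sandwiching via Minkowski's first inequality.} Assume first that $V(K)>0$. The plan is to squeeze $(V(M_\ee)-V(K))/\ee$ between two quantities sharing the limit $\int F\,dS(K,\cdot)$, both produced from \eqref{MinkIneq}. For the upper bound, $V_1(K,M_\ee)^n\ge V(K)^{n-1}V(M_\ee)$, while \eqref{V1form} gives $V_1(K,M_\ee)=V(K)+\tfrac{\ee}{n}\int g_\ee\,dS(K,\cdot)$; expanding the $n$-th power to first order in $\ee$ and dividing by $V(K)^{n-1}$ yields $V(M_\ee)-V(K)\le \ee\int g_\ee\,dS(K,\cdot)+O(\ee^2)$, whence $\limsup_{\ee\to 0+}(V(M_\ee)-V(K))/\ee\le\int F\,dS(K,\cdot)$. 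For the lower bound, the companion inequality $V_1(M_\ee,K)\ge V(M_\ee)^{(n-1)/n}V(K)^{1/n}$ combined with the mean value theorem applied to $x\mapsto x^{(n-1)/n}$ on $[V(K),V(M_\ee)]$ produces
\begin{equation*}
V_1(M_\ee,K)-V(K)\ge \tfrac{n-1}{n}V(K)^{1/n}\xi_\ee^{-1/n}\bigl(V(M_\ee)-V(K)\bigr),
\end{equation*}
for some $\xi_\ee\in [V(K),V(M_\ee)]$, so $\xi_\ee\to V(K)$. Inserting this into the key identity and collecting the $V(M_\ee)-V(K)$ terms on the left produces a coefficient $1-\tfrac{n-1}{n}V(K)^{1/n}\xi_\ee^{-1/n}\to 1/n>0$; dividing through then gives $\liminf_{\ee\to 0+}(V(M_\ee)-V(K))/\ee\ge \int F\,dS(K,\cdot)$, matching the upper bound.

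\textbf{Main obstacle and the degenerate case.} The delicate point is the self-referential nature of the lower bound, in which $V(M_\ee)-V(K)$ appears on both sides; the argument closes precisely because the extracted coefficient $(n-1)/n$ is strictly less than $1$ while $V(K)^{1/n}\xi_\ee^{-1/n}\to 1$, so the leftover coefficient $1/n$ stays bounded away from zero. The degenerate case $V(K)=0$, in which $K$ lies in a proper subspace of $\R^n$ and $S(K,\cdot)$ is concentrated on the normal directions to its affine hull, requires separate treatment, either by direct geometric analysis of $V(M_\ee)$ and $\int F\,dS(K,\cdot)$ in this lower-dimensional setting, or by approximating $K$ with $K+\delta B^n$, applying the non-degenerate case, and passing to $\delta\to 0+$ using the Hausdorff continuity of Orlicz addition established in Theorem~\ref{Orthm1}.
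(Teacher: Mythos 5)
Your argument for the non-degenerate case is correct, but it is a genuinely different route from the paper's: it is precisely the Aleksandrov--Lutwak scheme (cf.\ \cite[Theorem~1.1]{L1} and \cite[Lemma~6.5.3]{Sch93}) that the paper explicitly mentions and then deliberately avoids. You sandwich $(V(K_\ee)-V(K))/\ee$ between two bounds extracted from Minkowski's first inequality \eqref{MinkIneq}, closing the self-referential lower bound because the mean-value coefficient stays bounded away from $1$; the identity $V(K_\ee)-V(K)=[V_1(K_\ee,K)-V(K)]+\tfrac{\ee}{n}\int g_\ee\,dS(K_\ee,\cdot)$, the weak convergence $S(K_\ee,\cdot)\to S(K,\cdot)$, and the continuity of $F$ (as a uniform limit of the continuous functions $g_\ee$) all check out. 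The paper instead telescopes $V(K_\ee)-V(K)$ through the mixed volumes $V(K_\ee[i],K[n-i])$, writes each increment as $\tfrac1n\int(h_{K_\ee}-h_K)\,dS(K_\ee[i],K[n-1-i],\cdot)$, and concludes from uniform convergence of the integrand and weak convergence of the mixed area measures. What the paper's route buys is exactly what your route lacks: it needs no deep inequality at all (only multilinearity and weak continuity of mixed area measures), and it requires no case distinction on $V(K)$.

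That case distinction is where your proposal has a real gap. The lemma is stated for all $K\in\K^n_o$, and when $V(K)=0$ your main argument divides by $V(K)^{n-1}$ and is void. Of your two suggested repairs, the approximation $K\rightsquigarrow K+\delta B^n$ does not work as stated: the hypothesis of the lemma is the uniform convergence of the difference quotient for the specific sum $K+_{\varphi,\ee}L$, and nothing gives you that hypothesis for $(K+\delta B^n)+_{\varphi,\ee}L$; moreover you would still have to interchange the limits in $\delta$ and $\ee$. The direct analysis is salvageable: if $\dim K\le n-2$ then $S(K,\cdot)=0$, and $K\subset K_\ee\subset K+C\ee B^n$ (from the uniform bound on $g_\ee$) together with the Steiner formula gives $V(K_\ee)=O(\ee^2)$, as required; but if $\dim K=n-1$, so that $S(K,\cdot)$ is a pair of atoms at $\pm u_0$, showing that $V(K_\ee)/\ee\to V_{n-1}(K)\bigl(F(u_0)+F(-u_0)\bigr)$ requires a separate Fubini-type computation that you have not supplied. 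Since the only use of the lemma in the paper is for $K\in\K^n_{oo}$, this is a peripheral defect, but as a proof of the lemma as stated it is incomplete; the cleanest fix is simply to adopt the paper's telescoping decomposition, which treats all cases uniformly.
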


\begin{proof}
It would be possible to apply the argument of \cite[Theorem~1.1]{L1} (cf.~also the argument in \cite[Lemma  6.5.3]{Sch93}, which is attributed to Aleksandrov), but for the reader familiar with mixed volumes and mixed surface area measures, we provide an alternative proof which avoids the use of Minkowski's inequality.  Our notation follows that of \cite{Sch93}.  For brevity, we temporarily write $K_{\ee}=K+_{\varphi,\ee}L$.  Starting with the decomposition
$$
\frac{V(K_{\ee})-V(K)}{\ee}=\sum_{i=0}^{n-1}\frac{
V(K_\ee[i+1],K[n-1-i])-V(K_\ee[i],K[n-i])}{\ee},
$$
it is clearly sufficient to show that each of the $n$ summands converges to $I/n$ as $\ee\rightarrow 0+$, where $I$ is the integral on the right-hand side of (\ref{intrepr}). For this, observe that
\begin{eqnarray}
\lefteqn{\frac{V(K_\ee[i+1],K[n-1-i])-V(K_\ee[i],K[n-i])}{\ee}}\nonumber\\
&=&\frac{1}{n}\int_{S^{n-1}}\frac{h_{K_\ee}(u)-h_K(u)}{\ee}\, dS(K_\ee[i],K[n-1-i],u)\nonumber\\
&=&\frac{1}{n}\int_{S^{n-1}}\left(\frac{h_{K_\ee}(u)-h_K(u)}{\ee}
-F_{K,L,\varphi_1, \varphi_2}(u)\right)\, dS(K_\ee[i],K[n-1-i],u)\label{xyz} \\
& &\qquad \, + \frac{1}{n}\int_{S^{n-1}} F_{K,L,\varphi_1, \varphi_2}(u) \, dS(K_\ee[i],K[n-1-i],u).\nonumber
\end{eqnarray}
By assumption, the integrand in (\ref{xyz}) converges uniformly to zero for $u\in S^{n-1}$.  Since $K_\ee\to K$ as $\ee\to 0+$, by Lemma~\ref{newconvlem}, and the mixed surface area measures $S(K_\ee[i],K[n-1-i],\cdot)$ are uniformly bounded for $\ee\in (0,1]$, the first integral in the previous sum converges to zero.  Noting that $S(K_\ee[i],K[n-1-i],\cdot)\to S(K,\cdot)$ weakly as $\ee\to 0+$, we see that the second integral converges to $I/n$, as required.
\end{proof}

\begin{lem}\label{limiteqns} Let $K\in {\mathcal{K}}_{oo}^n$ and $L\in {\mathcal{K}}_{o}^n$.  Then
\begin{equation}\label{newlimits}
\lim_{\ee\rightarrow 0+}\frac{h_{K+_{\varphi,\ee}L}(u)-h_K(u)}{\ee}=\frac{h_K(u)}{(\varphi_1)'_l(1)}
\varphi_2\left(\frac{h_L(u)}{h_K(u)}\right),
\end{equation}
uniformly for $u\in S^{n-1}$.
\end{lem}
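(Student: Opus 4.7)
The plan is to work directly with the implicit equation \eqref{OrldefGG} (with $\alpha=1$, $\beta=\ee$) at a fixed $u\in S^{n-1}$, then promote pointwise convergence to uniform convergence using compactness of $S^{n-1}$ and the hypothesis $K\in\K^n_{oo}$.

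Write $h=h_K(u)$, $\ell=h_L(u)$, and $h_\ee=h_{K+_{\varphi,\ee}L}(u)$. Since $K\in\K^n_{oo}$ there is a constant $c>0$ with $h_K\ge c$ on $S^{n-1}$, and Lemma~\ref{newconvlem} gives $h_\ee\to h$ uniformly. Because $\varphi_2\ge 0$ and $\varphi_1$ is increasing with $\varphi_1(1)=1$, the defining equation forces $h_\ee\ge h$, so the ratio $t_\ee:=h/h_\ee$ lies in $(0,1]$ and $t_\ee\to 1$ from below, uniformly in $u$. Rewriting \eqref{OrldefGG} as
$$
\frac{1-\varphi_1(h/h_\ee)}{\ee}=\varphi_2\!\left(\frac{\ell}{h_\ee}\right),
$$
the right-hand side converges to $\varphi_2(\ell/h)$ uniformly in $u$ (using $h_\ee\to h$ uniformly, $h\ge c$, and continuity of $\varphi_2$ on compact subsets of $[0,\infty)$).

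For the left-hand side, the key step is the convexity of $\varphi_1$: the difference quotient $(1-\varphi_1(t))/(1-t)$ is a nondecreasing function of $t\in[\tau,1)$ that tends to $(\varphi_1)'_l(1)$ as $t\to 1^-$. Combined with $t_\ee\to 1^-$ uniformly, this gives
$$
\frac{1-\varphi_1(h/h_\ee)}{1-h/h_\ee}\longrightarrow (\varphi_1)'_l(1)
$$
uniformly in $u\in S^{n-1}$. Solving for $(1-h/h_\ee)/\ee$ and multiplying by $h_\ee$ yields
$$
\frac{h_\ee-h}{\ee}=h_\ee\cdot\frac{1-h/h_\ee}{\ee}\longrightarrow
\frac{h}{(\varphi_1)'_l(1)}\,\varphi_2\!\left(\frac{\ell}{h}\right),
$$
uniformly for $u\in S^{n-1}$, which is \eqref{newlimits}.

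The main obstacle is the uniform control of the convex difference quotient for $\varphi_1$ near $1$: one has to argue that the monotonicity of $(1-\varphi_1(t))/(1-t)$ in $t$, combined with the uniform convergence $t_\ee\to 1^-$, really delivers uniform convergence of the quotient to $(\varphi_1)'_l(1)$ (and that this limit is strictly positive, which follows from $\varphi_1(1)=1>0=\varphi_1(\tau(\varphi_1))$ together with convexity, so that division by $(\varphi_1)'_l(1)$ is legitimate). Everything else is bookkeeping: the lower bound $h_K\ge c>0$ prevents the arguments of $\varphi_2$ from blowing up, and the uniform Hausdorff convergence from Lemma~\ref{newconvlem} takes care of the rest.
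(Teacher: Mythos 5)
Your argument is correct and follows essentially the same route as the paper: both rewrite the implicit equation \eqref{OrldefGG} so that the difference quotient reduces to $\varphi_2(h_L(u)/h_{K+_{\varphi,\ee}L}(u))$ times the chord-slope quotient of $\varphi_1$ at $1$, and both identify the latter's limit as $1/(\varphi_1)'_l(1)$ (the paper via the substitution $z=\widehat{\varphi_1}^{-1}(1-\ee\varphi_2(\cdots))$, you via the monotone convergence of convex difference quotients, which also makes the uniformity cleaner). The one point to repair is the degenerate set $\{u: h_L(u)\le \tau(\varphi_2)\,h_K(u)\}$, where $h_{K+_{\varphi,\ee}L}(u)=h_K(u)$ identically, so $t_\ee=1$ and your division by $1-t_\ee$ is illegitimate; this case must be split off (as the paper does first), and there both sides of \eqref{newlimits} vanish, so the uniform statement survives.
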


\begin{proof}
Let $\ee>0$, let $K\in {\mathcal{K}}_{oo}^n$ and $L\in {\mathcal{K}}_{o}^n$, and let $u\in S^{n-1}$.

If $h_L(u)/h_K(u)\le \tau=\tau(\varphi_2)$, then by considerations similar to those in Remark~\ref{Remark4.2}, we see that \eqref{OrldefGG} with $\alpha_1=1$ and $\alpha_2=\ee$ implies that $h_{K+_{\varphi,\ee}L}(u)=h_K(u)$
for all $\ee\ge 0$. Thus, in this case, the assertion of the lemma holds. If $h_L(u)/h_K(u)> \tau$, then
$h_L(u)/h_{K+_{\varphi,\ee}L}(u)>\tau$ for $\ee>0$ sufficiently small. Moreover,
we can assume that $h_K(u)/h_{K+_{\varphi,\ee}L}(u)>\tau$ by choosing $\ee>0$ sufficiently small.
In the following, we need only consider this case.
Then, by \eqref{OrldefGG} with $\alpha_1=1$ and $\alpha_2=\ee$, we have
\begin{equation}\label{ddnn}
\frac{h_{K+_{\varphi,\ee}L}(u)-h_K(u)}{h_{K+_{\varphi,\ee}L}(u)}= 1-\frac{h_K(u)}{h_{K+_{\varphi,\ee}L}(u)}= 1-\widehat{\varphi_1}^{-1}\left(1-\ee\varphi_2\left(\frac{ h_L(u)}{
h_{K+_{\varphi,\ee}L}(u)}\right)\right).
\end{equation}
Hence, using (\ref{ddnn}) and Lemma~\ref{newconvlem}, we obtain
\begin{eqnarray}\label{nww1}
\lefteqn{\lim_{\ee\rightarrow 0+}\frac{h_{K+_{\varphi,\ee}L}(u)-h_K(u)}{\ee}}\nonumber\\
& = &
\lim_{\ee\rightarrow 0+}\frac{h_{K+_{\varphi,\ee} L}(u)}{\ee}
\left(1-\widehat{\varphi_1}^{-1}\left(1-\ee\varphi_2\left(\frac{h_L(u)}{
h_{K+_{\varphi,\ee}L}(u)}\right)\right)\right)\nonumber\\
& = &\lim_{\ee\rightarrow 0+}h_{K+_{\varphi,\ee}L}(u)\varphi_2\left(\frac{ h_L(u)}{
h_{K+_{\varphi,\ee}L}(u)}\right)
\left(\frac{1-\widehat{\varphi_1}^{-1}\left(1-\ee\varphi_2\left(\frac{h_L(u)}{
h_{K+_{\varphi,\ee}L}(u)}\right)\right)}{
1-\left(1-\ee\varphi_2\left(\frac{h_L(u)}{
h_{K+_{\varphi,\ee}L}(u)}\right)\right)}\right)\nonumber\\
& = &h_{K}(u)\varphi_2\left(\frac{ h_L(u)}{
h_{K}(u)}\right)\lim_{\ee\rightarrow 0+}
\left(\frac{1-\widehat{\varphi_1}^{-1}\left(1-\ee\varphi_2\left(\frac{h_L(u)}{
h_{K+_{\varphi,\ee}L}(u)}\right)\right)}{
1-\left(1-\ee\varphi_2\left(\frac{h_L(u)}{
h_{K+_{\varphi,\ee}L}(u)}\right)\right)}\right).
\end{eqnarray}
Let
\begin{equation}\label{zzzz}
z=\widehat{\varphi_1}^{-1}\left(1-\ee\varphi_2\left(\frac{h_L(u)}{
h_{K+_{\varphi,\ee}L}(u)}\right)\right)
\end{equation}
and note that $z\rightarrow 1-$ as $\ee\rightarrow 0+$.  Consequently,
\begin{equation}\label{nww2}
\lim_{\ee\rightarrow 0+}
\left(\frac{1-\widehat{\varphi_1}^{-1}\left(1-\ee\varphi_2\left(\frac{h_L(u)}{
h_{K+_{\varphi,\ee}L}(u)}\right)\right)}{
1-\left(1-\ee\varphi_2\left(\frac{h_L(u)}{
h_{K+_{\varphi,\ee}L}(u)}\right)\right)}\right)= \lim_{z\rightarrow 1-}
\frac{1-z}{\varphi_1(1)-\varphi_1(z)}=\frac{1}{(\varphi_1)'_l(1)}.
\end{equation}
Now the pointwise limit (\ref{newlimits}) follows immediately from (\ref{nww1}) and (\ref{nww2}).

Moreover, the convergence is uniform for $u\in S^{n-1}$.  Indeed, by (\ref{nww1}), (\ref{zzzz}), and (\ref{nww2}), it suffices to recall that by Lemma~\ref{newconvlem},
$$\lim_{\ee\rightarrow 0+}h_{K+_{\varphi,\ee}L}(u)=h_K(u),$$
uniformly for $u\in S^{n-1}$.
\end{proof}

For $\varphi\in \Phi$, define the Orlicz mixed volume $V_{\varphi}(K,L)$ by
\begin{equation}\label{intreprLp}
V_{\varphi}(K,L)=\frac{1}{n}\int_{S^{n-1}}
\varphi\left(\frac{h_L(u)}{h_K(u)}\right)h_K(u)\,dS(K,u),
\end{equation}
for all $K\in {\mathcal{K}}_{oo}^n$ and $L\in {\mathcal{K}}_{o}^n$.  The following result links the Orlicz mixed volume to the first variation of volume with respect to Orlicz addition and scalar multiplication.

\begin{thm}\label{OrlMV}
For all $K\in {\mathcal{K}}_{oo}^n$ and $L\in {\mathcal{K}}_{o}^n$, we have
\begin{equation}\label{OrlMVdefff}
V_{\varphi_2}(K,L)=\frac{(\varphi_1)'_l(1)}{n}\lim_{\ee\rightarrow 0+}\frac{V(K+_{\varphi,\ee}L)-V(K)}{\ee}.
\end{equation}
\end{thm}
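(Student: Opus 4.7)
The proof I would give is essentially a one–shot combination of the two preparatory lemmas that have just been proved. Lemma~\ref{limiteqns} supplies the uniform limit of the support–function difference quotient $(h_{K+_{\varphi,\ee}L}(u)-h_K(u))/\ee$, and Lemma~\ref{Lutnew} converts such uniform convergence into convergence of the volume difference quotient via an integral against $S(K,\cdot)$. The right-hand side of Lemma~\ref{limiteqns}, divided appropriately, is precisely the integrand appearing in the definition~(\ref{intreprLp}) of $V_{\varphi_2}(K,L)$, so no additional computation is required.

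More concretely, I would define
$$
F(u)\;=\;\frac{h_K(u)}{(\varphi_1)'_l(1)}\,\varphi_2\!\left(\frac{h_L(u)}{h_K(u)}\right),\qquad u\in S^{n-1}.
$$
Since $K\in\mathcal{K}^n_{oo}$, the support function $h_K$ is continuous and strictly positive on $S^{n-1}$, and both $h_L$ and $\varphi_2$ are continuous, so $F$ is continuous (in particular, Borel measurable) and nonnegative. Lemma~\ref{limiteqns} then yields
$$
\lim_{\ee\to 0+}\frac{h_{K+_{\varphi,\ee}L}(u)-h_K(u)}{\ee}\;=\;F(u),
$$
uniformly for $u\in S^{n-1}$, which is exactly the hypothesis of Lemma~\ref{Lutnew}. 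Applying that lemma gives
$$
\lim_{\ee\to 0+}\frac{V(K+_{\varphi,\ee}L)-V(K)}{\ee}\;=\;\int_{S^{n-1}}F(u)\,dS(K,u),
$$
and multiplying both sides by $(\varphi_1)'_l(1)/n$ converts the integral on the right into the expression for $V_{\varphi_2}(K,L)$ in~(\ref{intreprLp}), establishing~(\ref{OrlMVdefff}).

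The only hypothesis that needs a small sanity check is that the constant $(\varphi_1)'_l(1)$ by which we are dividing is finite and strictly positive, so that $F$ is well defined. Finiteness is automatic, because $\varphi_1$ is convex on $[0,\infty)$, so its left derivative at $1$ is bounded above by any forward secant slope (for instance, by $\varphi_1(2)-\varphi_1(1)$). Positivity follows from convexity together with $\varphi_1(0)=0$ and $\varphi_1(1)=1$, which give $(\varphi_1)'_l(1)\ge (\varphi_1(1)-\varphi_1(0))/(1-0)=1$. I do not anticipate any genuine obstacle: all the analytic content—the uniform convergence of the difference quotients, the interchange of limit and integral, and the asymptotic behaviour of $\widehat{\varphi_1}^{-1}$ near $1$—has been isolated in Lemmas~\ref{newconvlem}, \ref{Lutnew}, and~\ref{limiteqns}, and the remainder is bookkeeping.
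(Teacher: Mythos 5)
Your proposal is correct and coincides with the paper's proof, which likewise deduces Theorem~\ref{OrlMV} immediately by feeding the uniform limit from Lemma~\ref{limiteqns} into Lemma~\ref{Lutnew} and matching the resulting integral with the definition \eqref{intreprLp} of $V_{\varphi_2}(K,L)$. The added checks that $(\varphi_1)'_l(1)$ is finite and at least $1$ are a harmless (and valid) supplement.
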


\begin{proof}
This follows immediately from Lemmas~\ref{Lutnew} and~\ref{limiteqns}.
\end{proof}

When $\varphi_1(t)=\varphi_2(t)=t^p$, $p \ge 1$, (\ref{intreprLp}) and (\ref{OrlMVdefff}) reduce to their $L_p$ versions in (\ref{Lpvariation}).

\section{An Orlicz-Minkowski inequality}\label{OMInequality}

Most of the work in this section is done by the following lemma.

\begin{lem}\label{Dan}
Let $0<a\le \infty$ be an extended real number, and let $I=[0,a)$ be a possibly infinite interval. Suppose that $\varphi : I\to [0,\infty )$ is convex with $\varphi (0)=0$. If $K\in {\mathcal{K}}_{oo}^n$ and $L\in {\mathcal{K}}_{o}^n$ are such that $L\subset \inte(aK)$, then
\begin{equation}\label{Jen}
\frac{1}{nV(K)}\int_{S^{n-1}}\varphi\left(\frac{h_L(u)}{h_K(u)}\right)h_K(u)\,
dS(K,u)\ge \varphi\left(\left(\frac{V(L)}{V(K)}\right)^{1/n}\right).
\end{equation}
If $\varphi$ is strictly convex, equality holds if and only if $K$ and $L$ are dilatates or $L=\{o\}$.
\end{lem}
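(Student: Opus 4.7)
The plan is to combine Jensen's inequality against the normalized cone measure with Minkowski's first inequality and the monotonicity of $\varphi$.

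First, I would observe that the hypothesis $L\subset\inte(aK)$ guarantees the integrand makes sense: by compactness of $L$ and openness of $aK$, there is $\delta>0$ with $L+\delta B^n\subset aK$, hence $h_L(u)/h_K(u)\le a-\delta/h_K(u)$, which is uniformly bounded away from $a$ on $S^{n-1}$. In particular $h_L(u)/h_K(u)\in I$ for every $u$. Next, by \eqref{normconem} and the identity $V_1(K,K)=V(K)$ coming from \eqref{V1form}, the measure $d\overline{V}_n(K,\cdot)$ is a Borel probability measure on $S^{n-1}$, so the left-hand side of \eqref{Jen} is exactly
$$
\int_{S^{n-1}}\varphi\!\left(\frac{h_L(u)}{h_K(u)}\right)d\overline{V}_n(K,u).
$$

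Next I would apply Jensen's inequality \eqref{JenIneq} with this probability measure and the convex function $\varphi$, which gives
$$
\int_{S^{n-1}}\varphi\!\left(\frac{h_L(u)}{h_K(u)}\right)d\overline{V}_n(K,u)\ \ge\ \varphi\!\left(\int_{S^{n-1}}\frac{h_L(u)}{h_K(u)}\,d\overline{V}_n(K,u)\right)=\varphi\!\left(\frac{V_1(K,L)}{V(K)}\right),
$$
where the last equality uses \eqref{normconem} and \eqref{V1form}. To finish I need two ingredients: the monotonicity of $\varphi$ on $I$, which follows because any convex $\varphi\ge 0$ with $\varphi(0)=0$ satisfies $\varphi(x)\le (x/y)\varphi(y)\le\varphi(y)$ for $0\le x\le y$ (writing $x$ as a convex combination of $0$ and $y$); and Minkowski's first inequality \eqref{MinkIneq}, which gives $V_1(K,L)/V(K)\ge (V(L)/V(K))^{1/n}$. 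Combining yields \eqref{Jen}.

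For the equality clause under strict convexity, equality in Jensen's forces $h_L/h_K$ to be constant $\overline{V}_n(K,\cdot)$-a.e., and since this measure has support $S^{n-1}$ (or at least determines $L$ via its support function on the support of $S(K,\cdot)$), $L$ must be a dilatate of $K$ or $L=\{o\}$; equality in Minkowski's inequality forces $K$ and $L$ to be homothetic (they cannot lie in parallel hyperplanes since $K\in\K^n_{oo}$), and combined with $o\in L$ and strict monotonicity of $\varphi$ (which holds on the relevant range since $\varphi$ is strictly convex with $\varphi(0)=0$) this gives the stated conclusion. Conversely, if $L=tK$ for some $t\ge 0$ (with $t<a$), then $h_L/h_K\equiv t$ everywhere and both sides of \eqref{Jen} equal $\varphi(t)$.

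The main obstacle is the bookkeeping around the domain $I=[0,a)$: one must verify that $h_L/h_K$ stays inside $I$ so that $\varphi$ can be composed with it, and that the arguments appearing after Jensen and Minkowski also lie in $I$ — this is exactly where the hypothesis $L\subset\inte(aK)$ is consumed. Everything else is a routine chaining of Jensen, Minkowski, and monotonicity.
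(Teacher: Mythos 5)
Your derivation of the inequality \eqref{Jen} is correct and is essentially the paper's argument: Jensen's inequality \eqref{JenIneq} against the probability measure $\overline{V}_n(K,\cdot)$, then Minkowski's first inequality \eqref{MinkIneq} together with the monotonicity of $\varphi$ (which you rightly note must be extracted from convexity and $\varphi(0)=0$, since it is not a stated hypothesis). The bookkeeping showing that $h_L/h_K$, $V_1(K,L)/V(K)$, and $(V(L)/V(K))^{1/n}$ all lie in $I$ is also handled correctly.

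The gap is in the equality analysis. You assert that constancy of $h_L/h_K$ $\overline{V}_n(K,\cdot)$-almost everywhere already forces $L$ to be a dilatate of $K$, ``since this measure has support $S^{n-1}$ (or at least determines $L$ via its support function on the support of $S(K,\cdot)$)''. Neither claim is true in general: for a polytope $K\in{\mathcal{K}}_{oo}^n$ the support of $S(K,\cdot)$ is a finite set of facet normals, and knowing $h_L=ch_K$ only there does not determine $L$ (take $K=[-1,1]^2$ and $L=B^2$, which agree at $\pm e_1,\pm e_2$). Your fallback --- that homothety from the Minkowski equality condition together with $o\in L$ yields a dilatate --- also fails: a slightly translated copy $L=rK+x$ with $x\neq o$ can still contain the origin without being a dilatate of $K$. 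The missing step is to combine the two equality conditions quantitatively: Minkowski's equality gives $h_L(u)=rh_K(u)+x\cdot u$, and substituting this into the Jensen constancy condition gives $x\cdot u=c\,h_K(u)$ for $S(K,\cdot)$-almost all $u$; integrating against $S(K,\cdot)$ and using that the centroid of $S(K,\cdot)$ is at the origin forces $c=0$, hence $x\cdot u=0$ on the support of $S(K,\cdot)$, and since $K$ is full-dimensional that support is not contained in a great subsphere, so $x=o$. This is exactly how the paper closes the argument, and without it the equality characterization is not proved.
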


\begin{proof}
Note that if $L\subset\inte( aK)$, we have $h_L(u)/h_K(u)\in I$ for all $u\in S^{n-1}$.  Since the normalized cone measure $\overline{V}_n(K,\cdot)$ defined by (\ref{normconem}) is a probability measure on $S^{n-1}$, we can use Jensen's inequality (\ref{JenIneq}), Minkowski's first inequality (\ref{MinkIneq}), and the fact that $\varphi$ is increasing, to obtain
\begin{eqnarray*}
\frac{1}{nV(K)}\int_{S^{n-1}}\varphi\left(\frac{h_L(u)}{h_K(u)}\right)h_K(u)\,
dS(K,u)&\ge & \varphi\left(\frac{1}{nV(K)}
\int_{S^{n-1}}h_L(u)\,dS(K,u)\right)\\
&=&\varphi\left(\frac{V_1(K,L)}{V(K)}\right)\\
&\ge & \varphi\left(\left(\frac{V(L)}{V(K)}\right)^{1/n}\right).
\end{eqnarray*}
Suppose that equality holds in (\ref{Jen}) and $\varphi$ is strictly convex, so that $\varphi>0$ on $(0,a)$.  Then in view of the injectivity of $\varphi$, we have equality in Minkowski's first inequality, so there are $r\ge 0$ and $x\in \R^n$ such that $L=rK+x$ and hence
\begin{equation}\label{lksupp}
h_L(u)=rh_K(u)+x\cdot u,
\end{equation}
for all $u\in S^{n-1}$.  Since equality must hold in Jensen's inequality as well, when $\varphi$ is strictly convex we can conclude from the equality condition for Jensen's inequality that
$$
\frac{1}{nV(K)}\int_{S^{n-1}} \frac{h_L(u)}{h_K(u)}h_K(u)\,dS(K,u)=\frac{h_L(v)}{h_K(v)},
$$
for $S(K,\cdot)$-almost all $v\in S^{n-1}$. Substituting (\ref{lksupp}), we obtain
$$
\frac{1}{nV(K)}\int_{S^{n-1}} \left(r+\frac{x\cdot u}{h_K(u)}\right)h_K(u)\,dS(K,u)=r+\frac{x\cdot v}{h_K(v)},
$$
for $S(K,\cdot)$-almost all $v\in S^{n-1}$. From this and the fact that the centroid of $S(K,\cdot)$ is at the origin, we get
$$
0=x\cdot \left(\frac{1}{nV(K)}\int_{S^{n-1}}u\,dS(K,u)\right)=
\frac{1}{nV(K)}\int_{S^{n-1}}x\cdot u\,dS(K,u)=\frac{x\cdot v}{h_K(v)},
$$
that is, $x\cdot v=0$, for $S(K,\cdot)$-almost all $v\in S^{n-1}$. Since $K\in {\mathcal{K}}_{oo}^n$, the support of
$S(K,\cdot)$ is not contained in a great subsphere, which implies that $x=o$ and hence $L=rK$.
\end{proof}

When $a=\infty$ and $\varphi(t)=e^t-1$, for example, (\ref{Jen}) can be presented in the appealing form
$$\log\int_{S^{n-1}}\exp\left(\frac{h_L(u)}{h_K(u)}\right)\,d\overline{V}_n(K,u)\ge
\left(\frac{V(L)}{V(K)}\right)^{1/n}.$$
(Similarly, (\ref{Lpvariation22}) can be written as
$$\left(\int_{S^{n-1}}\left(\frac{h_L(u)}{h_K(u)}\right)^p\,d\overline{V}_n(K,u)
\right)^{1/p}\ge \left(\frac{V(L)}{V(K)}\right)^{1/n}.$$
The left-hand side is just the $p$th mean of the function $h_L/h_K$ with respect to $\overline{V}_n(K,\cdot)$; since $p$th means increase with $p$, Minkowski's first inequality (\ref{MinkIneq}) implies (\ref{Lpvariation22}) for all $p>1$.)

The following result provides an Orlicz-Minkowski inequality which in the special case when $\varphi(t)=t^p$, $p\ge 1$, reduces to the $L_p$-Minkowski inequality (\ref{Lpvariation22}).

\begin{thm}\label{NewMIthm}
Let $\varphi\in \Phi$.  If $K\in {\mathcal{K}}_{oo}^n$ and $L\in {\mathcal{K}}_{o}^n$, then
\begin{equation}\label{NewMI}
V_{\varphi}(K,L)\ge V(K)\varphi\left(\left(\frac{V(L)}{V(K)}\right)^{1/n}\right).
\end{equation}
If $\varphi$ is strictly convex, equality holds if and only if $K$ and $L$ are dilatates or $L=\{o\}$.
\end{thm}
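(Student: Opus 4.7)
The plan is to obtain Theorem~\ref{NewMIthm} as an essentially immediate consequence of Lemma~\ref{Dan}, with the bulk of the real work already performed inside that lemma. The key observation is that any $\varphi\in\Phi$ satisfies the hypotheses of Lemma~\ref{Dan} on the interval $I=[0,\infty)$, that is, with $a=\infty$: such a $\varphi$ is convex, nonnegative, and satisfies $\varphi(0)=0$. In this case the geometric hypothesis ``$L\subset\inte(aK)$'' of Lemma~\ref{Dan} collapses to $L\subset\R^n$, which is automatic. Since $K\in\mathcal K^n_{oo}$ ensures $h_K(u)>0$ for all $u\in S^{n-1}$, the integrand $\varphi(h_L(u)/h_K(u))h_K(u)$ is well defined on $S^{n-1}$.

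First I would apply Lemma~\ref{Dan} to obtain
\[
\frac{1}{nV(K)}\int_{S^{n-1}}\varphi\!\left(\frac{h_L(u)}{h_K(u)}\right)h_K(u)\,dS(K,u)\ge \varphi\!\left(\left(\frac{V(L)}{V(K)}\right)^{1/n}\right),
\]
and then multiply by $V(K)$ and invoke the definition \eqref{intreprLp} of $V_\varphi(K,L)$ to conclude \eqref{NewMI}.

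For the equality case, assume $\varphi$ is strictly convex. The sufficient direction is a direct calculation: if $L=\{o\}$, then $h_L\equiv 0$ and $V(L)=0$, so both sides of \eqref{NewMI} vanish because $\varphi(0)=0$; if $L=rK$ with $r\ge 0$, then $h_L=rh_K$ and $V(L)=r^nV(K)$, so both sides equal $V(K)\varphi(r)$, again using \eqref{intreprLp} and the fact that $\int_{S^{n-1}}h_K\,dS(K,\cdot)=nV(K)$. For the necessary direction, suppose equality holds in \eqref{NewMI}. Then equality holds in the inequality supplied by Lemma~\ref{Dan}, and the equality clause of that lemma forces either $L=\{o\}$ or $K$ and $L$ to be dilatates, which is exactly the claim.

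There is no real obstacle here; the substantive work (the Jensen step, the appeal to Minkowski's first inequality, and the analysis of equality using the fact that the centroid of $S(K,\cdot)$ is at the origin) has already been carried out in the proof of Lemma~\ref{Dan}. The only minor thing to verify is the trivial equality case $L=\{o\}$, which is separated out because Lemma~\ref{Dan} permits $L=\{o\}$ as an additional equality case.
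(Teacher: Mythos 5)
Your proposal is correct and follows exactly the paper's route: the paper's proof of Theorem~\ref{NewMIthm} consists precisely of invoking Lemma~\ref{Dan} with $a=\infty$ together with the definition \eqref{intreprLp}, which is what you do. Your additional verification of the sufficient direction of the equality case is a harmless elaboration of what the paper leaves implicit.
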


\begin{proof}
This follows immediately from (\ref{intreprLp}) and Lemma~\ref{Dan} with $a=\infty$.
\end{proof}

Inequality (\ref{NewMI}) and its equality condition implies the Orlicz-Brunn-Minkowski inequality (\ref{OBMnn}) when $\varphi(x_1,x_2)=\varphi_1(x_1)+\varphi_2(x_2)$ for $x_1,x_2\ge 0$ and some $\varphi_1,\varphi_2\in \Phi$, $K\in {\mathcal{K}}_{oo}^n$, and $L\in {\mathcal{K}}_{o}^n$, and its equality condition.  Indeed, by (\ref{Orldef}) and by (\ref{intreprLp}) and (\ref{NewMI}) with $K$, $L$, and $\varphi$ replaced by $K+_{{{\varphi}}}L$, $K$, and $\varphi_1$ (and by $K+_{{{\varphi}}}L$, $L$, and $\varphi_2$), respectively, we have
\begin{eqnarray*}
V(K+_{{{\varphi}}}L)
&=&\frac{1}{n}\int_{S^{n-1}}\left(\varphi_1\left(\frac{h_{K}(u)}
{h_{K+_{{{\varphi}}}L}(u)}\right)+\varphi_2\left(\frac{h_{L}(u)}
{h_{K+_{{{\varphi}}}L}(u)}\right)\right)h_{K+_{{{\varphi}}}L}(u)\,dS(K+_{{{\varphi}}}L,u)\\
&=&V_{\varphi_1}(K+_{{{\varphi}}}L,K)+V_{\varphi_2}(K+_{{{\varphi}}}L,L)\\
&\ge &V(K+_{{{\varphi}}}L)\left(
\varphi_1\left(\left(\frac{V(K)}{V(K+_{{{\varphi}}}L)}\right)^{1/n}\right)
+\varphi_2\left(\left(\frac{V(L)}{V(K+_{{{\varphi}}}L)}\right)^{1/n}\right)\right).
\end{eqnarray*}
This is just (\ref{OBMnn}). If  equality holds in (\ref{OBMnn}), then it also holds in the previous inequality and therefore in (\ref{NewMI}), with $K$, $L$, and $\varphi$ replaced by $K+_{{{\varphi}}}L$, $K$, and $\varphi_1$ (and by $K+_{{{\varphi}}}L$, $L$, and $\varphi_2$), respectively.  Thus if $\varphi$ is strictly convex, then $\varphi_1$ and $\varphi_2$ are also, so both $K$ and $L$ are multiples of $K+_{{{\varphi}}}L$, and hence are dilatates of each other or $L=\{o\}$.

\section{Other Orlicz operations and Orlicz inequalities}\label{Alternative}

Unless otherwise specified, it will be assumed throughout this section that $\varphi\in \Phi=\Phi_1$ and $\varphi>0$ on $(0,\infty)$.

Perhaps the most natural way to attempt to define an Orlicz addition is by setting
\begin{equation}\label{diffdef}
h_{K*_\varphi L}(x)={\varphi}^{-1}\left(\varphi\left(h_{K}(x)\right)+
\varphi\left(h_{L}(x)\right)
\right),
\end{equation}
for all $K,L\in{\mathcal K}^n_{o}$ and $x\in \R^n$.  As this operation is obviously associative, it may seem more appealing than the definition via (\ref{Orldef}). However, the following theorem shows that it yields nothing new beyond $L^p$ addition, and in fact (\ref{diffdef}) reduces to (\ref{Lpaddition}).

\begin{thm}\label{OrlDef2}
Equation (\ref{diffdef}) implies that $\varphi(t)=t^p$ for some $p\ge 1$.
\end{thm}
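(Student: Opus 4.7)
My plan is to exploit the fact that the right-hand side of \eqref{diffdef}, as a function of $x$, must be the support function of a compact convex set $K*_\varphi L$ in $\K^n_o$. In particular, it must be positively homogeneous of degree $1$ in $x$. Choosing $K = [o,ae_1]$ and $L = [o,be_1]$ with $a,b\ge 0$, we have $h_K(e_1) = a$, $h_L(e_1)=b$, and $h_K(re_1)=ra$, $h_L(re_1)=rb$ for $r\ge 0$, so the homogeneity forces the functional identity
\begin{equation}\label{planhom}
\varphi^{-1}\!\left(\varphi(ra)+\varphi(rb)\right) = r\,\varphi^{-1}\!\left(\varphi(a)+\varphi(b)\right),
\end{equation}
for all $a,b,r\ge 0$. (Note that $\varphi^{-1}$ makes sense on $[0,\infty)$ because $\varphi\in\Phi$ with $\varphi>0$ on $(0,\infty)$ is continuous and strictly increasing on $[\tau(\varphi),\infty)$; a separate brief check will handle the degenerate case $\tau(\varphi)>0$ by showing it forces a contradiction with \eqref{planhom} using $a=b$ near $\tau(\varphi)$.)

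The main step is to convert \eqref{planhom} into a multiplicativity statement. Introduce the substitution $u=\varphi(a)$, $v=\varphi(b)$ and, for each fixed $r>0$, define $G_r:[0,\infty)\to[0,\infty)$ by
\[
G_r(u) = \varphi\bigl(r\,\varphi^{-1}(u)\bigr).
\]
Applying $\varphi$ to both sides of \eqref{planhom} rewrites it as $G_r(u)+G_r(v)=G_r(u+v)$, i.e., $G_r$ satisfies Cauchy's functional equation on $[0,\infty)$. Since $\varphi$ is convex (hence continuous) and $G_r(0)=0$, the standard continuous-solution argument gives $G_r(u)=\alpha(r)u$ for some $\alpha(r)\ge 0$. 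Undoing the substitution yields
\begin{equation}\label{planmult}
\varphi(ra) = \alpha(r)\,\varphi(a)\qquad\text{for all }a,r\ge 0,
\end{equation}
and setting $a=1$ (using $\varphi(1)=1$) identifies $\alpha(r)=\varphi(r)$, so $\varphi$ is multiplicative: $\varphi(ra)=\varphi(r)\varphi(a)$.

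From here the conclusion is immediate. A continuous multiplicative function $\varphi:[0,\infty)\to[0,\infty)$ with $\varphi(1)=1$ and $\varphi>0$ on $(0,\infty)$ is $\varphi(t)=t^p$ for some $p\in\R$ (by passing to $t\mapsto\log\varphi(e^t)$, which is additive and continuous, hence linear). The normalization $\varphi(0)=0$ forces $p>0$, and the convexity of $\varphi$ on $[0,\infty)$ forces $p\ge 1$.

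The one point demanding a little care---and the only potential obstacle---is justifying that we really may isolate \eqref{planhom} from the original vector-valued constraint: we need the pair $(h_K(x),h_L(x))$ to range over all of $[0,\infty)^2$ as $K,L\in\K^n_o$ vary and $x\in\R^n\setminus\{o\}$ is chosen, and then that the same scaling of $x$ by $r$ multiplies both entries by $r$. The segment choice above does this simultaneously for any prescribed $a,b\ge 0$. Once \eqref{planhom} is in hand, the Cauchy-equation step is the heart of the argument and everything else is routine.
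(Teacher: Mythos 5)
Your proof is correct, but it takes a genuinely different route from the paper's. The paper shows that $*_\varphi$ is projection covariant, associative, and has the identity property, and then invokes the classification theorem \cite[Theorem~7.9]{GHW} to conclude that $*_\varphi$ must already be $L_p$ addition for some $1\le p\le\infty$; it then recovers $\varphi$ by testing (\ref{diffdef}) on balls $rB^n$ and $sB^n$, which yields $\varphi^{-1}(\varphi(r)+\varphi(s))=(r^p+s^p)^{1/p}$ and hence Cauchy's equation for $g(t)=\varphi(t^{1/p})$, with a separate contradiction disposing of $p=\infty$. You bypass the classification theorem entirely: you use only the positive homogeneity of the support function, tested on the segments $[o,ae_1]$ and $[o,be_1]$, to extract the scalar identity \eqref{planhom}, and then observe that $G_r(u)=\varphi(r\varphi^{-1}(u))$ satisfies Cauchy's equation, which gives multiplicativity of $\varphi$ and hence $\varphi(t)=t^p$ with $p\ge 1$ forced by convexity. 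Both arguments bottom out in the continuous Cauchy equation, but yours is more elementary and self-contained (it needs neither associativity nor subadditivity, only homogeneity, and avoids the $p=\infty$ case split), whereas the paper's is shorter given that \cite[Theorem~7.9]{GHW} is already a central tool there and ties the result to the paper's theme that associativity characterizes $L_p$ addition. Two small remarks: the ``degenerate case $\tau(\varphi)>0$'' you promise to handle does not arise, since the standing assumption of that section is $\varphi>0$ on $(0,\infty)$; and the invertibility and surjectivity of $\varphi$ on $[0,\infty)$, which you use to define $G_r$ on all of $[0,\infty)$, do follow from $\varphi$ being convex with $\varphi(0)=0$, $\varphi(1)=1$, and $\varphi>0$ on $(0,\infty)$, so that step is sound.
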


\begin{proof}
Using (\ref{hproj}), it is easy to show that the operation $*_{\varphi}$ defined by (\ref{diffdef}) is projection covariant.  Since it is clearly also associative and has the identity property, it follows from \cite[Theorem~7.9]{GHW} that as an operation between $o$-symmetric compact convex sets, it is $L_p$ addition for some $1\le p\le \infty$.  If $p<\infty$, we then deduce from (\ref{diffdef}) with $K=rB^n$ and $L=sB^n$ for some $r,s\ge 0$ that
\begin{equation}\label{prs}
\left(r^p+s^p\right)^{1/p}=h_{rB^n+_p sB^n}(u)=h_{rB^n*_\varphi sB^n}(u)=
{\varphi}^{-1}\left(\varphi(r)+\varphi(s)\right),
\end{equation}
for all $u\in S^{n-1}$.  Setting $g(t)=\varphi\left(t^{1/p}\right)$ for $t\ge 0$, (\ref{prs}) becomes
$$g\left(r^p+s^p\right)=g(r^p)+g(s^p).$$
Thus $g(x+y)=g(x)+g(y)$ for all $x,y\ge 0$ and $g$ satisfies Cauchy's functional equation on $[0,\infty)$.  Since $g$ is continuous, nonnegative, and not identically zero, we have $g(t)=ct$ for some $c>0$. (See, for example, \cite[Theorem~1, p.~34]{Acz66}.)  Since $\varphi(1)=1$, this yields $\varphi(t)=t^p$.

If $p=\infty$, then instead of (\ref{prs}), we get
$$\max\{r,s\}=h_{rB^n+_{\infty} sB^n}(u)=h_{rB^n*_\varphi sB^n}(u)={\varphi}^{-1}\left(\varphi(r)+\varphi(s)\right),$$
for all $u\in S^{n-1}$.  This implies that for all $r\ge s\ge 0$, $\varphi(r)=\varphi(r)+\varphi(s)$, so $\varphi\equiv 0$, a contradiction.
\end{proof}

There is also a more immediate generalization of the $L_p$ scalar multiplication than that introduced in Section~\ref{OMImv}, namely via the equation $\alpha\cdot_{\varphi} K={\varphi}^{-1}(\alpha)K$, for all $\alpha\ge 0$ and $K\in{\mathcal K}^n_{o}$.  However, the following result shows that this also does not lead to anything beyond the $L_p$ case, even when combined with the Orlicz addition defined in Section~\ref{Orlicz}.

\begin{thm}\label{OrlMVdeadend}
Let $\varphi\in \Phi$ satisfy $\varphi>0$ on $(0,\infty)$. The limit
$$\lim_{\ee\rightarrow 0+}\frac{V(K+_\varphi \varphi^{-1}(\ee) L)-V(K)}{\ee}$$
exists for all $K,L\in {\mathcal{K}}_{oo}^n$ only if the function $f_{\varphi}$ defined by \begin{equation}\label{flim}
f_{\varphi}(t)=\lim_{\ee\rightarrow 0+}
\frac{\varphi(\ee t)}{\varphi(\ee)}
\end{equation}
is a real-valued function on $[0,\infty)$.  In this case, $f_{\varphi}(t)=t^p$ for some $p=p(\varphi)\ge 1$, and for this $p$, we have
$$
\lim_{\ee\rightarrow 0+}\frac{V(K+_\varphi \varphi^{-1}(\ee) L)-V(K)}{\ee}=\frac{1}{\varphi'_l(1)}\int_{S^{n-1}}
h_L(u)^ph_K(u)^{1-p}\,dS(K,u),
$$
for all $K\in {\mathcal{K}}_{oo}^n$ and $L\in {\mathcal{K}}_{o}^n$.
\end{thm}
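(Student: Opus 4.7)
The plan is to compute the pointwise limit $F(u) = \lim_{\ee \to 0+}[h_{K_\ee}(u) - h_K(u)]/\ee$, where $K_\ee = K +_\varphi \varphi^{-1}(\ee) L$, and then feed it into an adapted version of Lemma~\ref{Lutnew} (whose proof uses only uniform convergence $K_\ee \to K$ and uniform boundedness of mixed surface area measures, both of which carry over). Fix $u \in S^{n-1}$ and write $a = h_K(u) > 0$, $b = h_L(u) \ge 0$, $\delta_\ee = \varphi^{-1}(\ee) \to 0+$, and $\lambda_\ee = h_{K_\ee}(u)$. By \eqref{Orldef}, $\varphi(a/\lambda_\ee) + \varphi(\delta_\ee b/\lambda_\ee) = 1$; monotonicity of $+_\varphi$ together with the identity property (Theorem~\ref{Orthm1}) gives $\lambda_\ee \ge a$, and an argument parallel to Lemma~\ref{newconvlem} yields $\lambda_\ee \to a$ uniformly in $u$. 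A left Taylor expansion of $\varphi$ at $1$ (with $\varphi'_l(1) \in (0,\infty)$ finite by convexity) then gives
\[
\varphi\!\left(\frac{\delta_\ee b}{\lambda_\ee}\right) = \varphi'_l(1)\,\frac{\lambda_\ee - a}{\lambda_\ee} + o\!\left(\frac{\lambda_\ee - a}{\lambda_\ee}\right).
\]

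To force $f_\varphi$ to be real-valued, I would specialize to $K = B^n$ and $L = tB^n$ for arbitrary $t > 0$, so that $K_\ee = c_\ee B^n$ with $c_\ee > 1$ and $\varphi(1/c_\ee) + \varphi(\delta_\ee t/c_\ee) = 1$. Existence of $\lim \kappa_n(c_\ee^n - 1)/\ee$ forces $(c_\ee - 1)/\ee \to \alpha = \alpha(t) \in [0,\infty)$, and the display above then gives $\varphi(\delta_\ee t/c_\ee)/\ee \to \varphi'_l(1)\alpha$. Since $\varphi(\delta_\ee) = \ee$ and $c_\ee \to 1$, the mean-value estimate
\[
0 \le \varphi(\delta_\ee t) - \varphi(\delta_\ee t/c_\ee) \le \varphi'_r(\delta_\ee t)\,\delta_\ee t\,(c_\ee - 1)/c_\ee,
\]
together with boundedness of $\varphi'_r(\delta_\ee t)$ for small $\ee$ (since $\varphi'_r$ is non-decreasing and finite-valued), yields $[\varphi(\delta_\ee t) - \varphi(\delta_\ee t/c_\ee)]/\ee \to 0$. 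Hence $\varphi(\delta_\ee t)/\varphi(\delta_\ee) \to \varphi'_l(1)\alpha$, so $f_\varphi(t) \in [0,\infty)$. Since $t > 0$ was arbitrary, $f_\varphi$ is real-valued on $[0,\infty)$.

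Under this assumption, $f_\varphi$ is multiplicative, since
\[
f_\varphi(st) = \lim_{\eta \to 0+}\frac{\varphi(\eta s t)}{\varphi(\eta s)}\cdot\frac{\varphi(\eta s)}{\varphi(\eta)} = f_\varphi(t)\,f_\varphi(s),
\]
and monotone increasing (inherited from $\varphi$); a reduction to Cauchy's equation via $u \mapsto \log f_\varphi(e^u)$ then forces $f_\varphi(t) = t^p$ for some $p \in \R$. Convexity of $\varphi$ with $\varphi(0)=0$ makes $\varphi(x)/x$ non-decreasing, so $\varphi(\eta t)/\varphi(\eta) \ge t$ for $t \ge 1$ and $p \ge 1$. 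With $f_\varphi$ continuous, the pointwise convergence $\varphi(\delta_\ee b/\lambda_\ee)/\varphi(\delta_\ee) \to (b/a)^p$ is uniform on $S^{n-1}$ (using uniform positivity of $h_K$ and the uniformity of $\lambda_\ee \to a$), and the first display rearranges to
\[
\lim_{\ee \to 0+}\frac{\lambda_\ee - a}{\ee} = \frac{h_L(u)^p\,h_K(u)^{1-p}}{\varphi'_l(1)}
\]
uniformly in $u$. The adapted Lemma~\ref{Lutnew} then delivers the stated integral formula. The main obstacle is the squeeze in the middle paragraph; it hinges on $\varphi'_r(\delta_\ee t)$ being bounded as $\ee \to 0+$, which is a free consequence of convexity and finite-valuedness of $\varphi$.
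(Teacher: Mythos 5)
Your argument is correct and follows essentially the route the paper itself indicates (the paper omits the proof and only sketches it): one derives the support-function limit
$\lim_{\ee\to 0+}\bigl(h_{K+_\varphi \varphi^{-1}(\ee)L}(u)-h_K(u)\bigr)/\ee=\tfrac{h_K(u)}{\varphi'_l(1)}\lim_{\ee\to 0+}\varphi\bigl(\ee h_L(u)/h_K(u)\bigr)/\varphi(\ee)$
by the methods of Section~\ref{OMImv}, forces $f_\varphi$ to be real-valued by testing on balls, identifies $f_\varphi(t)=t^p$ with $p\ge 1$ via the multiplicative Cauchy equation, and feeds the uniform limit into the adapted Lemma~\ref{Lutnew}. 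Your squeeze replacing $\delta_\ee t/c_\ee$ by $\delta_\ee t$ is sound, since $\varphi'_r$ is locally bounded by convexity and $\varphi^{-1}$ reparametrizes a full neighborhood of $0$, so the limit along $\delta_\ee=\varphi^{-1}(\ee)$ coincides with the limit in \eqref{flim}.
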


We omit the proof of this theorem, which can be obtained by the methods of Section~\ref{OMImv}.  It turns out that instead of the right-hand side of (\ref{newlimits}), one arrives at
$$
\lim_{\ee\rightarrow 0+}\frac{h_{K+_\varphi \varphi^{-1}(\ee) L}(u)-h_K(u)}{\ee}=\frac{h_K(u)}{\varphi'_l(1)}\lim_{\ee\rightarrow 0+}
\frac{\varphi\left(\ee\frac{h_L(u)}{h_K(u)}\right)}{\varphi(\ee)},
$$
for $u\in S^{n-1}$, when either limit exists.  The crucial observation is then that
if (\ref{flim}) defines a real-valued function $f_{\varphi}$ on $[0,\infty)$ (i.e., the limit in \eqref{flim} exists and is finite for all $t\ge 0$), then $f_{\varphi}$ satisfies the multiplicative Cauchy functional equation $f_{\varphi}(st)=f_{\varphi}(s)f_{\varphi}(t)$ for $s,t\ge 0$.  From this one concludes that $f_{\varphi}(t)=t^p$ for some $p=p(\varphi)\ge 1$.

There are functions $\varphi\in \Phi$ such that $\varphi>0$ on $(0,\infty)$ for which $f_{\varphi}$ in (\ref{flim}) is not a real-valued function on $[0,\infty)$.  To see this, note that the function $e^{4(1-t^{-2})}$ tends to zero as $t\rightarrow 0$, is convex and strictly increasing on $[0,1]$, and has slope 8 at $t=1$.  Therefore the function
$$
\varphi(t)=\begin{cases}
0,& {\text{ if $t=0$,}}\\
e^{4(1-t^{-2})},& {\text{ if $0<t\le 1$},}\\
8t-7,& {\text{ if $t\ge 1$,}}
\end{cases}
$$
satisfies the conditions we are assuming.  Then $f_{\varphi}(0)=0$ and
$$
f_{\varphi}(t)=\lim_{\ee\rightarrow 0+}\frac{e^{4(1-(\ee t)^{-2})}}{e^{4(1-\ee^{-2})}}=\lim_{\ee\rightarrow 0+}e^{4\ee^{-2}(1-t^{-2})}=\begin{cases}
0,& {\text{ if $0<t<1$,}}\\
1,& {\text{ if $t=1$},}\\
\infty,& {\text{ if $t>1$.}}
\end{cases}
$$
Thus in this case, Theorem~\ref{OrlMVdeadend} does not apply.

In view of Theorem~\ref{OrlDef2}, if $\varphi(t)\neq t^p$, then there are $K$ and $L$ such that (\ref{diffdef}) does not define a support function.  This obstacle can be avoided by defining, for general $\varphi\in \Phi$,
$$
K\widehat{+}_{\varphi}L=\bigcap_{u\in S^{n-1}}\left\{x\in \R^n: x\cdot u\le
\widehat{\varphi}^{-1}\left(\varphi\left(h_{K}(u)\right)+\varphi\left(h_{L}(u)\right)
\right)\right\},
$$
for $K\in\K^n_{oo}$ and $L\in\K^n_o$.
Then $K\widehat{+}_\varphi L$ is the Wulff shape (also sometimes called the Aleksandrov body) of the function on the right-hand side of (\ref{diffdef}); see, for example, \cite[Section~6]{Gar02}.

If we also define a scalar multiplication by setting
\begin{equation}\label{Wulphis}
K\widehat{+}_{\varphi}\,\ee\cdot L=\bigcap_{u\in S^{n-1}}\left\{x\in \R^n: x\cdot u\le
\widehat{\varphi}^{-1}\left(\varphi\left(h_{K}(u)\right)+\ee \varphi\left(h_{L}(u)\right)
\right)\right\},
\end{equation}
for  $K\in\K^n_{oo}$ and $L\in\K^n_o$,
then we can compute a first variation of volume if also $\varphi>0$ on $(0,\infty)$. Using (\ref{Wulphis}) and \cite[Lemma~1]{HLYZ}, with $h_t(u)=\varphi^{-1}\left(\varphi\left(h_{K}(u)\right)+t \varphi\left(h_{L}(u)\right)
\right)$, for all $u\in S^{n-1}$, we obtain
$$
\lim_{\ee\rightarrow 0+}\frac{V(K\widehat{+}_\varphi \,\ee \cdot L)-V(K)}{\ee}=\int_{S^{n-1}}
\frac{\varphi\left(h_L(u)\right)}{\varphi_r'\left(h_K(u)\right)}\,dS(K,u).
$$
However, it is not clear what Minkowski-type inequality would be satisfied by this first variation of volume.

The definition (\ref{Orlprojsupp}) of the Orlicz projection body suggests defining, by analogy,
\begin{equation}\label{FV}
\widehat{V_{\varphi}}(K,L)=\inf\left\{\lambda>0:\int_{S^{n-1}}\varphi
\left(\frac{h_L(u)}{\lambda h_K(u)}\right)\,d\overline{V}_n(K,u)\le 1\right\},
\end{equation}
for $\varphi\in\Phi$, $K\in\K^n_{oo}$, and $L\in\K^n_o$, where $\overline{V}_n(K,\cdot)$ is the normalized cone measure for $K$ defined by (\ref{normconem}). Note however that if $\varphi(t)=t^p$, $p\ge 1$, we have $\widehat{V_{\varphi}}(K,L)=\left(V_p(K,L)/V(K)\right)^{1/p}$ (instead of $V_p(K,L)$).

\begin{thm}\label{FVnew}
Let $\varphi\in \Phi$ and let $K\in {\mathcal{K}}_{oo}^n$ and $L\in {\mathcal{K}}_{o}^n$. Then
\begin{equation}\label{FVO}
\widehat{V_{\varphi}}(K,L)\ge \left(\frac{V(L)}{V(K)}\right)^{1/n}.
\end{equation}
If $\varphi$ is strictly convex and $V(L)>0$, then equality holds if and only if $K$ and $L$ are dilatates.
\end{thm}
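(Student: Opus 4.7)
The plan is to reduce \eqref{FVO} directly to Lemma~\ref{Dan} via a rescaling of $L$. For any $\lambda>0$, applying Lemma~\ref{Dan} (with $a=\infty$) to $K$ and $L/\lambda\in\K^n_o$ yields
\[
\int_{S^{n-1}}\varphi\!\left(\frac{h_L(u)}{\lambda h_K(u)}\right) d\overline{V}_n(K,u)\ge \varphi\!\left(\frac{1}{\lambda}\left(\frac{V(L)}{V(K)}\right)^{1/n}\right),
\]
since $h_{L/\lambda}=h_L/\lambda$ and $V(L/\lambda)=V(L)/\lambda^n$. From convexity of $\varphi$ together with $\varphi(0)=0$ and $\varphi(1)=1$, one has $\varphi(s)\ge s$ for all $s\ge 1$ (because $1=(1/s)s+(1-1/s)\cdot 0$ gives $1\le \varphi(s)/s$), so $\varphi(s)\le 1$ forces $s\le 1$. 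Hence any $\lambda>0$ admissible in the infimum defining $\widehat{V_\varphi}(K,L)$ must satisfy $\lambda^{-1}(V(L)/V(K))^{1/n}\le 1$, and taking the infimum establishes \eqref{FVO}.

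For the equality condition, I would assume that $\varphi$ is strictly convex, $V(L)>0$, and equality holds in \eqref{FVO}. Since $K\in\K^n_{oo}$, the support function $h_K$ is bounded away from zero on $S^{n-1}$, so the integrand $\varphi(h_L(u)/(\lambda h_K(u)))$ is uniformly bounded in $u$ for $\lambda$ in any compact subset of $(0,\infty)$; dominated convergence then shows that the map $\lambda\mapsto \int_{S^{n-1}}\varphi(h_L/(\lambda h_K))\,d\overline{V}_n(K,\cdot)$ is continuous and decreasing on $(0,\infty)$, so the infimum defining $\widehat{V_\varphi}(K,L)$ is attained at $\lambda_0=(V(L)/V(K))^{1/n}$. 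Combining the upper bound $\int\le 1$ at $\lambda_0$ with the lower bound $\int\ge \varphi(1)=1$ coming from the rescaled Lemma~\ref{Dan}, equality must hold in Lemma~\ref{Dan} applied to $K$ and $L/\lambda_0$. Since $V(L/\lambda_0)>0$, its equality clause forces $L/\lambda_0$ to be a dilatate of $K$, whence $K$ and $L$ are dilatates. Conversely, if $L=rK$ for some $r>0$, the integrand in \eqref{FV} equals the constant $\varphi(r/\lambda)$, and using that $\overline{V}_n(K,\cdot)$ is a probability measure together with $\varphi(t)\ge t$ for $t\ge 1$ gives $\widehat{V_\varphi}(K,L)=r=(V(L)/V(K))^{1/n}$.

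Essentially all the substantive work is already packaged in Lemma~\ref{Dan}, which provides both the key Jensen/Minkowski estimate and its equality characterization. The only ancillary technicality is the continuity of the integral as a function of $\lambda$, used to attain the infimum so that the equality analysis can be run at $\lambda=\lambda_0$; this is routine given that $K$ has nonempty interior. I do not anticipate any genuine obstacle.
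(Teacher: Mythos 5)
Your proof is correct and follows essentially the same route as the paper's: both reduce \eqref{FVO} to Lemma~\ref{Dan} by a rescaling (the paper replaces $K$ by $\lambda K$, you replace $L$ by $L/\lambda$, which is equivalent) to obtain the lower bound $\varphi\bigl(\lambda^{-1}(V(L)/V(K))^{1/n}\bigr)$ for the integral, and then run the equality clause of that lemma at the optimal $\lambda_0$. The extra details you supply---the bound $\varphi(s)\ge s$ for $s\ge 1$, the attainment of the infimum via continuity and monotonicity in $\lambda$, and the converse direction for dilatates---merely make explicit steps the paper leaves implicit.
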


\begin{proof}
For any $\lambda>0$, Lemma~\ref{Dan}, with $a=\infty$ and $K$ replaced by $\lambda K$, shows that
\begin{equation}\label{Jennn}
\int_{S^{n-1}}\varphi\left(\frac{h_L(u)}{\lambda h_K(u)}\right)\,
d\overline{V}_n(K,u)\ge \varphi\left(\frac{1}{\lambda}\left(\frac{V(L)}{V(K)}\right)^{1/n}\right).
\end{equation}
Let $\lambda>0$ be such that
$$\int_{S^{n-1}}\varphi
\left(\frac{h_L(u)}{\lambda h_K(u)}\right)\,d\overline{V}_n(K,u)\le 1.$$
Using (\ref{Jennn}), we obtain
$$1\ge \varphi\left(\frac{1}{\lambda}\left(\frac{V(L)}{V(K)}\right)^{1/n}\right),$$
or, since $\varphi(1)=1$,
$$\lambda\ge \left(\frac{V(L)}{V(K)}\right)^{1/n}.$$
In view of (\ref{FV}), this yields (\ref{FVO}).

Suppose that equality holds in \eqref{FVO}, $\varphi$ is strictly convex, and $V(L)>0$.
By (\ref{FV}),  $\lambda_0= \widehat{V_{\varphi}}(K,L)>0$
satisfies
$$\int_{S^{n-1}}\varphi\left(\frac{h_L(u)}{\lambda_0 h_K(u)}\right)\,
d\overline{V}_n(K,u)=1.
$$
By assumption, we have
$$
\lambda_0=\left(\frac{V(L)}{V(K)}\right)^{1/n},
$$
that is,
$$1=\varphi\left(\frac {1}{\lambda_0}\left(\frac{V(L)}{V(K)}\right)^{1/n}\right).$$
Hence equality holds in \eqref{Jennn} for $\lambda=\lambda_0$. But then the equality condition in Lemma~\ref{Dan} shows that $\lambda_0K$ and $L$ are dilatates.
\end{proof}

When $\varphi(t)=t^p$, (\ref{FVO}) also becomes the $L_p$-Minkowski inequality (\ref{Lpvariation22}), so it too can be viewed as an Orlicz-Minkowski inequality.

There is no direct relationship between the Orlicz-Minkowski inequalities (\ref{NewMI}) and (\ref{FVO}).  Indeed, when $\varphi>0$ on $(0,\infty)$, these can be written in the forms
$$
\frac{V_\varphi(K,L)}{V(K)}\ge \varphi\left(\left(\frac{V(L)}{V(K)}\right)^{\frac{1}{n}}\right)
$$
and
$$
\varphi\left(\widehat{V}_\varphi(K,L)\right)\ge \varphi\left(\left(\frac{V(L)}{V(K)}\right)
^{\frac{1}{n}}\right),
$$
respectively, and each of the two quantities on the left-hand sides can be larger than the other. This can be seen by taking $n=2$, $K$ a rectangle
with side lengths $a$ and $b$, and $L=B^2$. It is easy to check that in this case
$$
\frac{V_\varphi(K,L)}{V(K)}=\frac{1}{2}\left(\varphi\left(\frac{2}{a}\right)+
\varphi\left(\frac{2}{b}\right)\right)
$$
and $\lambda_1=\widehat{V}_\varphi(K,L)$ is determined by the equation
$$
\frac{1}{2}\left(\varphi\left(\frac{2}{\lambda_1 a}\right)+\varphi\left(\frac{2}{\lambda_1 b}\right)\right)=1.
$$
Choosing $\varphi(t)=(e^t-1)/(e-1)$, it now follows that
$$
\varphi(\lambda_1)\ge \frac{V_\varphi(K,L)}{V(K)}=\frac{e^{2/a}+e^{2/b}-2}{2(e-1)}
$$
if and only if
$$
H(a,b)=g_{a,b}\left(\ln\left(\frac{e^{2/a}+e^{2/b}}{2}\right)\right)-2e\ge 0,
$$
where
$$
g_{a,b}(t)=e^{2/(at)}+e^{2/(bt)}.
$$
Numerical calculations show that, for example, $H(2,1)<0$ while $H(2,b)>0$ if $b$ is sufficiently large.

\begin{thm}\label{Wolfgang1}
If $K\in {\mathcal{K}}_{oo}^n$ and $L\in {\mathcal{K}}_{o}^n$ are such that $L\subset \inte K$, then
\begin{equation}\label{WO1}
\log\left(\frac{V(K)^{1/n}-V(L)^{1/n}}{V(K)^{1/n}}
\right)\ge \int_{S^{n-1}}\log\left(\frac{h_K(u)-h_L(u)}{h_K(u)}\right)\,
d\overline{V}_n(K,u).
\end{equation}
Equality holds if and only if $K$ and $L$ are dilatates or $L=\{o\}$.
\end{thm}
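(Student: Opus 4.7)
The plan is to apply Lemma~\ref{Dan} to the convex function $\varphi:[0,1)\to[0,\infty)$ defined by $\varphi(t)=-\log(1-t)$. First I would verify that this $\varphi$ meets the hypotheses of Lemma~\ref{Dan} with $a=1$ and $I=[0,1)$: clearly $\varphi(0)=0$, $\varphi$ is nonnegative on $[0,1)$, and $\varphi''(t)=1/(1-t)^2>0$ shows that $\varphi$ is strictly convex (and strictly increasing). The assumption $L\subset\inte K$ translates exactly into the hypothesis $L\subset\inte(aK)$ of Lemma~\ref{Dan} for $a=1$, and it also ensures that $h_L(u)/h_K(u)\in[0,1)$ for every $u\in S^{n-1}$, so the integrand is well defined.

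Next I would apply Lemma~\ref{Dan} to obtain
\begin{equation*}
\frac{1}{nV(K)}\int_{S^{n-1}}\left[-\log\!\left(1-\frac{h_L(u)}{h_K(u)}\right)\right]h_K(u)\,dS(K,u)\ge -\log\!\left(1-\left(\frac{V(L)}{V(K)}\right)^{1/n}\right).
\end{equation*}
Rewriting the logarithms as $-\log(1-h_L/h_K)=\log(h_K/(h_K-h_L))$ and analogously on the right, and then multiplying through by $-1$ (which reverses the inequality), yields
\begin{equation*}
\frac{1}{nV(K)}\int_{S^{n-1}}\log\!\left(\frac{h_K(u)-h_L(u)}{h_K(u)}\right)h_K(u)\,dS(K,u)\le \log\!\left(\frac{V(K)^{1/n}-V(L)^{1/n}}{V(K)^{1/n}}\right).
\end{equation*}
Recognizing the left-hand side as an integral against the normalized cone measure $d\overline{V}_n(K,u)=h_K(u)\,dS(K,u)/(nV(K))$ defined in \eqref{normconem} produces exactly the inequality \eqref{WO1}.

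For the equality characterization, since $\varphi(t)=-\log(1-t)$ is strictly convex on $[0,1)$, the equality clause of Lemma~\ref{Dan} gives that equality in \eqref{WO1} holds if and only if $K$ and $L$ are dilatates or $L=\{o\}$. I do not anticipate a substantive obstacle here: the work has been done in Lemma~\ref{Dan}, and the only real content is the felicitous choice of $\varphi$ together with the algebraic observation that $-\log(1-t)$ transforms ratios $h_L/h_K$ and $(V(L)/V(K))^{1/n}$ into the quantities $(h_K-h_L)/h_K$ and $(V(K)^{1/n}-V(L)^{1/n})/V(K)^{1/n}$ appearing in \eqref{WO1}. One minor point worth double-checking is the direction of the inequality after the sign change, and the fact that $L\subset\inte K$ (rather than merely $L\subset K$) is what keeps $h_L(u)/h_K(u)$ strictly less than $1$ for all $u$, so that $\log((h_K-h_L)/h_K)$ and the right-hand side are finite.
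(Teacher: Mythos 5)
Your proposal is correct and follows exactly the paper's argument: apply Lemma~\ref{Dan} with $a=1$ and the strictly convex function $\varphi(t)=-\log(1-t)$, then rewrite $-\log(1-t)$ as $\log(1/(1-t))$ to obtain \eqref{WO1} and read off the equality condition from that of Lemma~\ref{Dan}. The algebraic verification and the remark on why $L\subset\inte K$ keeps the ratios $h_L/h_K$ in $[0,1)$ are exactly the points the paper leaves implicit.
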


\begin{proof}
Suppose that $K\in {\mathcal{K}}_{oo}^n$ and $L\in {\mathcal{K}}_{o}^n$ are such that $L\subset \inte K$.  Let $\varphi(t)=-\log(1-t)$. Then $\varphi(0)=0$ and $\varphi$ is strictly increasing and strictly convex on $[0,1)$ with $\varphi(t)\to\infty$ as
$t\to 1-$, and (\ref{WO1}) is a direct consequence of Lemma~\ref{Dan} with this choice of $\varphi$ and $a=1$.
\end{proof}

\begin{cor}\label{corWolfgang1}
If $K\in {\mathcal{K}}_{oo}^n$ and $L\in {\mathcal{K}}_{o}^n$, then
\begin{equation}\label{cons1}
\log\left(\frac{V(K+L)^{1/n}-V(L)^{1/n}}{V(K+L)^{1/n}}\right)\ge \int_{S^{n-1}}\log\left(\frac{h_K(u)}{h_{K+L}(u)}\right)\,
d\overline{V}_n(K+L,u).
\end{equation}
Equality holds if and only if $K$ and $L$ are dilatates or $L=\{o\}$.
\end{cor}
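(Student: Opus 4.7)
The plan is to deduce Corollary~\ref{corWolfgang1} directly from Theorem~\ref{Wolfgang1} by applying that result with $K$ replaced by $K+L$ (and $L$ unchanged), then using the identity $h_{K+L}=h_K+h_L$ to rewrite the integrand. First I would verify the hypotheses. Since $K\in\K^n_{oo}$ and $o\in L$, we have $K\subset K+L$, so $o\in\inte K\subset\inte(K+L)$ and hence $K+L\in\K^n_{oo}$. To see that $L\subset\inte(K+L)$, choose $\delta>0$ with $\delta B^n\subset K$; then for each $\ell\in L$, $\ell+\delta B^n\subset L+K=K+L$, so $\ell\in\inte(K+L)$.

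With the hypotheses secured, Theorem~\ref{Wolfgang1} applied to $(K+L,L)$ yields
\begin{equation*}
\log\left(\frac{V(K+L)^{1/n}-V(L)^{1/n}}{V(K+L)^{1/n}}\right)\ge \int_{S^{n-1}}\log\left(\frac{h_{K+L}(u)-h_L(u)}{h_{K+L}(u)}\right) d\overline{V}_n(K+L,u).
\end{equation*}
Since $h_{K+L}-h_L=h_K$, the integrand on the right equals $\log(h_K(u)/h_{K+L}(u))$, which is exactly the right-hand side of \eqref{cons1}. This is the entire content of the inequality; the only real step is recognizing that the algebraic identity $h_{K+L}-h_L=h_K$ aligns the integrand of Theorem~\ref{Wolfgang1} with the one appearing in \eqref{cons1}.

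For the equality case, Theorem~\ref{Wolfgang1} gives equality if and only if $L=\{o\}$ or $K+L$ and $L$ are dilatates. In the first case both sides of \eqref{cons1} are zero. In the second case, $K+L=\lambda L$ for some $\lambda>0$, so $h_K=(\lambda-1)h_L$ on $S^{n-1}$; since $h_K>0$ (because $K\in\K^n_{oo}$), we must have $\lambda>1$ and $h_L>0$, giving $L\in\K^n_{oo}$ and $K=(\lambda-1)L$, so $K$ and $L$ are dilatates. Conversely, if $K=cL$ for some $c>0$, then $K+L=(c+1)L$ is a dilatate of $L$, so equality holds by Theorem~\ref{Wolfgang1}. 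The main (minor) obstacle is this translation of the equality condition from the pair $(K+L,L)$ back to the pair $(K,L)$; everything else is immediate once Theorem~\ref{Wolfgang1} is in hand.
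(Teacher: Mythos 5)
Your proposal is correct and follows exactly the paper's route: apply Theorem~\ref{Wolfgang1} with $K$ replaced by $K+L$, using $h_{K+L}-h_L=h_K$ to identify the integrands. You supply more detail than the paper (which just says the claim ``follows immediately''), in particular the verification that $L\subset\inte(K+L)$ and the correct translation of the equality condition from the pair $(K+L,L)$ back to $(K,L)$ via $h_K=(\lambda-1)h_L$; both of these checks are sound.
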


\begin{proof}
Since $K\in {\mathcal{K}}_{oo}^n$, we have $K+L\in {\mathcal{K}}_{oo}^n$ and $L\subset\inte(K+L)$.  Then (\ref{cons1}) follows immediately from Theorem~\ref{Wolfgang1} with $K$ replaced by $K+L$.
\end{proof}

For $K, L\in {\mathcal{K}}_{oo}^n$ and $0\le t\le 1$, define
$$
(1-t)\cdot K+_0 t\cdot L=\bigcap_{u\in S^{n-1}}\left\{x\in \R^n: x\cdot u\le
h_{K}(x)^{1-t}h_{L}(x)^{t}\right\}.
$$
B\"{o}r\"{o}czky, Lutwak, Yang, and Zhang \cite{BLYZ} conjecture that for $o$-symmetric convex bodies $K$ and $L$ in $\R^n$ and $0\le t\le 1$,
\begin{equation}\label{logBM}
V\left((1-t)\cdot K+_0 t\cdot L\right)\ge V(K)^{1-t}V(K)^t.
\end{equation}
They call (\ref{logBM}) the {\em log-Brunn-Minkowski inequality} and note that while it is not true for general convex bodies, it implies the classical Brunn-Minkowski inequality for $o$-symmetric convex bodies.  In \cite{BLYZ}, (\ref{logBM}) is proved when $n=2$, and it is also shown that for all $n$,  (\ref{logBM}) is equivalent to the {\em log-Minkowski inequality}
\begin{equation}\label{logMink}
\int_{S^{n-1}}\log\left(\frac{h_L(v)}{h_K(v)}\right)\,d\overline{V}_n(K,v)\ge \frac{1}{n}\log\left(\frac{V(L)}{V(K)}\right).
\end{equation}
Notice that this is just (\ref{Jen}) with the concave function $\varphi(t)=\log t$.  When $K$ and $L$ are replaced by $K+L$ and $K$, respectively, (\ref{logMink}) becomes
\begin{equation}\label{cons2}
\int_{S^{n-1}}\log\left(\frac{h_K(u)}{h_{K+L}(u)}\right)\,
d\overline{V}_n(K+L,u)\ge
\log\left(\left(\frac{V(K)}{V(K+L)}\right)^{\frac{1}{n}}\right).
\end{equation}
Combining \eqref{cons1} and \eqref{cons2}, we get
$$
V(K+L)^{1/n}-V(L)^{1/n}\ge V(K)^{1/n},
$$
whenever $K\in\mathcal{K}^n_{oo}$ and $L\in {\mathcal{K}}_{o}^n$ and (\ref{logMink}) holds with $K$ and $L$ replaced by $K+L$ and $K$, respectively.  In particular, if \eqref{logMink} holds (as it does, for $o$-symmetric convex bodies when $n=2$), then (\ref{cons1}) and \eqref{logMink} together split the classical Brunn-Minkowski inequality.

\bigskip

\end{document}